\newcommand{\comments}[1]{}
\newtheorem{theo}{Theorem}[section]
\newtheorem{lemma}{Lemma}[section]
\newtheorem{corollary}{Corollary}[section]
\newtheorem*{remark}{Remark}
\newtheorem{remarkk}{Remark}
\numberwithin{equation}{section}
\def \isnatural {\in\mathbb{N}}
\def \iscomplex {\in\mathbb{C}}
\def \nhd {neighbourhood}
\def \usc {upper semicontinuous}
\def \lsc {lower semicontinuous}
\def \spw {spider's web}
\def \spws {spiders' webs}
\def\rom{\textup}
\newcommand{\intb} {\partial_{\text{\rmfamily\rom{int}\normalfont}}}
\newcommand{\innb} {\partial_{\text{\rmfamily\rom{inn}\normalfont}}}
\newcommand{\outb} {\partial_{\text{\rmfamily\rom{out}\normalfont}}}
\newcommand{\tef}{transcendental entire function}
\newcommand\qfor{\quad\text{for }}
\begin{document}
%
%
\title[On fundamental loops and the fast escaping set]{On fundamental loops and the fast escaping set}
\author{D. J. Sixsmith}
\address{Department of Mathematics and Statistics \\
	 The Open University \\
   Walton Hall\\
   Milton Keynes MK7 6AA\\
   UK}
\email{david.sixsmith@open.ac.uk}
%
%
\begin{abstract}
%
The fast escaping set, $A(f)$, of a {\tef} $f$ has begun to play a key role in transcendental dynamics. In many cases $A(f)$ has the structure of a {\spw}, which contains a sequence of fundamental loops. We investigate the structure of these fundamental loops for functions with a multiply connected Fatou component, and show that there exist {\tef}s for which some fundamental loops are analytic curves and approximately circles, while others are geometrically highly distorted. We do this by introducing a real-valued function which measures the rate of escape of points in $A(f)$, and show that this function has a number of interesting properties.
\end{abstract}
\maketitle
%
%
%
\section{Introduction}
Suppose that $f:\mathbb{C}\rightarrow\mathbb{C}$ is a {\tef}. The \itshape Fatou set \normalfont $F(f)$ is defined as the set of points $z\iscomplex$ such that $(f^n)_{n\isnatural}$ is a normal family in a {\nhd} of $z$. The \itshape Julia set \normalfont $J(f)$ is the complement in $\mathbb{C}$ of $F(f)$. An introduction to the properties of these sets was given in \cite{MR1216719}.

For a general {\tef} the \itshape escaping set \normalfont $$I(f) = \{z : f^n(z)\rightarrow\infty\text{ as }n\rightarrow\infty\}$$ was studied first in \cite{MR1102727}. This paper concerns a subset of the escaping set, called the \itshape fast escaping set \normalfont $A(f)$. This was introduced in \cite{MR1684251}, and can be defined \cite{Rippon01102012} by
\begin{equation}
\label{Adef}
A(f) = \{z : \text{there exists } \ell \isnatural \text{ such that } |f^{n+\ell}(z)| \geq M^n(R,f), \text{ for } n \isnatural\}.
\end{equation}
Here, the \itshape maximum modulus function \normalfont $M(r,f) = \max_{|z|=r} |f(z)|,$ for $r \geq 0,$ $M^n(r,f)$ denotes repeated iteration of $M(r,f)$ with respect to the variable $r$, and $R > 0$ is such that $M^n(R,f)\rightarrow\infty$ as $n\rightarrow\infty$. For simplicity, we only write down this restriction on $R$ in formal statements of results -- elsewhere this should be assumed to be true. We write $M(r)$ when it is clear which function is being referred to.

In \cite{Rippon01102012} several results on $A(f)$ were proved by considering the closed sets
\begin{equation*}
A_R(f) = \{z : |f^n(z)| \geq M^n(R), \ n\isnatural\},
\end{equation*}
where $R > 0$ is such that $M^n(R)\rightarrow\infty$ as $n\rightarrow\infty$. Rippon and Stallard \cite{Rippon01102012} used properties of $A_R(f)$ and $A(f)$ to develop new results relating to Eremenko's conjecture \cite{MR1102727} that $I(f)$ contains no bounded components. In addition they introduced the concept of a \itshape {\spw}\normalfont. A set $E$ is a {\spw} if $E$ is connected and there exists a sequence of bounded simply connected domains $(G_n)_{n\isnatural}$ such that $$\partial G_n \subset E, \ G_n \subset G_{n+1}, \text{ for } n\isnatural, \text{ and } \bigcup_{n\isnatural}~G_n~=~\mathbb{C}.$$ 

A {\tef} for which $A_R(f)$ is a {\spw} has very strong dynamical properties -- for example, it is shown in \cite{Rippon01102012} that $A(f)$ and $I(f)$ are also {\spws}, Eremenko's conjecture holds and all components of the Fatou set are bounded. There are many large classes of {\tef}s for which it is known that $A_R(f)$ is a {\spw}; see \cite{Rippon01102012}, \cite{MR2917092} and \cite{MR2838342} for examples.

To understand the structure of $A_R(f)$ {\spws}, Rippon and Stallard \cite{Rippon01102012} introduced \itshape fundamental holes \normalfont and \itshape fundamental loops\normalfont. When $A_R(f)$ is a {\spw}, we define the fundamental hole $H_R$ as the component of $A_R(f)^c$ that contains the origin, and the fundamental loop $L_R$ by $L_R = \partial H_R$. Since $A_R(f)$ is closed, we have that $L_R \subset A_R(f)$.

Our notation here differs slightly from that in \cite{Rippon01102012}. For $R>0$ fixed, Rippon and Stallard define sets $$A_R^m(f) = \{z : |f^n(z)| \geq M^{n+m}(R), \ n\isnatural, \ n+m\geq 0\},\qfor m\geq 0,$$ and define the sequence of fundamental holes to be the components of $A_R^m(f)^c$ that contain the origin. Denoting this sequence by $(H_m')_{m\geq 0}$, we observe that these notations are related by the equation $$H_{M^m(R)} = {H_m'}, \qfor m\geq 0.$$ 

It was shown in \cite[Theorem 1.9 (a)]{Rippon01102012} that $A_R(f)$ is a {\spw} whenever $f$ is a {\tef} with a multiply connected Fatou component. 
We now give the first results on the properties of fundamental loops in this case. The first of these gives information on the location of some fundamental loops. We say that a set $U$ surrounds a set $V$ if and only if $V$ is contained in a bounded component of $\mathbb{C}\backslash U$. We also write \rmfamily dist\normalfont$(z, U) = \inf_{w\in U} |z-w|$.
\begin{theo}
\label{Tloops}
Suppose that $f$ is a {\tef}. Then there exists $R'=R'(f)>0$ such that the following holds. If $U$ is a multiply connected Fatou component of $f$, such that $U$ surrounds the origin and \rmfamily\upshape dist\itshape$(0, U) > R'$, then there exist $0 < R_1 < R_2$ such that
\begin{enumerate}[(a)]
\item $L_{R_1} ={\intb} U$; 
\item $L_{R_2} =\outb U$; 
\item if $L_R$ is a fundamental loop such that $L_R \cap U \ne \emptyset$, then $L_R\subset U$. Moreover, this condition occurs if and only if $R_1 < R < R_2$.
\end{enumerate}
\end{theo}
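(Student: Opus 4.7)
The plan is to combine two core facts about $A(f)$. First, for a multiply connected Fatou component $U$ of a \tef{} $f$, the closure $\overline{U}$ lies in $A(f)$; more precisely, there exists $R^* > 0$ with $\overline{U} \subset A_{R^*}(f)$, and the rate of escape of points in $\overline{U}$ increases as one moves from $\intb U$ outward to $\outb U$. Second, the family of fundamental holes $\{H_R\}$ is nondecreasing in $R$, because $A_R(f) \subseteq A_{R'}(f)$ whenever $R \geq R'$, so $H_R^c \subseteq H_{R'}^c$, and hence $H_R \supseteq H_{R'}$.

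Let $I$ and $O$ denote the bounded and unbounded components of $\mathbb{C}\setminus U$ respectively, so $\partial I = \intb U$ and $\partial O = \outb U$. I would define
\[
R_1 = \sup\{R : H_R \subseteq I\}, \qquad R_2 = \sup\{R : H_R \subseteq \mathbb{C}\setminus \overline{O}\},
\]
observing that the hypothesis $\mathrm{dist}(0,U) > R'$ (for $R'$ chosen large enough that the relevant $M^n$-comparisons across $\overline{U}$ are uniform and $A_R(f)$ is a \spw{} throughout the relevant range) ensures that both suprema are attained and that $R_1 < R_2$.

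The central claim is that $L_{R_1} = \intb U$. By monotonicity we have $H_{R_1} \subseteq I$, so $L_{R_1}$ is a closed curve in $\overline{I}$ with $L_{R_1} \subseteq A_{R_1}(f)$ separating $0$ from $\infty$. For any $R$ slightly larger than $R_1$, $H_R$ extends beyond $\overline{I}$, which forces $H_{R_1}$ to be $I$ exactly and $L_{R_1} = \partial I = \intb U$. Confirming that $H_{R_1}$ fills all of $I$ rather than a proper subdomain uses that $\intb U \subseteq \overline{U} \subseteq A_{R^*}(f)$ with $R^* \geq R_1$, so the closure $\overline{H_{R_1}}$ of an increasing union of holes touches $\intb U$, while the assumption $\mathrm{dist}(0,U) > R'$ rules out any competing inner boundary curve lying in $A_{R_1}(f)$. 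Part (b) is entirely analogous, applied to the bounded simply connected domain $\mathbb{C}\setminus\overline{O}$, giving $L_{R_2} = \outb U$.

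For part (c), if $L_R$ is a fundamental loop with $L_R \cap U \neq \emptyset$, then $H_R$ strictly contains $I$ but is strictly contained in $\mathbb{C}\setminus\overline{O}$, and by the monotonicity and the identifications in (a) and (b) this occurs exactly when $R_1 < R < R_2$. Conversely, for such $R$ the connectedness of $L_R$ combined with the fact that $\intb U \not\subseteq A_R(f)$ for $R > R_1$ and $\outb U \not\subseteq A_R(f)$ for $R > R_2$ prevents $L_R$ from crossing either boundary of $U$, forcing $L_R \subseteq U$. The most delicate step will be the identification $H_{R_1} = I$: ruling out the possibility that inner multiply connected Fatou components or other inner components of $A_{R_1}(f)^c$ obstruct the fundamental hole's boundary from coinciding with $\intb U$ is exactly where the assumption $\mathrm{dist}(0,U) > R'$ is essential, since it forces $U$ into the regime where its escape rate strictly dominates that of everything in $I$.
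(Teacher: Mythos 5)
Your high-level picture (monotone fundamental holes, the hole at $R_1$ filling the inner complementary component, loops strictly between the two critical levels lying in $U$) is the same as the paper's, but the proposal omits exactly the steps that carry the proof. First, the identification $H_{R_1}=G_0$, where $G_0$ is the complementary component of $\overline{U}$ containing the origin, amounts to showing $G_0\subset A_{R_1}(f)^c$, and your justification --- that $\mathrm{dist}(0,U)>R'$ ``rules out any competing inner boundary curve lying in $A_{R_1}(f)$'' --- merely restates the claim. In the paper this is Lemma~\ref{LGnotin}, and it needs genuine input: Lemma~\ref{Lnhd} gives the nesting $\overline{G_n}\subset G_{n+1}$, $f(\partial G_n)=\partial G_{n+1}$, so that $(f,G_0,G_1)$ is polynomial-like; the Blaschke-product argument of Theorem~\ref{TBlaschke} then shows every orbit starting in $G_0$ is attracted, uniformly in the hyperbolic metric of $G_n$, to a fixed point, hence eventually has modulus less than $\tfrac12\,\mathrm{dist}(0,\intb U_n)$; finally Lemma~\ref{Lepoints} places a point of $A_{r_N/2}(f)$ in an annulus between that orbit and $\intb U_N$, and since $R_A\equiv R_1$ on $\intb U$ forces $\intb U_N\cap A_{r_N/2}(f)=\emptyset$, one contradicts the fact that $A_R(f)$ has no bounded components. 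Nothing in your outline substitutes for this chain. Moreover, the ``core fact'' you assume at the outset --- that the escape rate is constant on the inner boundary, at least the supremum on $\outb U$, and has no interior extrema --- is not off-the-shelf: it is Lemmas~\ref{Llimitedcontandlevel} and~\ref{LRAniceinmconn} of the paper, which rest on the Bergweiler--Rippon--Stallard structure theorems and a hyperbolic-metric continuity argument for $R_A$.

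Second, in part (c) your argument that $L_R$ cannot cross $\partial U$ relies on the assertion that $\outb U\not\subseteq A_R(f)$, which is false in the relevant range: for $R_1<R<R_2$ we have $R_A\geq R_2>R$ on $\outb U$, so $\outb U\subset A_R(f)$; and in any case membership in $A_R(f)$ is no obstruction, since $L_R$ itself lies in $A_R(f)$. The real danger is that the hole $H_R$ could accumulate on $\outb U$ through points of $U$ where $R_A<R$; upper semicontinuity of $R_A$ handles $\partial U\backslash\outb U$, but $R_A$ is not lower semicontinuous at $\outb U$, so connectedness of $L_R$ alone cannot exclude $L_R\cap\outb U\neq\emptyset$. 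The paper rules this out quantitatively: if $L_{M^N(R)}$ met $\outb U_N$ it would surround points of $U_N$ at all radii up to $r_N^{b_N}$, and then Lemmas~\ref{L1.3}, \ref{L5.1}, \ref{LMtheo} and~\ref{Lepoints} show that a point of $H_{M^N(R)}\cap U_N$ of modulus $r_N^{c_N}$ already lies in $A_\rho(f)$ with $\rho>M^N(R)$, a contradiction. (A smaller point: $\mathbb{C}\backslash U$ may have many bounded components, so ``$\partial I=\intb U$'' needs the paper's definition of $\intb U$ via $\mathbb{C}\backslash\overline{U}$.) As it stands, then, the proposal is a correct statement-level strategy, but the two decisive facts --- $G_0\cap A_{R_1}(f)=\emptyset$ and $L_R\cap\outb U=\emptyset$ --- are asserted rather than proved.
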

Here $\outb U$ is defined as the boundary of the unbounded component of $\mathbb{C}\backslash U$, and $\intb U$ is defined as the boundary of the component of $\mathbb{C}\backslash\overline{U}$ that contains the origin. The related set $\innb U$ is defined in \cite{2011arXiv1109.1794B} as the boundary of the component of $\mathbb{C}\backslash U$ that contains the origin.

In general, if $U$ is a Fatou component, we write $U_n$, $n\geq 0$, for the Fatou component containing $f^n(U)$. Note that, by Lemma \ref{Lbaker} below, if $V$ is a multiply connected Fatou component then there is an $N\isnatural$ such that, for $n\geq N$, $V_n$ is a multiply connected Fatou component which satisfies the hypotheses of Theorem~\ref{Tloops}.

%
%
Using Theorem~\ref{Tloops} we prove the following result.
\begin{theo}
\label{TloopsinJF}
Suppose that $f$ is a {\tef} and that $L_R$ is a fundamental loop of $f$. Then either $L_R \subset F(f)$ or $L_R \subset J(f)$.
\end{theo}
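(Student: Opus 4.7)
The plan is to prove the equivalent implication: if $L_R \cap F(f) \ne \emptyset$, then $L_R \subset F(f)$. The stated dichotomy then follows, since otherwise $L_R \cap F(f) = \emptyset$ forces $L_R \subset J(f)$.

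The first step is to establish the containment $f(L_R) \subset L_{M(R)}$, from which $f^n(L_R) \subset L_{M^n(R)}$ follows by induction. To prove this, for any $w \in H_R$ the definition of $A_R(f)^c$ gives some $n$ with $|f^n(w)| < M^n(R)$, and a straightforward shift then yields $f(w) \in A_{M(R)}(f)^c$. Thus $f(H_R)$ is a connected subset of $A_{M(R)}(f)^c$; it contains $f(0)$, and for $R$ sufficiently large $f(0) \in H_{M(R)}$, so $f(H_R) \subset H_{M(R)}$. Taking closures and using $f(L_R) \subset A_{M(R)}(f)$, we conclude $f(L_R) \subset \overline{H_{M(R)}} \cap A_{M(R)}(f) = L_{M(R)}$.

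Now suppose $z \in L_R \cap F(f)$ and let $U$ be its Fatou component. Since $z \in I(f)$ we have $U \subset I(f)$, and since $A_R(f)$ is a {\spw} (implicit from the existence of $L_R$) every Fatou component is bounded, so $U$ is a bounded wandering domain contained in $A(f)$. The main technical obstacle is to conclude that $U$ is \emph{multiply connected}, for which I would appeal to existing classification results for Fatou components lying in $A(f)$ due to Rippon and Stallard. Granted this, Lemma~\ref{Lbaker} yields $N$ such that $U_n$ satisfies the hypotheses of Theorem~\ref{Tloops} for $n \geq N$. Since $f^n(z) \in L_{M^n(R)} \cap U_n$, part (c) of Theorem~\ref{Tloops} forces $L_{M^n(R)} \subset U_n$, so $f^n(L_R) \subset L_{M^n(R)} \subset U_n$ and therefore $L_R \subset f^{-n}(U_n)$. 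The latter is a disjoint union of Fatou components, and since $L_R$ is connected and contains the point $z \in U$, we conclude $L_R \subset U \subset F(f)$.
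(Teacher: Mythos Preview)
Your overall architecture matches the paper's: iterate forward so that $f^N(L_R)=L_{M^N(R)}$ meets a multiply connected Fatou component far from the origin, apply Theorem~\ref{Tloops}(c) to get $L_{M^N(R)}\subset U_N$, and pull back. The pullback via $f^{-N}(U_N)\subset F(f)$ is exactly how the paper concludes. Two points deserve comment.

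First, the step you flag as the ``main technical obstacle'' is indeed the heart of the matter, but your appeal to a classification result is not quite right. There is no theorem saying that a fast escaping Fatou component must be multiply connected; the paper itself notes (final remarks of Section~\ref{SRA}) that simply connected fast escaping Fatou components exist. What the paper does instead is rule out \emph{this particular configuration}: if $U$ were simply connected and $z\in L_R\cap U\subset A_R(f)\cap U$, then \cite[Theorem~1.2(b)]{Rippon01102012} gives $\overline{U}\subset A_R(f)$. But $z\in L_R=\partial H_R$ and $U$ is open, so $U$ contains points of $H_R\subset A_R(f)^c$ --- a contradiction. This is a two-line argument, not a classification, and it is precisely the missing ingredient in your sketch.

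Second, a minor point: the relation $f(L_R)=L_{M(R)}$ is quoted in the paper from \cite[Lemma~7.2(c)]{Rippon01102012} rather than re-derived. Your derivation of the containment $f(H_R)\subset H_{M(R)}$ has a small hole: you need $f(0)$ and $0$ to lie in the same component of $A_{M(R)}(f)^c$, and ``for $R$ sufficiently large'' does not cover all $R$ for which $L_R$ is defined. The cited lemma handles this uniformly.
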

A second consequence of Theorem~\ref{Tloops} is that when a fundamental loop lies within a multiply connected Fatou component, $U$, it is often possible to say more about the nature of this set. In fact, there is a close relationship between some fundamental loops of $f$ and some level sets of the non-constant positive harmonic function $h$ that was introduced by Bergweiler, Rippon and Stallard in \cite[Theorem 1.2]{2011arXiv1109.1794B}, and used to prove many geometric properties of multiply connected Fatou components. The function $h$ is defined by
\begin{equation}
\label{hdefinU}
h(z) = \lim_{n\rightarrow\infty} \frac{\log|f^n(z)|}{\log|f^n(z_0)|}, \qfor z\in U, \text{ some } z_0 \in U.
\end{equation}

Our result is as follows.
\begin{theo}
\label{Tloopsandh}
Suppose that $f$ is a {\tef}. Then there exists $R'=R'(f)>0$ such that the following holds. If $U$ is a multiply connected Fatou component of $f$, such that $U$ surrounds the origin, \rmfamily\upshape dist\itshape$(0, U) > R'$, and $h$ is as defined as in (\ref{hdefinU}), then
\begin{enumerate}[(a)]
\item if $L_R\subset U$ is a fundamental loop, then $h(z)$ is constant on $L_R$ and so $L_R$ is an analytic Jordan curve; 
\item if $\Gamma$ is a level set of $h$, then $\Gamma$ has a component $\gamma$ which surrounds the origin and there is a fundamental loop $L_R$ such that $L_R\subset\gamma$.
\end{enumerate}
\end{theo}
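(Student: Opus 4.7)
The plan is to establish a sharp dictionary between the fundamental loops $L_R$ and the level sets $\{h = c\}$ by means of a single key asymptotic. For part (a), I would fix $R$ with $L_R \subset U$ and aim to prove that for every $z \in L_R$,
\begin{equation*}
\lim_{n\to\infty} \frac{\log|f^n(z)|}{\log M^n(R)} = 1.
\end{equation*}
The lower bound $\geq 1$ is a direct consequence of $L_R \subset A_R(f)$ and (\ref{Adef}). For the upper bound, I would exploit that $z \in L_R = \partial H_R$ is a limit of points $z_k \in H_R$, and for each such $z_k$ there exists $n_k \isnatural$ with $|f^{n_k}(z_k)| < M^{n_k}(R)$. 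A propagation observation --- namely that if $|f^{n_0}(z)| = M^{n_0}(R)$ then $|f^n(z)| = M^n(R)$ for all $n \geq n_0$ --- reduces matters to the case where $|f^n(z)| > M^n(R)$ strictly for every $n$, in which case continuity forces $n_k \to \infty$. Combining $|f^{n_k}(z_k)| < M^{n_k}(R) \leq |f^{n_k}(z)|$ with the uniform convergence of $\log|f^n(\cdot)|/\log|f^n(z_0)|$ to $h$ on compact subsets of $U$ from \cite{2011arXiv1109.1794B} should then pin down the value of $h(z)$ and yield the required upper bound.

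Granting the asymptotic, part (a) follows quickly. The natural next step is to choose $z_0 \in L_R$ in the definition (\ref{hdefinU}) so that $h(z_0) = 1$; then for any $z \in L_R$,
\begin{equation*}
h(z) = \lim_{n\to\infty} \frac{\log|f^n(z)|}{\log|f^n(z_0)|} = \lim_{n\to\infty} \frac{\log|f^n(z)|/\log M^n(R)}{\log|f^n(z_0)|/\log M^n(R)} = 1,
\end{equation*}
so $h$ is constant on $L_R$. Since $L_R = \partial H_R$ is connected and surrounds the origin, and the structural results of \cite{2011arXiv1109.1794B} identify components of level sets of $h$ in $U$ surrounding the origin as analytic Jordan curves, $L_R$ coincides with such a component and is therefore an analytic Jordan curve.

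For part (b), given a level set $\Gamma = \{h = c\}$, the existence of a component $\gamma$ surrounding the origin is part of the BRS structural results in \cite{2011arXiv1109.1794B}. To exhibit a fundamental loop $L_R \subset \gamma$, I would use part (a) as a parametrisation: each fundamental loop $L_R$ with $R_1 < R < R_2$ (in the notation of Theorem~\ref{Tloops}) lies in a level set of $h$ at some value $c_R$ which varies continuously and monotonically with $R$, because the fundamental holes $H_R$ are monotonically nested and $h$ is continuous on $U$. As $R$ traverses $(R_1, R_2)$, $L_R$ sweeps between $\intb U$ and $\outb U$, so $c_R$ covers the full range of $h|_U$; an intermediate value argument then produces the required $R$ with $c_R = c$, and the resulting Jordan curve $L_R$ at level $c$ is forced to lie in the unique component $\gamma$ of $\{h = c\}$ surrounding the origin. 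The main obstacle throughout will be establishing the upper bound in the key asymptotic for part (a), which requires a careful combination of the topological description of $H_R$ and the growth estimates in \cite{2011arXiv1109.1794B}.
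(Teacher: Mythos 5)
For part (a), the squeeze you set up (approximating $z\in L_R=\partial H_R$ by points $z_k\in H_R\cap U$ and combining $|f^{n_k}(z_k)|<M^{n_k}(R)\leq|f^{n_k}(z)|$ with the locally uniform convergence defining $h$) is sound, but it only yields $\log|f^{n_k}(z)|/\log M^{n_k}(R)\to 1$ along the particular subsequence $(n_k)$ produced for the point $z$; nothing in your sketch controls $\log M^{n}(R)/\log|f^{n}(z_0)|$ for the remaining $n$, so the full limit you assert, and the later step in which you divide two limits taken over all $n$ (with subsequences that differ from point to point), is not justified as written. The repair is cheap and uses only your ingredients: given $w,z\in L_R$, run your argument at $w$ to obtain a subsequence along which $\log M^{n}(R)/\log|f^{n}(z_0)|\to h(w)$, then use $|f^{n}(z)|\geq M^{n}(R)$ for all $n$ to get $h(z)\geq h(w)$, and finish by symmetry. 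This is a genuinely different route from the paper, which obtains (a) in one line from the facts that $R_A$ is continuous in $U$, equals $R$ on $L_R$ (proof of Theorem~\ref{Tloops}(c)), and equals $\phi(h)$ with $\phi$ strictly increasing and continuous (Lemma~\ref{Llimitedcontandlevel}); the full asymptotic you aim for is essentially the paper's Theorem~\ref{limittheo}.

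Part (b) is where the genuine gaps lie. Your argument hinges on (i) continuity (and monotonicity) of $R\mapsto c_R$, the $h$-value of $L_R$, on $(R_1,R_2)$, and (ii) the claim that $c_R$ sweeps out the whole range of $h|_U$; neither is proved. Nestedness of the holes $H_R$ together with continuity of $h$ does not prevent the loops from jumping as $R$ varies, and ruling such jumps out is exactly the content of the lower semicontinuity of $R_A$, which the paper proves with a hyperbolic-metric estimate in Lemma~\ref{Llimitedcontandlevel}; similarly (ii) amounts to the statement that the loop levels fill out $(R_1,R_2)$ and correspond to $h$-levels through a strictly increasing continuous bijection, i.e.\ precisely Lemmas~\ref{Llimitedcontandlevel} and~\ref{LRAniceinmconn}. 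So the intermediate value step conceals the substance of the theorem rather than proving it. Moreover, the existence of a component $\gamma$ of $\Gamma$ surrounding the origin is not an input from \cite{2011arXiv1109.1794B}: in the paper it is a conclusion --- one shows $\Gamma$ is a level set of $R_A$ at some value $R$ with $R_1<R<R_2$ (Lemma~\ref{LRAniceinmconn}(c)), Theorem~\ref{Tloops}(c) then supplies a loop $L_R\subset U$, and since $R_A\equiv R$ on $L_R$ and level sets of $R_A$ and $h$ in $U$ coincide, $L_R\subset\Gamma$; as $L_R$ surrounds the origin, the component of $\Gamma$ containing it is the required $\gamma$. To salvage your route for (b) you would have to prove (i) and (ii) directly, which in effect reconstructs the function $\phi$, or else argue as the paper does via $R_A$.
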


It follows from these results that the fundamental loops of a {\tef} can have very varied geometrical properties. For example, consider the {\tef} $f$ given in  \cite[Example 3]{2011arXiv1109.1794B}. This has a multiply connected Fatou component $U$ with the property that $$\lim_{n\rightarrow\infty} \frac{\max \{\log |z| : {z\in\outb U_n}\}}{\min \{\log |z| : {z\in\outb U_n}\}} = \infty.$$ 
By Theorem~\ref{Tloops}, there is a fundamental loop of $f$ which coincides with $\outb U_n$, and so is far from circular for large values of $n$. However, there are also fundamental loops of $f$ which lie inside $U_n$, for each $n\isnatural$. By Theorem~\ref{Tloopsandh}(a) these are analytic Jordan curves, and by \cite[Theorem 7.1]{2011arXiv1109.1794B} can be approximately circular.

%
%
%
A key tool in the proofs of these theorems is a function $R_A$, defined in (\ref{RAdef}) below, which for a point $z$ is the largest $R$ such that $z \in A_R(f)$.  In general this function can only be defined in a subset of $A(f)$. In Section~\ref{SRA} we show that, subject to a certain normalisation, this definition can in fact be extended in a natural way to the whole complex plane. We show that, in this case, there is an alternative characterisation of $A(f)$. We also show that the function $R_A$ has a number of interesting properties. \\

%
%
The structure of this paper is as follows. First, in Section~\ref{Sbackground}, we state a number of results required in the proof of our main theorems. With the exception of Lemma~\ref{mconnlemma}, these are all known results. In Section~\ref{Sblaschke} we prove a new result, which states that if a {\tef} has a certain property with respect to a nested sequence of bounded simply connected domains, then there is a fixed point which has a certain `attracting' property. This may be of independent interest. In Section~\ref{SRAinMC} we show that the function $R_A$ can be defined in certain multiply connected Fatou components, and prove several preparatory lemmas. In Section~\ref{Sloops} we prove Theorems~\ref{Tloops}, \ref{TloopsinJF} and \ref{Tloopsandh}. Finally, in Section~\ref{SRA} we state and prove several results regarding the case when $R_A$ can be defined in the whole complex plane.
%
%
%
%
\section{Background material}
\label{Sbackground}
We use the following notation for an annulus and a disc $$A(r_1, r_2) = \{ z : r_1 < |z| < r_2 \}, \qfor 0 < r_1 < r_2, $$ $$B(\zeta, \ r) = \{ z : |z-\zeta| < r \}, \qfor 0 < r.$$

%
%
We require the following two well known facts about the maximum modulus of a {\tef}:
\begin{equation}
\label{Meq0}
\log M(e^t) \text{ is a convex and increasing function of } t,
\end{equation}
and
\begin{equation}
\label{Meq1}
\frac{\log M(r)}{\log r} \rightarrow\infty \text{ as } r\rightarrow\infty.
\end{equation}

We often use the following lemma \cite[Theorem 2.2]{2011arXiv1109.1794B}, generally with $n=1$.
\begin{lemma}
\label{LMtheo}
Let $f$ be a {\tef}. Then there exists $R_0~=~R_0(f)~>~0$ such that, for all $0 < c' < 1 < c$, and all $n\isnatural$, 
\begin{equation}
\label{Mbigeq}
M(r^c, f^n) \geq M(r, f^n)^c, \qfor r>R_0,
\end{equation}
and
\begin{equation}
\label{Msmalleq}
M(r^{c'}, f^n) \leq M(r, f^n)^{c'}, \qfor r>R_0^{1/c'}.
\end{equation}
\end{lemma}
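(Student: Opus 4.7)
The plan is to first establish the case $n=1$ using only (\ref{Meq0}) and (\ref{Meq1}), and then to extend to general $n$, either by running the same argument on the {\tef} $f^n$ directly or by iterating the $n=1$ estimate along the maximum--modulus trajectory $r, M(r,f), M^2(r,f), \ldots$.

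For the case $n=1$, I would introduce $\phi(t) = \log M(e^t, f)$, which by (\ref{Meq0}) is convex and increasing on $\mathbb{R}$ and by (\ref{Meq1}) satisfies $\phi(t)/t \to \infty$ as $t \to \infty$. After the substitution $r = e^t$, (\ref{Mbigeq}) becomes $\phi(ct) \geq c\phi(t)$ and (\ref{Msmalleq}) becomes $\phi(c't) \leq c'\phi(t)$. Setting $g(t) = \phi(t)/t$, both of these amount to monotonicity of $g$: on any half-line $[t_0, \infty)$ where $g$ is non-decreasing, we have $g(ct) \geq g(t)$ for $c > 1$, which is (\ref{Mbigeq}), and also $g(c't) \leq g(t)$ for $c' < 1$ provided $c' t \geq t_0$, which corresponds to $r \geq R_0^{1/c'}$ with $R_0 = e^{t_0}$.

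The heart of the matter is then to show that $g$ is eventually non-decreasing. Writing $g'(t) = G(t)/t^2$ with $G(t) = t\phi'(t) - \phi(t)$, convexity of $\phi$ gives $G'(t) = t\phi''(t) \geq 0$, so $G$ is non-decreasing. The convex tangent-line inequality $\phi'(t) \geq (\phi(t) - \phi(0))/t$ together with (\ref{Meq1}) forces $\phi'(t) \to \infty$, and consequently $G(t) = G(1) + \int_1^t s\phi''(s)\,ds \geq G(1) + \int_1^t \phi''(s)\,ds \to \infty$. Hence there is $t_0$ with $G(t) \geq 0$ for $t \geq t_0$, so $g$ is non-decreasing on $[t_0, \infty)$, which supplies (\ref{Mbigeq}) and (\ref{Msmalleq}) for $n=1$ with $R_0 = e^{t_0}$.

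To extend to $n \geq 2$ I would enlarge $R_0$ so that additionally $M(r,f) > r$ for $r > R_0$, which is possible by (\ref{Meq1}); then any iterate stays above the threshold. One option is to apply the $n=1$ argument directly to the {\tef} $f^n$, whose $\log M(e^t, f^n)$ is again convex, increasing and super-linear. An alternative is a direct induction that feeds the $n=1$ estimate through the composition $f^n = f \circ f^{n-1}$, using that $M^k(r^c, f) \geq M^k(r, f)^c$ propagates under iteration once the trajectory is above $R_0$. I expect the main obstacle to be verifying that the threshold can be chosen \emph{uniformly} in $n$, rather than depending on how many times $f$ has been composed; this requires careful comparison between $M(r, f^n)$ and the iterated quantity $M^n(r, f)$ on the half-line $r > R_0$.
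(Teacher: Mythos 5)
Your $n=1$ argument is essentially sound: with $\phi(t)=\log M(e^t,f)$, the monotonicity of $g(t)=\phi(t)/t$ on a half-line $[t_0,\infty)$ does give both (\ref{Mbigeq}) and (\ref{Msmalleq}) with $R_0=e^{t_0}$, and eventual monotonicity of $g$ follows from (\ref{Meq0}) and (\ref{Meq1}) much as you say. The only blemish there is the use of $\phi''$: $\log M(e^t)$ need not be twice differentiable at every point, but this is harmless, since the same conclusion (that $G(t)=t\phi'(t^{+})-\phi(t)$ is non-decreasing and eventually non-negative) can be obtained using the non-decreasing one-sided derivative of the convex function $\phi$. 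Note, however, that the paper itself gives no proof of this lemma: it is quoted from Bergweiler, Rippon and Stallard, and the substantive content of that cited theorem is exactly the part your proposal leaves open, namely that one threshold $R_0(f)$ works simultaneously for \emph{all} $n$.

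That uniformity in $n$ is a genuine gap, not a routine verification. Your first route (apply the $n=1$ argument to the entire function $f^n$) produces a threshold $t_0(f^n)$ — the point beyond which $\log M(e^t,f^n)/t$ is non-decreasing — and nothing in the argument bounds these thresholds independently of $n$; enlarging $R_0$ so that $M(r,f)>r$ for $r>R_0$ does not address this. Your second route proves, by induction, $M^n(r^c,f)\geq M^n(r,f)^c$ for the \emph{iterated} maximum modulus, which is a different statement: the lemma concerns $M(r,f^n)$, the maximum modulus of the iterate, and while $M(r,f^n)\leq M^n(r,f)$ always holds, the reverse-type comparison you would need (a lower bound for $M(r,f^n)$ in terms of $M^n(\cdot,f)$ at a comparable radius) is precisely the ``careful comparison'' you defer and do not supply; it does not follow from (\ref{Meq0}) and (\ref{Meq1}) by elementary manipulation, since lower bounds for the maximum modulus of a composition require covering or harmonic-measure type estimates. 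The uniform statement is genuinely needed in this paper — for example, in the proof of Theorem~\ref{Tloops}(c) inequality (\ref{Msmalleq}) is applied to $f^m$ for every $m\in\mathbb{N}$ at a fixed radius, so an $m$-dependent $R_0$ would not do. To complete your proof you must either establish such a comparison or, as the paper does, cite the result of Bergweiler, Rippon and Stallard.
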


%
%
We denote the inverse function of $M$, when this is defined, by $M^{-1}$. For simplicity we write $M^{-n}$, for $n\isnatural$, to denote $n$ repeated iterations of $M^{-1}$. Observe that $M^{-1}(r)$ is defined for $r~\in~[|f(0)|,~\infty)$ and is strictly increasing. Moreover, by (\ref{Meq0}) and \cite[Theorem 7.2.2]{MR2467621}, $\log M^{-1}(e^s)$ is a concave and increasing function of $s$. Also, if $R_0$ is the constant from Lemma~\ref{LMtheo}, then it follows from (\ref{Mbigeq}) that
\begin{equation}
\label{mu1}
M^{-1}(r^c) \leq M^{-1}(r)^c, \qfor r>\max\{M(R_0), \ |f(0)|\}, \ c > 1.
\end{equation}

%
%
We next require a number of results concerning multiply connected Fatou components. Our first is the following well-known result of Baker \cite[Theorem 3.1]{MR759304}, which we often use without comment.
\begin{lemma}
\label{Lbaker}
Suppose that $f$ is a {\tef} and that $U$ is a multiply connected Fatou component of $f$. Then each $U_n$ is bounded and multiply connected, $U_{n+1}$ surrounds $U_n$ for large $n$, and $U_n\rightarrow\infty$ as $n\rightarrow\infty$.
\end{lemma}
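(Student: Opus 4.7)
The result is a classical theorem of Baker, and my plan is to follow his original argument. Since $U$ is multiply connected, there exists a Jordan curve $\gamma \subset U$ that is not null-homotopic in $U$, and if $B$ denotes the bounded component of $\mathbb{C}\setminus\gamma$, then $B \setminus U \ne \emptyset$, so one can pick $\zeta \in B \setminus U$. The central geometric observation is that for every $n \geq 0$ the compact set $f^n(\gamma) \subset U_n$ surrounds the point $f^n(\zeta)$. This follows from the maximum modulus principle applied to the entire function $f^n$ on $\overline{B}$, combined with a connectivity argument which forces $f^n(\overline{B})$ into the closure of the bounded component of $\mathbb{C}\setminus f^n(\gamma)$.

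The core of the argument is then to show that $f^n \to \infty$ locally uniformly on $U$. Normality on $U$ provides a subsequence $f^{n_k}$ converging locally uniformly to some $\phi : U \to \mathbb{C}\cup\{\infty\}$, and the task is to rule out the case that $\phi$ is finite-valued. If $\phi$ were finite, then $\phi(\gamma)$ would be a compact subset of $\mathbb{C}$, the bounded complements of $f^{n_k}(\gamma)$ would eventually lie in a common disc $D$, and by the surround property above so would the orbit $\{f^{n_k}(\zeta)\}$; Montel's theorem then places $\zeta$ in a Fatou component $V \ne U$. A contradiction is then extracted from the transcendental growth $\log M(r,f)/\log r \to \infty$ recorded in (\ref{Meq1}): applied to iterates of the surrounding loops, this growth forces the enclosed region to expand without bound, which is incompatible with confinement in $D$. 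I expect this step, which is the essence of Baker's original insight, to be the main technical obstacle.

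Once $f^n \to \infty$ on $U$ has been established, the remaining assertions follow. Each $U_n$ is bounded because $f^n$ is bounded on $\overline{B}$ by its maximum modulus on $\gamma$, together with a separate argument ruling out an unbounded Fatou component on which all iterates escape (such a component would contain a path to infinity whose existence is incompatible with the growth of $f$). That $U_n$ is multiply connected then follows because $f^n(\gamma) \subset U_n$ is a non-null-homotopic loop enclosing $f^n(\zeta) \notin U_n$. For $U_{n+1}$ to surround $U_n$ once $n$ is large, I would appeal to Lemma \ref{LMtheo}: as soon as the minimum modulus on $U_n$ exceeds $R_0$, the growth estimate (\ref{Mbigeq}) applied to a point of maximal modulus in $U_n$ forces $f(U_n) \subset U_{n+1}$ to encircle $U_n$ entirely. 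Finally, $U_n\to\infty$ is a direct consequence of $f^n\to\infty$ on $U$ applied to any compact subset of $U$.
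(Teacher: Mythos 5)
The paper offers no proof of this lemma at all --- it is quoted directly from Baker \cite{MR759304} --- so your sketch has to stand on its own, and as it stands the decisive step is missing. The heart of Baker's theorem is showing that no subsequence $f^{n_k}$ can have a finite locally uniform limit on $U$, and your mechanism for this does not work: boundedness of the single orbit $\{f^{n_k}(\zeta)\}$ gives no normal-family conclusion at $\zeta$, since Montel's theorem needs omitted values or uniform bounds on a neighbourhood, and a Julia point can perfectly well have a bounded orbit (a repelling fixed point, say); so the deduction that ``Montel places $\zeta$ in a Fatou component $V\ne U$'' is a non sequitur, and the follow-up appeal to (\ref{Meq1}) ``forcing the enclosed region to expand'' names no actual mechanism. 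The classical contradiction uses instead the blowing-up property of the Julia set, i.e.\ precisely Lemma~\ref{Lblow} of this paper, and this is also why $\zeta$ must be chosen in $B\cap J(f)$ (possible because the connected set $B$ meets both $U$ and its complement, hence meets $\partial U\subset J(f)$) rather than merely in $B\setminus U$: if $|f^{n_k}|\le C$ on $\gamma$, the maximum principle gives $f^{n_k}(\overline{B})\subset\{w:|w|\le C\}$, while Lemma~\ref{Lblow} applied to the neighbourhood $B$ of $\zeta\in J(f)$ gives $f^{n}(B)\supset\{w:|w|=R\}$ for all large $n$ once $R>C$ is fixed suitably --- an immediate contradiction. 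The Julia-point choice of $\zeta$ is also needed later, where you assert $f^n(\zeta)\notin U_n$; for $\zeta$ in some other Fatou component this is unjustified, since distinct components can map into the same one.

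Two further steps are also flawed. Boundedness of $U_n$ does not follow from boundedness of $f^n$ on $\overline{B}$ (in general $U_n\not\subset f^n(B)$, and $f^n(B)$ even meets $J(f)$), and the ``separate argument'' you invoke rests on a false principle: unbounded Fatou components in which $f^n\to\infty$ do exist (Fatou's example $z+1+e^{-z}$ has an unbounded invariant Baker domain), so escape along an unbounded component is not by itself ``incompatible with the growth of $f$''; boundedness must come from the curves $f^n(\gamma)$, which eventually surround arbitrarily large discs while lying in pairwise distinct Fatou components. Similarly, ``apply (\ref{Mbigeq}) at a point of maximal modulus in $U_n$'' is not yet an argument that $U_{n+1}$ surrounds $U_n$: one must show that the whole of $U_n$, whose outer extent can be vastly larger than the modulus of $\gamma_n$, lies in a bounded complementary component of a curve in $U_{n+1}$, and this comparison between the outward extent of $U_n$ and the minimum modulus of $f$ on a suitable surrounding curve is exactly where the real work (and the convexity estimate behind (\ref{Mbigeq})) enters in Baker's proof. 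Finally, $U_n\to\infty$ concerns $\inf_{z\in U_n}|z|$, not just images of a fixed compact set, so it too relies on the surrounding property rather than local uniform convergence alone. In short, the skeleton is the right one, but each load-bearing step is either missing or rests on an invalid deduction.
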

In particular, if $U$ is a multiply connected Fatou component, then $U$ is bounded and so $f^n : U \to U_n$ is a proper map, for $n\isnatural$; see \cite[Corollary 1]{MR1642181}. Hence $U_n = f^n(U)$, for $n\isnatural$.

%
%
We need a number of results from \cite{2011arXiv1109.1794B}. Suppose that $f$ is a {\tef} with a multiply connected Fatou component $U=U_0$, and let $z_0\in U$ be fixed. It follows from \cite[Theorem 1.2]{2011arXiv1109.1794B} that there exists $\alpha > 0$ such that, for large $n$, the maximum annulus centred at the origin, contained in $U_n$ and containing $f^n(z_0)$ is of the form
\begin{equation}
\label{Bneq}
B_n = A(r_n^{a_n},r_n^{b_n}), \ \text{where } r_n = |f^n(z_0)|, \ 0<a_n< 1-\alpha < 1+ \alpha < b_n.
\end{equation}
We require part of \cite[Theorem 1.5]{2011arXiv1109.1794B}.
\begin{lemma}
\label{LThe1.5}
Suppose that $f$ is a {\tef} with a multiply connected Fatou component $U$, and let $z_0 \in U$. For large $n\isnatural$, let $r_n, \ a_n$ and $b_n$ be as defined in (\ref{Bneq}), and let ${\underline{a}_n}$ denote the smallest value such that $$\{z : |z| = r_n^{\underline{a}_n}\} \cap \innb U_n \ne \emptyset.$$ Then, as $n \rightarrow\infty$, $$a_n\rightarrow a \in [0,1), \quad {\underline{a}_n}\rightarrow a, \quad\text{and}\quad {b_n}\rightarrow b\in (1, \infty].$$
\end{lemma}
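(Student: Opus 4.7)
The strategy I would follow is to use the iterative structure $U_{n+1} = f(U_n)$ together with the distortion inequalities in Lemma~\ref{LMtheo} to force recursive inequalities on $(a_n)$, $(b_n)$ that propagate monotonicity (up to negligible error), and then use the a priori bound $a_n \in [0, 1-\alpha]$, $b_n \in [1+\alpha, \infty]$ from (\ref{Bneq}) to extract limits. The role of the point $z_0$ enters via $r_{n+1} = |f(f^n(z_0))|$, which is comparable to $M(r_n)$ up to a controlled factor coming from the fact that $f^n(z_0)$ lies on a circle inside $B_n$.

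To prove $b_n \to b \in (1, \infty]$, I would apply $f$ to the outer circle of $B_n$: since $\{|z| = r_n^{b_n}\} \subset \overline{U_n}$, every image modulus is at most $M(r_n^{b_n})$, and the outer circle of $B_{n+1}$ reaches at least $M(r_n^{b_n})/(\text{distortion})$. Combining with (\ref{Mbigeq}) and the comparison $r_{n+1} \asymp M(r_n)$ yields an estimate of the form $b_{n+1} \ge b_n(1 - \varepsilon_n)$ with $\varepsilon_n \to 0$, forcing $(b_n)$ to have a limit in $(1+\alpha, \infty]$. The symmetric argument for the inner circle, now using (\ref{Msmalleq}) and the concavity of $\log M^{-1}(e^t)$ (equivalently the inequality (\ref{mu1}) reversed), gives $a_{n+1} \le a_n(1 + \varepsilon_n')$, so $(a_n)$ converges to some $a \in [0, 1-\alpha] \subset [0,1)$.

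For the equality $\underline{a}_n \to a$, I would argue by contradiction: suppose $\underline{a}_{n_k} \le a - \delta$ along some subsequence. Pick points $w_{n_k} \in \innb U_{n_k}$ with $|w_{n_k}| = r_{n_k}^{\underline{a}_{n_k}}$. Each $w_{n_k}$ lies on the boundary of the bounded complementary component of $U_{n_k}$ containing $0$, so by the proper mapping $f : U_{n_k - 1} \to U_{n_k}$ there exists a pre-image boundary point $w_{n_k - 1}' \in \innb U_{n_k - 1}$ with $|f(w_{n_k-1}')| = |w_{n_k}|$, and hence $|w_{n_k-1}'|$ controlled from above by $M^{-1}(|w_{n_k}|)$. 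Iterating this pull-back $n_k$ times and using (\ref{mu1}) should contradict the fixed location of $\innb U_0 = \innb U$ inside a bounded set, pushing $\underline{a}_{n_k}$ back up towards $a_{n_k}$.

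The main obstacle is Step~3: the sequence of pull-backs through $f$ involves branches of $f^{-1}$ which are only well-defined on components of $U_n$, and controlling the modulus of the pre-images rigorously requires the sharper minimum-modulus-type estimates from \cite{2011arXiv1109.1794B}. One must also rule out the possibility that $\innb U_n$ contains long arcs that oscillate in modulus between $r_n^{\underline{a}_n}$ and $r_n^{a_n}$; this is where the convexity of $\log M(e^t)$ and a winding/degree argument on the annuli $B_n$ become essential.
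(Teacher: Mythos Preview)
This lemma is not proved in the paper at all: it is quoted verbatim as part of \cite[Theorem~1.5]{2011arXiv1109.1794B} (Bergweiler--Rippon--Stallard), and the paper simply cites it as background. So there is no ``paper's own proof'' to compare your sketch against; the authors treat the result as a black box imported from the literature, alongside Lemmas~\ref{L1.3} and~\ref{L5.1} from the same source.

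As for the sketch itself: your outline for the convergence of $(a_n)$ and $(b_n)$ via recursive inequalities of the type $b_{n+1}\ge b_n(1-\varepsilon_n)$ is broadly in the spirit of the arguments in \cite{2011arXiv1109.1794B}, but the actual proof there relies on substantially finer machinery than Lemma~\ref{LMtheo} alone---in particular on Hadamard three-circles estimates and on the harmonic function $h$ of (\ref{hdefinU}), whose limiting behaviour is what ultimately pins down $a$ and $b$. Your Step~3 argument for $\underline{a}_n\to a$ has a genuine gap that you yourself flag: pulling back a boundary point of $\innb U_{n_k}$ through $f$ does \emph{not} give a modulus bounded above by $M^{-1}(|w_{n_k}|)$, since $M^{-1}$ controls the \emph{minimum} preimage modulus only for points on the maximum-modulus curve, not for arbitrary boundary points. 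The correct argument in \cite{2011arXiv1109.1794B} instead shows directly that every point of $\innb U_n$ has modulus close to $r_n^{a_n}$ by combining the minimum-modulus estimate of Lemma~\ref{L5.1} (applied inside $C_{n-1}$) with the fact that $f^{n-1}(\innb U)\subset C_{n-1}$ for large $n$ from Lemma~\ref{L1.3}; no iterated pull-back is needed.
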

We also need the following \cite[Theorem 1.3]{2011arXiv1109.1794B} which shows that any compact subset of $U$ eventually iterates into the maximal annulus $B_n$.
\begin{lemma}
\label{L1.3}
Let $f, \ U, \ z_0$ be as in Lemma~\ref{LThe1.5}. For large $n\isnatural$, let $r_n, \ a_n, \ b_n$ and $B_n$ be as in (\ref{Bneq}). Then, for each compact set $C \subset U$, there exists $N\isnatural$ such that
\begin{equation}
f^n(C) \subset C_n \subset B_n, \qfor n\geq N,
\end{equation}
where
\begin{equation}
\label{Cndef}
C_n= A\left(r_n^{a_n+2\pi\delta_n}, r_n^{b_n(1-3\pi\delta_n)}\right), \text{ with } \delta_n= 1/\sqrt{\log r_n}.
\end{equation}
\end{lemma}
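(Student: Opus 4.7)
The proof combines the Schwarz--Pick inequality with explicit bounds on the hyperbolic metric of the annulus $B_n$. Since $C$ is a compact subset of the hyperbolic domain $U$, there is a constant $K>0$, depending only on $C$ and $z_0$, with $d_U(z, z_0) \leq K$ for every $z \in C$. The holomorphic maps $f^n : U \to U_n$ contract the hyperbolic distance, so
\[
d_{U_n}\bigl(f^n(z), f^n(z_0)\bigr) \leq K, \qfor z \in C,\; n \isnatural.
\]
The task is to convert this uniform bound into a pointwise constraint on $|f^n(z)|$ near the middle of $B_n$.

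First I would show that $f^n(C) \subset B_n$ for $n$ large. The subtle point is that $B_n \subset U_n$ forces $\rho_{U_n} \leq \rho_{B_n}$, so a bound on $d_{U_n}$ is \emph{weaker} than one on $d_{B_n}$. To handle this I would exploit the maximality of $B_n$: by definition of $a_n$ and $b_n$, both circles $\{|z|=r_n^{a_n}\}$ and $\{|z|=r_n^{b_n}\}$ meet $\partial U_n$, so any path in $U_n$ from $f^n(z_0)$ that leaves $B_n$ must approach $\partial U_n$, where $\rho_{U_n}$ blows up. A standard lower bound of the form $\rho_{U_n}(\zeta) \geq c/(|\zeta|\,\mathrm{dist}(\zeta,\partial U_n))$, or equivalently an extremal length estimate through the pinches of $U_n$ near $\partial B_n$, then shows that the hyperbolic distance in $U_n$ from $f^n(z_0)$ to any component of $U_n \setminus B_n$ grows with $n$ and eventually exceeds $K$.

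Once $f^n(z)$ is known to lie in $B_n$, I would localise it using the explicit hyperbolic metric of an annulus. Passing to the logarithmic cover, $B_n$ corresponds to a horizontal strip of width $(b_n-a_n)\log r_n$ on which the hyperbolic density is
\[
\rho(w) = \frac{\pi}{(b_n - a_n)\log r_n \cdot \sin\bigl(\pi\, t(w)\bigr)}, \qquad t(w) = \frac{\mathrm{Re}\, w - a_n\log r_n}{(b_n - a_n)\log r_n}.
\]
The lift of $f^n(z_0)$ sits at relative position $t_0 = (1-a_n)/(b_n-a_n)$, bounded away from $0$ and $1$ by Lemma~\ref{LThe1.5}. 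Integrating this density radially shows that a hyperbolic $K$-ball about the lift of $f^n(z_0)$ reaches relative positions bounded away from $0$ and $1$ by an amount of order $1/\log r_n$. Choosing $\delta_n = 1/\sqrt{\log r_n}$ provides a generous safety factor that absorbs this quantity; the asymmetry between $2\pi\delta_n$ and $3\pi b_n\delta_n$ in the definition of $C_n$ accommodates the fact that $b_n$ may be very large (or tend to $\infty$ in the limit), so the outer exponent requires a multiplicative, rather than additive, correction.

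The main obstacle is the first step. Monotonicity of hyperbolic metrics alone compares $d_{U_n}$ and $d_{B_n}$ in the wrong direction, so one must convert the maximality of $B_n$, together with the geometric information in Lemma~\ref{LThe1.5} that $a_n$ and $\underline{a}_n$ share the same limit, into a quantitative lower bound on $d_{U_n}(f^n(z_0),\, U_n\setminus B_n)$. Once this comparison is established, the logarithmic-strip calculation of the third paragraph proceeds by explicit integration of the sine density.
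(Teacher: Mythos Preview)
The paper does not prove this lemma; it is quoted verbatim as \cite[Theorem~1.3]{2011arXiv1109.1794B}, so there is no in-paper proof to compare against. What I can do is assess your outline against the argument in that reference.

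Your Schwarz--Pick opening is exactly right and is what Bergweiler, Rippon and Stallard do: the bound $d_{U_n}(f^n(z),f^n(z_0))\le K$ for $z\in C$ is the engine of the whole proof. You have also correctly identified the genuine difficulty, namely that $B_n\subset U_n$ makes the monotonicity of the hyperbolic metric point the wrong way, so one cannot simply transfer the $K$-bound into $B_n$.

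Where your outline becomes vague is in the resolution of that difficulty. The estimate you invoke, $\rho_{U_n}(\zeta)\ge c/(|\zeta|\,\mathrm{dist}(\zeta,\partial U_n))$, is not a standard inequality (the usual quasi-hyperbolic bound has no $|\zeta|$ factor, and the twice-punctured-plane bound has $\log|\zeta|$ rather than $\mathrm{dist}(\zeta,\partial U_n)$), and the extremal-length alternative you mention is not fleshed out. In \cite{2011arXiv1109.1794B} the comparison is made concrete as follows: since $U_n$ omits the origin and omits a point of $\partial U_n$ on each boundary circle of $B_n$, one has $U_n\subset\mathbb{C}\setminus\{0,p_n\}$ with $|p_n|=r_n^{a_n}$ (and similarly with $|q_n|=r_n^{b_n}$). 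After rescaling, the explicit asymptotic $\rho_{\mathbb{C}\setminus\{0,1\}}(z)\sim 1/(2|z|\log|z|)$ for large $|z|$ gives a lower bound on $d_{U_n}$ along any path from $f^n(z_0)$ to a point of modulus at most $r_n^{a_n+2\pi\delta_n}$; integrating yields a quantity comparable to $\log\bigl((1-a_n)\sqrt{\log r_n}\bigr)$, which tends to infinity and eventually exceeds $K$. This is precisely the mechanism your heuristic is gesturing at, but the twice-punctured-plane comparison is the missing concrete step. Once that is in place, your third paragraph (the strip calculation) is not needed as a separate stage: the same $\mathbb{C}\setminus\{0,1\}$ estimate already delivers $f^n(C)\subset C_n$ directly, without first landing in $B_n$.
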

We also need the following, which shows that within $C_n$ the modulus of $f$ is very close to the maximum modulus, for large values of $n$. This is summarised from \cite[~Theorem~5.1(b)]{2011arXiv1109.1794B}.
\begin{lemma}
\label{L5.1}
Let $f, \ U$ and $z_0$ be as in Lemma~\ref{LThe1.5}. For large $n\isnatural$, let $r_n, \ a_n$ and $b_n$ be as in (\ref{Bneq}), and let $\delta_n=1/\sqrt{\log r_n}$. Then, there exists $N$ such that for $n\geq N$, and $m\isnatural$,
\begin{equation}
\label{L5.1eq}
\log |f^m(z)| \geq \left(1 - \delta_n\right) \log M(|z|,f^m), \qfor z \in A\left(r_n^{a_n+2\pi\delta_n}, r_n^{b_n-2\pi\delta_n}\right).
\end{equation}
\end{lemma}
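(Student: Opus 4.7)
The plan is to use that $\log|f^m|$ is a positive harmonic function on $B_n$ for large $n$, and then apply Harnack's inequality in a large disc contained in $B_n$ to compare $\log|f^m(z)|$ with $\log M(|z|,f^m)$.

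First I would check that $f^m$ is zero-free on $B_n$ for large $n$: since $B_n\subset U_n$ we have $f^m(B_n)\subset U_{n+m}$, and by Lemma~\ref{Lbaker} the components $U_{n+m}$ move off to infinity, so $0\notin U_{n+m}$ for $n$ sufficiently large. Hence $u:=\log|f^m|$ is a positive harmonic function on $B_n$. Next, fix $z$ in the narrower annulus $A(r_n^{a_n+2\pi\delta_n},r_n^{b_n-2\pi\delta_n})$ and let $z^*=|z|e^{i\theta^*}$ satisfy $|f^m(z^*)|=M(|z|,f^m)$. Passing to logarithmic coordinates $\zeta=\log w$, the annulus $B_n$ lifts to a vertical strip of width $(b_n-a_n)\log r_n$, and the circle $\{|w|=|z|\}$ lifts to a vertical line at distance at least $2\pi\delta_n\log r_n$ from either edge of the strip.

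The main step is then Harnack's inequality: working in logarithmic coordinates, place a disc of radius $R=2\pi\delta_n\log r_n$ centred at the midpoint of suitable lifts of $\log z$ and $\log z^*$, so that both points lie in the disc and their separation is at most $2\pi$. Since $\delta_n\log r_n=\sqrt{\log r_n}\to\infty$, the standard Harnack ratio $(R-\pi)/(R+\pi)$ tends to $1$ at rate $O(1/\sqrt{\log r_n})$, yielding the desired bound $u(z)\geq (1-\delta_n)\,u(z^*)=(1-\delta_n)\log M(|z|,f^m)$ once $n$ is large enough.

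The main obstacle is producing the sharp multiplicative factor $(1-\delta_n)$ rather than a weaker $(1-C\delta_n)$: Harnack naturally delivers $1-c/\sqrt{\log r_n}$ for some absolute constant $c>0$, so one must either enlarge the buffer from $2\pi\delta_n$ to $K\delta_n$ for $K$ sufficiently large and rely on the slack in $\delta_n\log r_n\to\infty$, or chain Harnack across several overlapping discs to tighten the constant. The precise argument in \cite{2011arXiv1109.1794B} carries this out, and the factor $(1-\delta_n)$ emerges provided $N$ is chosen large enough that the lower-order Harnack loss is dominated by $\delta_n$.
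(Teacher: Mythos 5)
You should first note that the paper itself contains no proof of this lemma: it is quoted, in summarised form, from \cite[Theorem 5.1(b)]{2011arXiv1109.1794B}, so the only comparison available is with that source, where essentially the same mechanism you propose (Harnack-type estimates for the positive harmonic functions $\log|f^m|$ on the big annulus $B_n$) is what drives the result. Your sketch is sound, with two points to tighten. First, positivity of $\log|f^m|$ on $B_n$ requires $|f^m|>1$ there, not merely $f^m\neq 0$; this follows from the same appeal to Lemma~\ref{Lbaker}, since $f^m(B_n)\subset U_{n+m}$ and every $U_{n'}$ with $n'\geq n$ lies outside the closed unit disc once $n$ is large enough (this also makes the threshold $N$ independent of $m$, as the statement requires). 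Second, the ``obstacle'' you flag about the constant is not a real one, and neither enlarging the buffer (which you are not free to do, since the annulus in (\ref{L5.1eq}) is prescribed) nor chaining discs is needed: in the strip $\{a_n\log r_n<\mathrm{Re}\,\zeta<b_n\log r_n\}$, centre a single disc at the lift $\zeta_1$ of $z$ with radius $R=2\pi\delta_n\log r_n=2\pi\sqrt{\log r_n}$ (it fits in the strip precisely because $z$ lies in the smaller annulus), and choose the lift $\zeta_2$ of the maximum point $z^*$ on $\{|w|=|z|\}$ with $|\mathrm{Im}\,\zeta_2-\mathrm{Im}\,\zeta_1|\leq\pi$. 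One application of Harnack's inequality to $v(\zeta)=\log|f^m(e^{\zeta})|$ gives $v(\zeta_1)\geq\frac{R-\pi}{R+\pi}\,v(\zeta_2)$, and $\frac{R-\pi}{R+\pi}=1-\frac{2\pi}{R+\pi}>1-\frac{2\pi}{R}=1-\delta_n$, which is exactly the claimed factor; so your approach closes completely without deferring the sharp constant to the cited paper.
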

The following is a straightforward consequence of these lemmas.
\begin{lemma}
\label{mconnlemma}
Let $f$ and $U$ be as in Lemma~\ref{LThe1.5}, let $z \in U$ and let $0 < c < 1$. Then there exists $N\isnatural$ such that
\begin{equation}
\label{eq1}
|f^{n+m}(z)| \geq M^m(|f^n(z)|^c), \qfor n\geq N, \ m\isnatural.
\end{equation}
\end{lemma}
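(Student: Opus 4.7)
The plan is to iterate Lemma~\ref{L5.1} (with $m=1$) along the orbit of $z$, and then to absorb the small loss picked up at each step into the exponent $c$ by repeated application of (\ref{Msmalleq}).

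First, by Lemma~\ref{L1.3} applied to the compact set $\{z\}$, there is some $N_0$ such that $f^{n+k}(z) \in C_{n+k}$ for every $n \geq N_0$ and $k \geq 0$. For $n$ sufficiently large, $C_{n+k}$ is contained in the annulus required by Lemma~\ref{L5.1}, so applying that lemma with $m=1$ at each iterate yields the one-step bound
\[
|f^{n+k+1}(z)| \geq M(|f^{n+k}(z)|)^{1-\delta_{n+k}}, \qfor k \geq 0.
\]
Applying the same bound to $z_0$ gives $\log r_{n+1} \geq (1-\delta_n)\log M(r_n)$, which together with (\ref{Meq1}) forces $\log r_{n+k}$ to grow at least geometrically in $k$. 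Hence $\delta_{n+k}$ is summable and $\prod_{k=0}^{\infty}(1-\delta_{n+k}) \to 1$ as $n \to \infty$. I choose $N \geq N_0$ large enough that $c_m := \prod_{k=0}^{m-1}(1-\delta_{n+k}) \geq c$ for every $n \geq N$ and every $m \isnatural$.

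The core of the argument is then an induction on $m$ showing
\[
|f^{n+m}(z)| \geq M^m(|f^n(z)|^{c_m}), \qfor n \geq N.
\]
The step combines the one-step bound with the auxiliary inequality $M^k(r)^{c'} \geq M^k(r^{c'})$, valid for $c'\in(0,1)$ and $r$ large, which follows by an inner induction from (\ref{Msmalleq}) and the monotonicity of $M$: if $M^{k-1}(r)^{c'}\geq M^{k-1}(r^{c'})$, then
\[
M^k(r)^{c'} = M(M^{k-1}(r))^{c'} \geq M(M^{k-1}(r)^{c'}) \geq M(M^{k-1}(r^{c'})) = M^k(r^{c'}).
\]
Taking $c'=1-\delta_{n+m}$ and $r=|f^n(z)|^{c_m}$, and using $c_{m+1}=c_m(1-\delta_{n+m})$, produces the inductive step. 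Since $c_m \geq c$ and $|f^n(z)| > 1$, the stated inequality follows.

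The main obstacle is the uniform bookkeeping required: $N$ must be chosen simultaneously large enough that every iterate $f^{n+k}(z)$ lies in the good annulus of Lemma~\ref{L5.1}, that every $|f^{n+k}(z)|$ and every $|f^n(z)|^{c_m}$ exceeds the threshold of Lemma~\ref{LMtheo} needed to invoke (\ref{Msmalleq}), and that the tail of $\sum \delta_{n+k}$ is small enough to give $\prod(1-\delta_{n+k}) \geq c$. The fast growth of $r_n$ established in the second paragraph makes each of these checks routine.
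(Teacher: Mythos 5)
Your proposal is correct and follows essentially the same route as the paper: the one-step estimate $|f^{n+1}(z)|\geq M(|f^n(z)|)^{1-\delta_n}\geq M(|f^n(z)|^{1-\delta_n})$ obtained from Lemmas~\ref{L1.3} and~\ref{L5.1} together with (\ref{Msmalleq}), iterated so that the accumulated exponent $\prod_k(1-\delta_{n+k})$ is kept above $c$ by choosing $N$ large. The only (inessential) difference is how you justify convergence of that product: you derive geometric growth of $\log r_n$ from the one-step bound and (\ref{Meq1}), whereas the paper uses $U\subset A(f)$ for the same purpose.
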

\begin{proof}
Fix $z_0\in U$, and let $\beta_n= 1 - 1/\sqrt{\log r_n}$, where $r_n = |f^n(z_0)|$, for $n\isnatural$. It is well-known that it follows from Lemma~\ref{Lbaker} that $U \subset A(f)$. Hence there exists $\ell \isnatural$ such that $r_{n+\ell} \geq M^n(R)$, for $n \isnatural$, where $R > 0$ is such that $M^n(R)\rightarrow\infty$ as $n\rightarrow\infty$. It follows from (\ref{Meq1}) that we can choose $N\isnatural$ sufficiently large that 
\begin{equation}
\label{betaeq}
\prod_{k=N}^{\infty} \beta_{k} > c.
\end{equation}
Now let $z \in U$. We can further assume that $N$ is sufficiently large that $$|f^n(z)|^c> R_0, \qfor n~\geq~N,$$ where $R_0$ is the constant from Lemma~\ref{LMtheo}. Now, by Lemma~\ref{LThe1.5}, $$C_n \subset A\left(r_n^{a_n+2\pi\delta_n}, r_n^{b_n-2\pi\delta_n}\right),$$ for large values of $n$. Hence, we can assume, by Lemma~\ref{L1.3} and Lemma~\ref{L5.1}, that $N$ is sufficiently large that 
\begin{equation}
\label{intereq}
\log |f^{n+1}(z)| \geq \beta_n\log M(|f^n(z)|), \qfor n\geq N.
\end{equation}
Hence, by (\ref{intereq}) and (\ref{Msmalleq}),
\begin{equation}
\label{eq2}
|f^{n+1}(z)| \geq M(|f^n(z)|)^{\beta_n} \geq M(|f^n(z)|^{\beta_n}), \qfor n\geq N.
\end{equation}
By repeated application of (\ref{eq2}) and (\ref{Msmalleq}), and by (\ref{betaeq}), we have that
\begin{equation*}
|f^{n+m}(z)| \geq M^m(|f^n(z)|^{\prod_{k=0}^{m-1} \beta_{n+k}}) \geq M^m(|f^n(z)|^c), \qfor n \geq N, \ m\isnatural,
\end{equation*}
as required.
\end{proof}

%
%
We also need the following \cite[Theorem 2.3]{Rippon01102012}.
\begin{lemma}
\label{Lepoints}
Let $f$ be a {\tef} and let $\eta > 1$. There exists $R_0' = R_0'(f) > 0$ such that if $r > R_0'$, then there exists $$z'\in A(r, \eta r) \cap A(f)$$ with $$|f^n(z')| > M^n(r,f), \qfor n \isnatural,$$ and hence $$z' \in A_r(f) \ \ \text{and} \ \ M(\eta r, f^n) > M^n(r, f), \qfor n\isnatural.$$
\end{lemma}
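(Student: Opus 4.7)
The plan is to construct $z'$ as a limit of carefully chosen backward orbits, then perturb parameters to obtain strict inequalities and membership in the open annulus.

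The foundational tool is a preimage fact from the argument principle: for $R_0' > 0$ sufficiently large, any $R_0' < \rho_1 < \rho_2$, and any $v \in \mathbb{C}$ with $M(\rho_1, f) < |v| \leq M(\rho_2, f)$, there exists a preimage $z$ of $v$ under $f$ with $\rho_1 < |z| \leq \rho_2$. Indeed, $f - v$ has no zeros in $\{|z| \leq \rho_1\}$, since $|f(z)| \leq M(\rho_1, f) < |v|$ there; meanwhile, for $\rho_2$ large the winding number of $f$ on $\{|z|=\rho_2\}$ around $v$ is positive, because $f$ is transcendental and hence has arbitrarily many preimages of any bounded-modulus point in $\{|z| \leq \rho_2\}$. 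By the argument principle $f - v$ then has at least one zero in $\{\rho_1 < |z| \leq \rho_2\}$.

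Using this, for each $n$ I build a point $z_n \in \overline{A(r, \eta r)}$ with $|f^k(z_n)| > M^k(r, f)$ for $1 \leq k \leq n$ by a backward iteration: fix a target $v_n$ with $|v_n| = M^n(\eta r, f)$, and apply the preimage fact recursively with $(\rho_1, \rho_2) = (M^{k-1}(r, f), M^{k-1}(\eta r, f))$ for $k = n, n-1, \ldots, 1$ to obtain $v_{k-1}$ satisfying $f(v_{k-1}) = v_k$ and $|v_{k-1}| \in (M^{k-1}(r, f), M^{k-1}(\eta r, f)]$. The recursion is consistent because the range produced at one step is precisely the hypothesis required at the next. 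Setting $z_n := v_0$ yields $|z_n| \in (r, \eta r]$ and $|f^k(z_n)| = |v_k| > M^k(r, f)$ for each $1 \leq k \leq n$.

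By compactness of $\overline{A(r, \eta r)}$, extract a convergent subsequence $z_{n_j} \to z^*$; continuity of each $f^k$ gives $|f^k(z^*)| \geq M^k(r, f)$ for all $k$, so $z^* \in A_r(f) \subseteq A(f)$. To upgrade to strict inequalities and membership in the open annulus, I repeat the construction with $(r+\varepsilon, \eta r - \varepsilon)$ in place of $(r, \eta r)$ for small $\varepsilon > 0$; the limit $z'$ then lies in $\overline{A(r+\varepsilon, \eta r - \varepsilon)} \subset A(r, \eta r)$ and satisfies $|f^n(z')| \geq M^n(r+\varepsilon, f) > M^n(r, f)$ by strict monotonicity of $M$. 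The closing inequality $M(\eta r, f^n) > M^n(r, f)$ then follows from $|z'| < \eta r$ via the maximum modulus principle. The main obstacle is the preimage fact itself, where the winding number must be controlled even when $|v|$ is close to $M(\rho_2, f)$; the threshold $R_0' = R_0'(f)$ in the statement is determined by this winding number estimate together with the constant from Lemma~\ref{LMtheo}.
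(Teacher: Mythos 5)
There is a genuine gap, and it is located exactly where you place the whole weight of the argument: the ``foundational preimage fact'' is false. Take $f(z)=e^z$, so $M(\rho,f)=e^{\rho}$. For large $\rho_2>\rho_1$ let $v=-e^{\rho_2-\delta}$ with $\delta=\rho_2^{-2}$. Then $M(\rho_1,f)<|v|\leq M(\rho_2,f)$, but every solution of $e^z=v$ has the form $z=(\rho_2-\delta)+(2k+1)\pi i$, and $|z|^2=(\rho_2-\delta)^2+(2k+1)^2\pi^2>\rho_2^2$ because $\pi^2>2\rho_2\delta$; so $v$ has no preimage with $\rho_1<|z|\leq\rho_2$, no matter how large $R_0'$ is. The winding-number justification is also circular: by the argument principle, the winding number of $f(\{|z|=\rho_2\})$ about $v$ \emph{is} the number of solutions of $f(z)=v$ in $\{|z|<\rho_2\}$, which is precisely what you need to show is positive, and it is only locally constant on complementary components of the image curve. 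Knowing that points of \emph{bounded} modulus have many preimages in the disc gives positive winding number about those points, but $v$ has modulus comparable to $M(\rho_2,f)$ and may lie in a different complementary component; in the exponential example the winding number about $v$ is $0$. Since every step of your backward recursion invokes this fact with outer radius exactly $M^{k-1}(\eta r)$ and target modulus up to $M^{k}(\eta r)$, the hypotheses of the recursion are not actually reproduced, and the construction of $z_n$ collapses.

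Note also that the paper does not prove this lemma: it is quoted from Rippon and Stallard \cite[Theorem 2.3]{Rippon01102012}. The difficulty your counterexample exposes is exactly what the known arguments are designed to circumvent: preimages of points of modulus close to $M(\rho_2,f)$ can only be guaranteed at moduli somewhat larger than $\rho_2$, so one must work with covering statements in which the covered annulus has outer radius strictly smaller than $M(\rho_2,f)$ (obtained from Bohr-type covering results and the convexity estimates as in Lemma~\ref{LMtheo}), and compensate by using a sequence of annuli of moduli $\eta_n>1$ with $\prod_n \eta_n\leq\eta$, so that the orbit can still be pinned inside $A(r,\eta r)$. If you want to salvage your approach, you must replace your preimage fact by such a weaker, true covering lemma and redo the bookkeeping of radii; with the fact as stated, the proof does not go through.
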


%
%
Finally, in Section~\ref{SRA} we need the following well-known result \cite[Lemma 2.1]{pre05533139}.
\begin{lemma}
\label{Lblow}
Let $f$ be a transcendental entire function, let $K$ be a compact set with $K \cap E(f) = \emptyset$ and let $\Delta$ be an open {\nhd} of $z \in J( f)$. Then there exists $N\isnatural$ such that $f^n(\Delta) \supset K$, for $n \geq N$. 
\end{lemma}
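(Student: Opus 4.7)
The plan is to combine Montel's theorem with a linearisation-based monotonicity argument. The first step will show that $\bigcup_{n\isnatural} f^n(\Delta)$ misses only points in $E(f)\cup\{\infty\}$, and the second step will upgrade this to the required uniformity by passing to an increasing sequence of images.

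For the first step, since $z\in J(f)\cap \Delta$, the family $\{f^n|_\Delta\}_{n\isnatural}$ is not normal; Montel's theorem then implies that $W := \bigcup_{n\isnatural} f^n(\Delta)$ omits at most two points of $\mathbb{C}\cup\{\infty\}$. One of these is $\infty$, and any finite omitted $w$ satisfies $\bigcup_{n\isnatural} f^{-n}(w) \cap \Delta = \emptyset$, which forces $w \in E(f)$ by the standard density of backward orbits of non-exceptional points in $J(f)$. Hence every omitted point of $W$ lies in $E(f)\cup\{\infty\}$, so in particular $K \subset W$.

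To produce a single $N$ that works for \emph{all} $n\ge N$, I would invoke the density of repelling periodic points in $J(f)$ to pick a repelling periodic point $p \in \Delta$ of some period $k$. The inverse branch of $f^k$ fixing $p$ is a contraction there, so for a sufficiently small open disc $V\subset\Delta$ around $p$ one obtains $V \subset f^k(V)$; consequently, for each fixed residue $j \in \{0, 1, \dots, k-1\}$, the sequence $\bigl(f^{mk+j}(V)\bigr)_{m\isnatural}$ is increasing in $m$.

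Finally, for each such $j$ the family $\{f^{mk+j}|_V\}_{m\isnatural}$ is non-normal at $p$, so the same Montel argument shows that $\bigcup_{m\isnatural} f^{mk+j}(V)$ again omits only points of $E(f) \cup \{\infty\}$ and hence contains $K$. Compactness of $K$ together with the monotonicity just established yields $N_j \isnatural$ with $K \subset f^{mk+j}(V) \subset f^{mk+j}(\Delta)$ for all $m \geq N_j$; taking $N > k\max_j N_j + k$ then gives the lemma. The principal obstacle is precisely this final uniformity step, since Montel's theorem on its own only places $K$ in $f^n(\Delta)$ for infinitely many $n$, and the increasing sequence coming from linearisation is what converts that into the required `eventually' statement.
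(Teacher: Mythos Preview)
The paper does not prove this lemma; it is quoted as a well-known result with a citation to \cite[Lemma 2.1]{pre05533139}, so there is no ``paper's own proof'' to compare against.

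Your argument is correct and is essentially the standard proof of this blowing-up property. Two small remarks. First, your opening step (showing $K\subset\bigcup_n f^n(\Delta)$) is not actually needed, since you redo the Montel argument on the small disc $V$ in each residue class $j$ anyway; the proof stands on the repelling-periodic-point step alone. Second, your justification ``density of backward orbits of non-exceptional points in $J(f)$'' is fine and not circular: it follows directly from Montel by noting that $\overline{O^-(w)\cup\{w\}}$ is a closed backward-invariant set with at least two points whenever $w\notin E(f)$, hence contains $J(f)$. Alternatively, one can argue in place that if $w$ is omitted by $\bigcup_m f^{mk+j}(V)$ then so is every point of $(f^k)^{-1}(w)$, forcing $(f^k)^{-1}(w)\subset\{w\}$ and hence $w\in E(f^k)=E(f)$; either way the conclusion is immediate.
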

Here $O^-(z) = \{w : f^n(w) = z, \text{ for some } n\isnatural\}$ and $E(f) = \{z : O^-(z) \text{ is finite}\}.$ The set $E(f)$ contains at most one point. 
%
%
%
%
\section{A map on a nested sequence of domains}
\label{Sblaschke}
In this section we prove a result about the existence and properties of a fixed point for certain {\tef}s. This may be of independent interest. For a hyperbolic domain $V$, we write $[w, z]_V$ for the hyperbolic distance between $w$ and $z$ in $V$. The main result of this section is as follows.
\begin{theo}
\label{TBlaschke}
Suppose that $f$ is a {\tef}, and that $(G_n)_{n\geq 0}$ is a sequence of bounded simply connected domains such that 
\begin{equation}
\label{Gneq}
\overline{G_n} \subset G_{n+1} \ \text{ and } \ f(\partial G_n) = \partial G_{n+1}, \qfor n=0, 1, 2, \cdots.
\end{equation}
Then there exists $\alpha\in G_0$, a fixed point of $f$, such that, if $K\subset G_0$ is compact, then $$[\alpha, f^n(z)]_{G_n} \rightarrow 0 \text{ as } n\rightarrow\infty, \quad\text{uniformly for } z \in K.$$
\end{theo}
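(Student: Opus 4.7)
The plan is to pass to a disk model via Riemann maps and reduce the statement to a question about compositions of finite Blaschke products.

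First I would establish, from the hypothesis $f(\partial G_n) = \partial G_{n+1}$, that each $f\colon G_n\to G_{n+1}$ is a proper holomorphic surjection of some finite degree $d_n$: the image $f(G_n)$ is open with $\partial f(G_n)\subseteq f(\partial G_n)=\partial G_{n+1}$; using $\overline{G_{n-1}}\subset G_n$ to see that $\partial G_n\subseteq f(G_n)$, one forces $f(G_n)$ to lie in the bounded component of $\mathbb{C}\setminus\partial G_{n+1}$, giving $f(G_n)=G_{n+1}$. Taking Riemann maps $\phi_n\colon\mathbb{D}\to G_n$, set $B_n:=\phi_{n+1}^{-1}\circ f\circ\phi_n$ (a finite Blaschke product of degree $d_n$, since $B_n$ is a proper self-map of $\mathbb{D}$) and $\iota_n:=\phi_{n+1}^{-1}\circ\phi_n$ (injective, with $\overline{\iota_n(\mathbb{D})}$ compactly contained in $\mathbb{D}$ because $\overline{G_n}\subset G_{n+1}$). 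A fixed point $\alpha=\phi_0(\tilde\alpha)\in G_0$ of $f$ corresponds to $B_0(\tilde\alpha)=\iota_0(\tilde\alpha)$ in $\mathbb{D}$, and such a $\tilde\alpha$ exists by Rouch\'e's theorem, since $\sup_{\mathbb{D}}|\iota_0|<1$ while $|B_0(\zeta)|\to 1$ as $|\zeta|\to 1^-$: on $\{|\zeta|\leq\rho\}$ for $\rho<1$ close enough to $1$, $B_0-\iota_0$ and $B_0$ share the same number ($=d_0$) of zeros.

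For the convergence claim, the Schwarz--Pick inequality for $f\colon G_n\to G_{n+1}$ (together with $f(\alpha)=\alpha$) gives $[\alpha,f^{n+1}(z)]_{G_{n+1}}\leq[\alpha,f^n(z)]_{G_n}$, so this hyperbolic distance is non-increasing; to show its limit is zero uniformly in $z\in K$, I would let $D:=\bigcup_n G_n$ (a simply connected domain, possibly $\mathbb{C}$) and note $f(D)\subseteq D$. If $D\neq\mathbb{C}$, then $f|_D$ is a holomorphic self-map of the hyperbolic domain $D$ with interior fixed point $\alpha$; it cannot be an automorphism, since an automorphism would preserve hyperbolic area in $D$ while $f(G_n)=G_{n+1}$ has strictly larger hyperbolic area than $G_n$ (by $\overline{G_n}\subset G_{n+1}$). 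The Denjoy--Wolff theorem then gives $f^n\to\alpha$ locally uniformly on $D$, and combined with the uniform bound $\rho_{G_n}(\alpha)\leq\rho_{G_0}(\alpha)$, the Euclidean convergence $|f^n(z)-\alpha|\to 0$ upgrades to $[\alpha,f^n(z)]_{G_n}\to 0$ uniformly on $K$. If instead $D=\mathbb{C}$, then $f$ cannot be injective on every $G_n$ (otherwise it would be injective on $\mathbb{C}$, hence affine, contradicting transcendence), so infinitely many $d_n\geq 2$; conjugating each $B_n$ by a M\"obius automorphism of $\mathbb{D}$ sending $0$ to $\phi_n^{-1}(\alpha)$ yields self-maps $\widetilde B_n$ of $\mathbb{D}$ fixing $0$, and I would apply Schwarz's lemma together with a normal-families argument to the compositions $\widetilde F_n:=\widetilde B_{n-1}\circ\cdots\circ\widetilde B_0$ to conclude $\widetilde F_n\to 0$ locally uniformly on $\mathbb{D}$.

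The main obstacle I expect is the case $D=\mathbb{C}$: without an ambient hyperbolic metric, the contraction must be extracted from the Blaschke structure itself, and confirming that the strict inclusions $\overline{G_n}\subset G_{n+1}$ (equivalently, compact containment of $\iota_n(\mathbb{D})$) together with the occurrence of infinitely many $d_n\geq 2$ yield enough collective contraction---despite each individual $|\widetilde B_k'(0)|$ possibly being near $1$---is the crux of the argument.
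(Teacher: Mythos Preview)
Your setup via Riemann maps and Blaschke products matches the paper's, except that the paper normalises $\phi_n(0)=\alpha$ from the outset (so that $B_n(0)=0$ without further conjugation) and obtains the fixed point by citing the polynomial-like map literature rather than via Rouch\'e.

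The divergence is in the contraction step. Your Denjoy--Wolff argument when $D=\bigcup_n G_n\neq\mathbb{C}$ is a valid alternative route, but the case $D=\mathbb{C}$ is precisely the one arising in the paper's application (the $G_n$ there are the holes bounded by $\intb U_n$, and these exhaust the plane), and you leave it as an acknowledged gap. The paper avoids any case split by the following observation, which is the idea you are missing: the non-zero zeros $a_{k,n}$ of $B_n$ correspond via $\phi_n$ to the points of $f^{-1}(\alpha)\cap G_n$, and these preimages are \emph{fixed points of $\mathbb{C}$, independent of $n$}. Since $G_n\subset G_{n+1}$, the hyperbolic distance from $\alpha$ to each such preimage can only decrease as $n$ grows, so $|a_{k,n+1}|\le|a_{k,n}|$; as the multiplicities are likewise unchanged, the Blaschke product formula yields
\[
|B_{n+1}'(0)|\;\le\;\prod_k |a_{k,n+1}|^{m_k}\;\le\;\prod_k |a_{k,n}|^{m_k}\;=\;|B_n'(0)|.
\]
Hence $|B_n'(0)|\le\lambda:=|B_0'(0)|<1$ for all $n$, and a Beardon--Carne sharpening of Schwarz's lemma converts this uniform derivative bound into an explicit contraction $|B_{n-1}\circ\cdots\circ B_0(z)|\le\mu^n(r_0)$ on $\{|z|\le r_0\}$, where $\mu(r)=r(r+\lambda)/(1+\lambda r)<r$. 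This directly resolves your worry that individual $|\widetilde B_k'(0)|$ might drift toward $1$: monotonicity pins every one of them below the first.
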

%
%
To prove Theorem~\ref{TBlaschke} we require the following lemma. Define $\mathbb{D} = \{ z : |z| < 1\}$.
\begin{lemma}
\label{LBla}
Suppose that $(B_n)_{n\geq 0}$ is a sequence of analytic functions from $\mathbb{D}$ to $\mathbb{D}$. Suppose also that there exist $\alpha\in\mathbb{D}$  and $\lambda\in (0,1)$ such that
\begin{equation}
\label{Bndefeq}
B_n(\alpha) = \alpha \quad \text{ and } \quad |B_n'(\alpha)| \leq \lambda, \qfor n=0, 1, 2, \cdots.
\end{equation}
Then, if $K'$ is a compact subset of $\mathbb{D}$, $$B_n \circ \cdots \circ B_0(z) \rightarrow \alpha \text{ as } n\rightarrow\infty,\quad \text{uniformly for } z\in K'.$$
\end{lemma}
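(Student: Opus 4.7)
The plan is to reduce to the case $\alpha=0$ by a M\"obius conjugation, and then to produce, on each Euclidean disc $\overline{B(0,r)}\subset\mathbb{D}$, a uniform Euclidean contraction factor $c=c(\lambda,r)<1$ valid for every $B_n$; iterating this bound will give exponential decay to $0$.

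First I would conjugate by the M\"obius automorphism $T$ of $\mathbb{D}$ sending $\alpha$ to $0$. Writing $\tilde B_n=T\circ B_n\circ T^{-1}$, the chain rule at $0$ yields $\tilde B_n(0)=0$ and $|\tilde B_n'(0)|=|B_n'(\alpha)|\leq\lambda$. Uniform convergence on compact sets is preserved by $T^{-1}$, so it is enough to prove the lemma when $\alpha=0$.

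The key step, assuming now $\alpha=0$, is to consider the quotient $g_n(z):=B_n(z)/z$. Since $B_n(0)=0$, $g_n$ extends holomorphically to $\mathbb{D}$; the Schwarz lemma gives $|g_n|\leq 1$; and $g_n(0)=B_n'(0)$ has modulus at most $\lambda$. Applying the Schwarz--Pick lemma to the self-map $g_n$ at the base point $0$ then converts the derivative bound into the pointwise estimate
\[
|B_n(z)|\leq |z|\cdot\frac{\lambda+|z|}{1+\lambda|z|}, \qfor z\in\mathbb{D}.
\]
For a given compact set $K'\subset\mathbb{D}$, I would pick $r\in(0,1)$ with $K'\subset\overline{B(0,r)}$; then the right-hand side above is at most $c|z|$ for $|z|\leq r$, where $c:=(\lambda+r)/(1+\lambda r)\in(0,1)$.

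To finish, note that Schwarz alone gives $|B_n(z)|\leq|z|$, so the iterate $F_n:=B_n\circ\cdots\circ B_0$ maps $\overline{B(0,r)}$ into itself for every $n$. Hence at every stage of the composition the contraction $|B_n(w)|\leq c|w|$ is available, and a straightforward induction yields $|F_n(z)|\leq c^{n+1}|z|\leq c^{n+1}r$ for $z\in K'$, which tends to $0$ uniformly. I do not foresee any genuine obstacle. The only point requiring attention is step three, where one must apply Schwarz--Pick to the quotient $B_n(z)/z$ rather than to $B_n$ itself: this is precisely what turns a derivative bound at the fixed point into a Euclidean contraction factor that is uniform both in $n$ and on any compact subset of $\mathbb{D}$.
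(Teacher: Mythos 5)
Your proposal is correct and follows essentially the same route as the paper: reduce to $\alpha=0$ by Möbius conjugation, establish the pointwise bound $|B_n(z)|\leq |z|\,\frac{|z|+\lambda}{1+\lambda|z|}$, and iterate. The only differences are cosmetic — you derive that bound yourself via Schwarz--Pick applied to the quotient $B_n(z)/z$ where the paper cites the Beardon--Carne inequality, and you conclude with a uniform contraction constant $c=(\lambda+r)/(1+\lambda r)<1$ giving the geometric rate $c^{n+1}r$, whereas the paper iterates the increasing map $\mu(r)=r(r+\lambda)/(1+\lambda r)$; both finishes are valid and yours is, if anything, slightly more explicit.
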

\begin{proof}
By conjugating with a M\"obius map if necessary, we may assume that $\alpha = 0$.

A result of Beardon and Carne \cite[p.217]{MR1216206} states that if $g: \mathbb{D} \to \mathbb{D}$ is an analytic function with $g(0) = 0$, then $$|g(z)| \leq |z|\left(\frac{|z| + |g'(0)|}{1+|g'(0)z|}\right), \qfor z \in \mathbb{D},$$ in which case $$M(r,g) \leq r\left(\frac{r + |g'(0)|}{1+|g'(0)|r}\right), \qfor r \in (0,1).$$ 

Now, $(r + x)/(1+rx)$ is an increasing function of $x$, for $r\in(0, 1)$. Hence, by (\ref{Bndefeq}),
\begin{equation}
\label{muthing}
M(r,B_n) \leq r\left(\frac{r + \lambda}{1+\lambda r}\right) = \mu(r) < r, \qfor r\in (0,1), \ n=0, 1, 2, \cdots.
\end{equation}

Note that $\mu(r)$ is a strictly increasing function of $r\in( 0,1)$. Let $r_0\in (0,1)$ be such that $|z| \leq r_0$, for $z \in K'.$ Then, by (\ref{muthing}), $$|B_{n-1} \circ \cdots \circ B_0 (z)| \leq \mu^n(r_0), \qfor z \in K'.$$ 

Now $\mu(0) = 0, \ \mu(1)=1$, and $0 < \mu(r) < r$, for $r\in(0,1)$. Hence $\mu^n(r_0)\rightarrow 0$ as $n\rightarrow\infty$. This completes the proof of the lemma.
\end{proof}
%
%
%
%
We now prove Theorem~\ref{TBlaschke}.
\begin{proof}[Proof of Theorem~\ref{TBlaschke}]
By (\ref{Gneq}), the triple $(f,G_0,G_{1})$ is a \itshape polynomial-like map \normalfont in the sense of Douady and Hubbard \cite{MR816367}. Hence, by \cite[Lemma 3]{MR1840089} (see also \cite[Lemma 3]{MR1771857}), there exists a point $\alpha~\in~G_0$ such that $f(\alpha) = \alpha$. 

For $n=0, 1, 2, \cdots$, let $\phi_n:\mathbb{D} \to G_n$ be a Riemann map such that $\phi_n(0) = \alpha$. Note that, since hyperbolic distance is preserved under conformal maps, we have that 
\begin{equation}
\label{mapseq}
[\alpha, f^n(z)]_{G_n} = [0, \phi_n^{-1} \circ f^n(z)]_{\mathbb{D}}, \qfor z \in G_0.
\end{equation}
Hence it suffices to show that $[0, \phi_n^{-1} \circ f^n(z)]_{\mathbb{D}}\rightarrow 0$ as $n\rightarrow\infty$, uniformly for $z \in K$.

Define functions $B_n: \mathbb{D} \to \mathbb{D}$ by $$B_n = \phi_{n+1}^{-1} \circ f \circ \phi_n, \qfor n=0, 1, 2, \cdots.$$ Then $B_n$ is a proper map such that $B_n(0) = 0$, for $n=0, 1, 2, \cdots$, and so is a finite Blaschke product 
\begin{equation}
B_n(z) = c_n z^{q_n} \prod_{k=1}^{p_n} \left(\frac{z - a_{k,n}}{1 - \overline{a_{k,n}}z}\right)^{m_{k,n}}, \qfor z \in \mathbb{D},
\end{equation}
where $p_n, q_n, m_{k,n} \isnatural$, $|c_n| = 1$, $0 < |a_{k,n}|< 1$, and $a_{k,n} = a_{k',n}$ implies that $k=k'$, for $n=0, 1, 2, \cdots,$ and $k=1,2,\cdots, p_n$. (See, for example, \cite[p.35]{MR0162913}.)

We claim that there exists $\lambda\in(0,1)$ such that
\begin{equation}
\label{lambdaeq}
|B_n'(0)| \leq \lambda, \qfor n=0, 1, 2, \cdots.
\end{equation}
Suppose first that $q_k \geq 2$ for some $k\isnatural$. Then $B_n'(0) = 0$, for $n=0, 1, 2, \cdots$.

Suppose, on the other hand, that $q_n = 1$, for $n=0, 1, 2, \cdots$. Then 
\begin{equation}
\label{Bneq1}
|B_n'(0)| = \prod_{k=1}^{p_n} |a_{k,n}|^{m_{k,n}} < 1, \qfor n=0, 1, 2, \cdots.
\end{equation}
For $n=0, 1, 2, \cdots,$ and $k=1,2,\cdots, p_n$, set $\alpha_{k,n} = \phi_n(a_{k,n})$. Then $f(\alpha_{k,n}) = \alpha$, and so $\phi_{n+1}^{-1}(\alpha_{k,n})$ is a zero of $B_{n+1}$. Without loss of generality we can assume that $$\alpha_{k,n} = \alpha_{k,n+1} = \phi_{n+1}(a_{k,n+1}), \qfor n=0, 1, 2, \cdots, \ k=1,2,\cdots, p_n.$$ Now, by (\ref{Gneq}), $[\alpha_{k,n},\alpha]_{G_n} \geq [\alpha_{k,n+1},\alpha]_{G_{n+1}}$. Hence, once again since hyperbolic distance is preserved under conformal maps, $|a_{k,n+1}| \leq |a_{k,n}|$. 

Moreover, $m_{k,n}$ and $m_{k,n+1}$ are both equal to the multiplicity of the zero $\alpha_{k,n}$ of $f(z) - \alpha$, and so $m_{k,n} = m_{k,n+1}$. Thus, by (\ref{Bneq1}),  $$|B_{n+1}'(0)| \leq |B_n'(0)|, \qfor n=0, 1, 2, \cdots.$$ This establishes (\ref{lambdaeq}).

Let $K\subset G_0$ be compact. By Lemma~\ref{LBla}, applied with $K' = \phi_0^{-1}(K)$, we obtain that $$[0, B_{n-1} \circ \cdots \circ B_1 \circ B_0 \circ \phi_0^{-1}(z)]_{\mathbb{D}}\rightarrow 0 \text{ as } n\rightarrow\infty,\quad \text{uniformly for } z\in K.$$ The result follows by (\ref{mapseq}), since $$B_{n-1} \circ \cdots \circ B_1 \circ B_0 \circ \phi_0^{-1} = \phi_n^{-1} \circ f^n.$$
\end{proof}
%
%
%
%
\section{The function $R_A$ defined in a multiply connected Fatou component}
\label{SRAinMC}
The main role of this section is to introduce the function $R_A$, which plays a key role in the proof of Theorem~\ref{Tloops}. Before stating and proving a sequence of lemmas, we outline how these results are used.

Suppose that $U$ is a multiply connected Fatou component which surrounds the origin. We show that if $U$ is sufficiently far from the origin, then we can define a real-valued function $R_A$ which, for each $z\in \overline{U}$, is the largest value of $R$ such that $z \in A_R(f)$; see (\ref{RAdef}) below. It turns out that this function has a close relationship to fundamental loops. Indeed, where defined, $R_A$ is strictly less than $R$ in $H_R$, and is at least equal to $R$ on $L_R$. We then prove that the function $R_A$ has certain continuity properties, and shares level sets with the function $h$ defined in (\ref{hdefinU}). These facts allow us to show that; 
\begin{enumerate}[(a)]
\item on $\intb U$, $R_A$ is equal to its infimum in $U$; 
\item on $\outb U$, $R_A$ is at least equal to its supremum in $U$;
\item $R_A$ does not achieve a maximum or a minimum in $U$.
\end{enumerate}
Because of the close relationship between the function $R_A$ and the definition of fundamental loops, properties (a), (b) and (c) above can then be used to prove Theorem~\ref{Tloops} parts (a), (b) and (c) respectively. Theorems~\ref{TloopsinJF} and \ref{Tloopsandh} then follow quickly.

We start with a simple lemma.
\begin{lemma}
\label{Lfixed}
Suppose that $f$ is a {\tef} and that $A_R(f)$ is a {\spw}. Then $f$ has a fixed point.
\end{lemma}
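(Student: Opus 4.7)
The plan is to apply Theorem~\ref{TBlaschke} directly to the nested sequence of fundamental holes arising from the spider's web structure of $A_R(f)$. Specifically, I would set
\[
G_n = H_{M^n(R)}, \qfor n = 0, 1, 2, \ldots,
\]
so that $\partial G_n = L_{M^n(R)}$. Each $G_n$ is a bounded simply connected domain, since when $A_R(f)$ is a spider's web, the components of its complement are bounded and simply connected; the fundamental hole is the one containing $0$. Moreover, because $A_{M^{n+1}(R)}(f) \subset A_{M^n(R)}(f)$, taking components containing the origin of the respective complements gives $G_n \subset G_{n+1}$.

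Next I would verify the two key hypotheses of Theorem~\ref{TBlaschke}. For the strict containment $\overline{G_n} \subset G_{n+1}$, the point is that $L_{M^n(R)} \subset A_{M^n(R)}(f)$, but every $z \in L_{M^n(R)}$ in fact satisfies $|f^k(z)| \geq M^{n+k}(R)$, whereas points of $L_{M^{n+1}(R)}$ satisfy the stronger bound with $n$ replaced by $n+1$; since $M^{n+k}(R) < M^{n+1+k}(R)$, the loops are disjoint, and $L_{M^n(R)}$ is trapped inside $G_{n+1}$. For the mapping condition $f(\partial G_n) = \partial G_{n+1}$, one direction is immediate from the definition: if $z \in L_{M^n(R)}$, then $|f^{k+1}(z)| \geq M^{n+1+k}(R)$ for all $k \geq 0$, so $f(z) \in A_{M^{n+1}(R)}(f)$; furthermore $|f(z)| \geq M^{n+1}(R)$ rules out $f(z) \in G_{n+1}$. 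Since $L_{M^n(R)}$ is a continuum surrounding the origin, its image under $f$ is also a connected set whose complement has a bounded component containing~$0$; this bounded component must lie in $G_{n+1}$, which forces $f(L_{M^n(R)}) \subset \partial G_{n+1}$. The reverse inclusion $\partial G_{n+1} \subset f(\partial G_n)$ I would obtain from the polynomial-like map structure: the triple $(f, G_n, G_{n+1})$ is polynomial-like in the sense of Douady--Hubbard, and $f : \partial G_n \to \partial G_{n+1}$ is a proper (hence surjective) map of the boundary circles.

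With these properties in hand, Theorem~\ref{TBlaschke} applies and produces a fixed point $\alpha \in G_0 \subset \mathbb{C}$ of $f$, which is exactly what the lemma asserts.

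The main obstacle will be verifying the mapping relation $f(\partial G_n) = \partial G_{n+1}$ cleanly, particularly the surjective direction, which requires identifying $f$ as a proper map between $G_n$ and $G_{n+1}$. This should follow from standard results about fundamental holes in \cite{Rippon01102012}, where it is shown that $f$ restricted to a fundamental hole is a proper map onto a larger fundamental hole. Once this polynomial-like structure is recorded, the conclusion is an immediate application of the section's main theorem.
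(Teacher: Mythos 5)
Your overall strategy---extract the fixed point from the polynomial-like structure of the nested fundamental holes---is the right idea, but the two verifications you offer for the hypotheses of Theorem~\ref{TBlaschke} do not hold up as written. First, the disjointness claim: from the fact that points of $L_{M^n(R)}$ satisfy $|f^k(z)|\geq M^{n+k}(R)$ while points of $L_{M^{n+1}(R)}$ satisfy the stronger inequality, you cannot conclude that the two loops are disjoint, because the stronger bound implies the weaker one; nothing in those inequalities prevents a single point from lying on both loops, so the compact containment $\overline{G_n}\subset G_{n+1}$ is not established. Second, your argument for $f(\partial G_n)\subset\partial G_{n+1}$ only shows $f(z)\in A_{M^{n+1}(R)}(f)$, i.e.\ $f(z)\notin G_{n+1}$; the further step ``the bounded complementary component containing $0$ must lie in $G_{n+1}$, which forces $f(L_{M^n(R)})\subset\partial G_{n+1}$'' is a non sequitur, since a continuum contained in $A_{M^{n+1}(R)}(f)$ and surrounding the origin need not lie on the boundary of the fundamental hole. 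These facts (together with properness and the surjectivity $f(L_\rho)=L_{M(\rho)}$) are exactly the nontrivial content of \cite[Lemma 7.2]{Rippon01102012}, which you end up invoking in your final paragraph anyway; so as it stands your verification is either gapped or simply outsources the hard steps to that lemma.

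Once you do cite \cite[Lemma 7.2]{Rippon01102012}, the detour through Theorem~\ref{TBlaschke} is unnecessary: that lemma already gives that $(f,H_R,f(H_R))$ is a polynomial-like map, and the existence of a fixed point then follows at once from \cite[Lemma 3]{MR1840089}. This is precisely the paper's proof of Lemma~\ref{Lfixed}, and it is also how the proof of Theorem~\ref{TBlaschke} itself produces its fixed point $\alpha$; the extra hypotheses of that theorem (the full nested sequence with $f(\partial G_n)=\partial G_{n+1}$ for every $n$) and its stronger conclusion (hyperbolic convergence to $\alpha$) buy you nothing for the statement at hand.
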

\begin{proof}
Suppose that $H_R$ is a fundamental hole of $f$. By \cite[Lemma 7.2]{Rippon01102012} we have that the triple $(f, H_R, f(H_R))$ is a polynomial-like map. The result follows by \cite[~Lemma 3]{MR1840089}.
\end{proof}
%
%
The following lemma is central to our results.
\begin{lemma}
\label{Lnhd}
Suppose that $f$ is {\tef}. Then there exists $R'=R'(f)>0$ such that the following holds. Suppose that $U$ is a multiply connected Fatou component of $f$, which surrounds the origin and satisfies \upshape\rmfamily~dist\itshape$(0,~U)~>~R'$. Define $G_n$ as the complementary component of $\overline{U_n}$ which contains the origin, for $n=0,1,2,\cdots$. Then
\begin{enumerate}[(a)]
\item $\overline{G_n} \subset G_{n+1}$,  for $n=0,1,2,\cdots$;
\item $f(\partial G_n) = \partial G_{n+1}$, for $n=0,1,2,\cdots$;
\item for all $z \in \overline{U}$ there exists $R = R(z)$ such that $z \in A_R(f)$.
\end{enumerate}
\end{lemma}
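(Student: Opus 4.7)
The plan is to choose $R' = R'(f) > 0$ large enough that, for any multiply connected Fatou component $U = U_0$ satisfying the hypotheses, all of the BRS asymptotics (Lemmas~\ref{LThe1.5}, \ref{L1.3}, \ref{L5.1}) and the nesting from Lemma~\ref{Lbaker} hold starting from $n = 0$. In particular, fixing $z_0 \in U$ and writing $r_n = |f^n(z_0)|$, this yields $a_n \to a < 1$, $b_n \to b > 1$, the containment $B_n \subset U_n$ together with control $|z| \leq r_n^{b_n + o(1)}$ for $z \in \overline{U_n}$ (via Lemma~\ref{L1.3}), and the orbital growth $r_{n+1} \geq M(r_n)^{1 - \delta_n}$, all valid from $n=0$.

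For part (a), since $U_{n+1}$ surrounds $U_n$, the bounded complementary component of $U_{n+1}$ that contains $0$ also contains $U_n$, and hence $G_n$, so $G_n \subset G_{n+1}$. To promote this to $\overline{G_n} \subset G_{n+1}$ I would show $\overline{U_n}\cap\overline{U_{n+1}} = \emptyset$: any $z \in \overline{U_n}$ satisfies $|z| \leq r_n^{b_n + o(1)}$ by the above, whereas any $w \in \overline{U_{n+1}}$ has $|w| \geq r_{n+1}^{\underline{a}_{n+1}}$ by Lemma~\ref{LThe1.5}, and the growth of $r_n$ makes the latter far larger than the former. Since $\partial G_n \subset \overline{U_n}$, this gives the required strict inclusion.

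For part (b), because $U_n$ is bounded and multiply connected, $f : U_n \to U_{n+1}$ is a proper holomorphic map (see the remark after Lemma~\ref{Lbaker}), so it extends continuously to $\overline{U_n} \to \overline{U_{n+1}}$ and carries each connected component of $\partial U_n$ into a single component of $\partial U_{n+1}$. The task is to identify the image of $\partial G_n = \intb U_n$ as $\intb U_{n+1} = \partial G_{n+1}$. For this I would approximate $z \in \intb U_n$ by points $z_k \in U_n$ lying in the annulus $C_n$ of (\ref{Cndef}) and apply Lemma~\ref{L5.1} in the limit, obtaining $\log|f(z)| \leq (1+o(1)) \log M(r_n^{\underline{a}_n})$, which lies at the scale of $\intb U_{n+1}$ and is much smaller than the modulus of $\outb U_{n+1}$ or of any island component of $\partial U_{n+1}$. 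Surjectivity, i.e.\ $\intb U_{n+1} \subset f(\intb U_n)$, then follows from properness combined with the same modulus estimate applied to the preimages.

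For part (c), fix $z \in \overline{U}$. If $z \in U$, Lemma~\ref{mconnlemma} with some $c \in (0,1)$ gives $N$ such that $|f^{n+m}(z)| \geq M^m(|f^n(z)|^c)$ for $n \geq N$, $m\isnatural$, and then
\[
R = \min\bigl\{|z|,\, M^{-1}(|f(z)|),\, \ldots,\, M^{-(N-1)}(|f^{N-1}(z)|),\, M^{-N}(|f^N(z)|^c)\bigr\}
\]
satisfies $|f^n(z)| \geq M^n(R)$ for all $n \isnatural$, so $z \in A_R(f)$; the size of $R'$ guarantees that each $M^{-k}$ is defined at the relevant point and that $M^n(R) \to \infty$. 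If $z \in \partial U$, then $f^n(z) \in \partial U_n$, so $|f^n(z)| \geq r_n^{\underline{a}_n}$ by Lemma~\ref{LThe1.5}, and combining with the growth of $r_n$ (from Lemma~\ref{mconnlemma} applied to $z_0$) and (\ref{Msmalleq}) gives the same conclusion. I expect the main obstacle to be part (b): the set $\intb U_n$ lies just outside the ``good'' annulus $C_n$ where Lemma~\ref{L5.1} applies directly, so the modulus control needed to pin down which component of $\partial U_{n+1}$ receives $\intb U_n$ must be obtained by a careful limiting argument from the interior of $U_n$.
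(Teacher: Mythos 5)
Your overall strategy has a gap at its very first step, and it is the step that carries all the content of parts (a) and (b). Lemmas~\ref{LThe1.5}, \ref{L1.3} and \ref{L5.1} (and the nesting in Lemma~\ref{Lbaker}) are asymptotic statements along the orbit of a \emph{fixed} component $U$, with thresholds depending on $U$, on $z_0$, and in Lemma~\ref{L1.3} on the compact set $C$; nothing in the paper provides a threshold uniform over all multiply connected components, expressible as ``$\operatorname{dist}(0,U)>R'(f)$'', that makes these conclusions valid from $n=0$, and you give no argument for such uniformity. Since (a) and (b) are asserted for \emph{every} $n\geq 0$, an argument valid only for $n$ large (depending on $U$) does not prove them, and, unlike in part (c), the finitely many bad indices cannot be absorbed into a choice of $R$; indeed the statement ``$U_{n+1}$ surrounds $U_n$ from $n=0$'' that you take as part of the setup is essentially part (a) itself. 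The paper's proof is quite different and avoids all asymptotics: it uses that $A_R(f)$ is a \spw{} to produce a fixed point $\alpha$ (Lemma~\ref{Lfixed}), fixes a reference multiply connected component $V$ with $\overline{f^M(V)}\subset A_R(f)$ and $f^M(V)$ surrounding $0$ and $\alpha$, sets $R'=\max\{|z|:z\in\overline{f^M(V)}\}$, and then shows by induction, using the argument principle, the maximum principle and complete invariance of $J(f)$, that $U_k$ surrounds both $f^{M+k}(V)$ and $U_{k-1}$ for all $k$; parts (a) and (c) follow for all $n\geq0$, and (b) follows from $\alpha\in G_n$, $f(\partial G_n)\subset J(f)$ and properness of $f$ on $U_n$. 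Your proposal never uses a fixed point, which is precisely the ingredient that makes the ``for all $n$'' claims accessible.

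Two further steps would fail even for large $n$. First, the bound $|z|\leq r_n^{b_n+o(1)}$ for all $z\in\overline{U_n}$ is not a consequence of Lemma~\ref{L1.3}, which controls images of compact subsets of $U$, not the closure of $U_n$; it is false in general, since $\outb U_n$ can be highly distorted (see the example quoted in the introduction, where $\max\log|z|/\min\log|z|\to\infty$ on $\outb U_n$), so your route to $\overline{U_n}\cap\overline{U_{n+1}}=\emptyset$ breaks down. Second, in part (b), Lemma~\ref{L5.1} is a \emph{lower} bound on $\log|f^m|$, so ``applying it in the limit'' cannot produce the upper bound $\log|f(z)|\leq(1+o(1))\log M(r_n^{\underline a_n})$ on $\intb U_n$ (an upper bound of this kind requires the estimate $|z|\leq r_n^{a_n+o(1)}$ on $\partial U_n\setminus\outb U_n$, which the paper proves separately, and again only for large $n$). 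Moreover, even with correct modulus estimates, modulus alone cannot identify the image component: $\partial U_{n+1}$ may have island components lying between $\innb U_{n+1}$ and the inner circle of $B_{n+1}$, at modulus comparable to $\intb U_{n+1}$, so pinning down $f(\partial G_n)=\partial G_{n+1}$ (and the surjectivity half, which properness alone does not give) needs a topological input such as the fixed point $\alpha\in G_n$ and a winding-number argument --- exactly what the paper supplies.
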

\begin{proof}
First we note \cite[Theorem 1.9 (a)]{Rippon01102012} that $A_R(f)$ is a {\spw}. Hence, by Lemma~\ref{Lfixed}, $f$ has a fixed point $\alpha$.

Let $V$ be a multiply connected Fatou component of $f$. By Lemma~\ref{Lbaker} there is an $N\isnatural$ such that $f^N(V)$ surrounds both the origin and $\alpha$, and also $f^{n+1}(V)$ surrounds $f^n(V)$ for $n\geq N$.

Choose $R>0$ such that $M^n(R)\rightarrow\infty$ as $n\rightarrow\infty$. Then \cite[Theorem~1.2(a)]{Rippon01102012} there is an $L\isnatural$ such that $\overline{f^L(V)} \subset A_R(f)$. Set $M = \max\{L, N\}$, and let $R'~=~\max~\{|z|~:~z\in\overline{f^M(V)} \}.$

Suppose that $U$ is any multiply connected Fatou component such that $U$ surrounds the origin and satisfies \rmfamily dist\normalfont$(0, U) > R'$. Then $U$ certainly surrounds $f^M(V)$. Since $U$ surrounds $\alpha$, then $U_1=f(U)$ surrounds $\alpha$ by the argument principle. Moreover, $U_1$ cannot meet either $f^{M+1}(V)$ or $U$, since $\partial U_1 \subset J(f)$. Hence, by the maximum principle, $U_1$ surrounds both $f^{M+1}(V)$ and $U$. Inductively, $U_{k}$ surrounds both $f^{M+k}(V)$ and $U_{k-1}$, for $k\isnatural$. Parts (a) and (c) of the lemma follow from this fact and the choice of $M$. 

Finally we establish part (b). Choose $n=0,1,2,\cdots$. Since $f(G_n)$ is open and connected, and its boundary is in $J(f)$, it cannot meet the boundary of $G_{n+1}$. Now, $\alpha\in G_n$, and so $f(G_n) \cap G_{n+1} \ne \emptyset$. Hence $\partial f(G_n)$ must lie in $\overline{G_{n+1}}$, and so $$f(\partial G_n)\subset\overline{G_{n+1}}.$$ Moreover, $f$ is a proper map on the Fatou component $U_n$, and so $$f(\partial G_n) \subset f(\partial U_n) = \partial U_{n+1}.$$ Thus $\partial f(G_n) = \partial G_{n+1},$ as required.
\end{proof}
%
%
Suppose that $U$ is a multiply connected Fatou component which surrounds the origin, and that \rmfamily dist\normalfont$(0, U)>R'$, where $R'$ is the constant from Lemma~\ref{Lnhd}. Then, by Lemma~\ref{Lnhd}(c) and by the continuity of $M$, we may define 
\begin{equation}
\label{RAdef}
R_A(z) =  \max \{R : z \in A_R(f)\}, \qfor z \in \overline{U}.
\end{equation}
The function $R_A$ has some strong continuity properties, and shares level sets with the function $h$.
%
%
\begin{lemma}
\label{Llimitedcontandlevel}
Suppose that $f$ and $U$ are as in Theorem~\ref{Tloops}, and that $R_A$ is as in (\ref{RAdef}). Then $R_A$ is {\usc} in $\overline{U}$ and continuous in $U$. Moreover, if $h$ is as in (\ref{hdefinU}), then there exists a continuous strictly increasing function $\phi:\mathbb{R} \to \mathbb{R}$ such that 
\begin{equation}
\label{phidef}
R_A(z) = \phi(h(z)), \qfor z \in U.
\end{equation}
\end{lemma}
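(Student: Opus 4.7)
My plan is to prove the three assertions in turn: upper semicontinuity on $\overline{U}$, continuity on $U$, and the relation $R_A = \phi \circ h$.

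For each $n \geq 0$, set $r_n(z) := M^{-n}(|f^n(z)|)$. The hypothesis $\mathrm{dist}(0,U) > R'$ places the orbit of every $z \in \overline{U}$ inside the domain of $M^{-n}$, so each $r_n$ is continuous there. Since $|f^{n+1}(z)| \leq M(|f^n(z)|)$ automatically and $M^{-1}$ is strictly increasing, the sequence $(r_n(z))_{n\geq 0}$ is non-increasing in $n$; unwinding the definition of $R_A$ yields $R_A(z) = \inf_{n \geq 0} r_n(z) = \lim_{n \to \infty} r_n(z)$. Hence $R_A$ is a monotone decreasing limit of continuous functions, and so is upper semicontinuous on $\overline{U}$.

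To upgrade to continuity on $U$, I apply Lemma~\ref{mconnlemma}: for $z \in U$ and $c \in (0,1)$, applying $M^{-(n+m)}$ to the lemma's conclusion and letting $m \to \infty$ gives
\[
M^{-n}(|f^n(z)|^c) \leq R_A(z) \leq r_n(z), \qfor n \geq N(z,c).
\]
An inspection of the proof of Lemma~\ref{mconnlemma} shows $N(z,c)$ is governed by the quantitative estimates in Lemmas~\ref{L1.3} and~\ref{L5.1}, and so can be chosen uniformly in $z$ on any compact $K \subset U$. Given $z_0 \in U$ and $\varepsilon > 0$, picking $c$ close to~$1$ and $n$ sufficiently large pins $R_A(z_0)$ between two continuous functions each within $\varepsilon$ of it; by continuity these bounds persist on a neighborhood of $z_0$.

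For the relation with $h$, take $z_1, z_2 \in U$ with $h(z_1) = h(z_2)$. Since $h > 0$ and $\log|f^n(z_i)| = h(z_i) \log|f^n(z_0)|(1 + o(1))$, it follows that $\log|f^n(z_1)|/\log|f^n(z_2)| \to 1$, so $|f^n(z_1)| \leq |f^n(z_2)|^{1+|\eta_n|}$ for large $n$ with $|\eta_n| \to 0$. Iterating~(\ref{mu1}) then gives
\[
r_n(z_1) \leq M^{-n}(|f^n(z_2)|^{1+|\eta_n|}) \leq r_n(z_2)^{1+|\eta_n|} \longrightarrow R_A(z_2),
\]
and by symmetry $R_A(z_1) = R_A(z_2)$. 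Hence $R_A$ is constant on each level set of $h$ in $U$, so the well-defined function $\phi$ on $h(U)$ satisfying $R_A = \phi(h)$ inherits continuity from $R_A$ and $h$. Strict monotonicity follows from the analogous sandwich when $h(z_1) < h(z_2)$, which delivers $|f^n(z_1)| \leq |f^n(z_2)|^c$ for some fixed $c < 1$; the iterated form of~(\ref{mu1}), together with $R_A(z_2) > 1$ (arranged by taking $R'$ large enough), forces $R_A(z_1) < R_A(z_2)$. Finally, $\phi$ is extended continuously and strictly monotonically to all of $\mathbb{R}$. The main obstacle is this third stage: carefully propagating the $o(1)$ correction from the definition of $h$ through the iterated concavity-type inequality~(\ref{mu1}), so that the error dissolves in the limit, and then squeezing out strict monotonicity, is the technical heart of the argument.
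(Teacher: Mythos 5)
Your treatment of upper semicontinuity (via $R_A(z)=\lim_{n\to\infty}M^{-n}(|f^n(z)|)$, a decreasing limit of continuous functions) and of continuity in $U$ (a squeeze based on Lemma~\ref{mconnlemma}) is a genuinely different route from the paper, which proves upper semicontinuity directly from the definition and lower semicontinuity by a hyperbolic-metric contradiction argument; your squeeze is workable, but you assert without proof the key point that the lower envelope $M^{-n}(|f^n(z_0)|^c)$ can be made within $\varepsilon$ of $R_A(z_0)$. This needs an argument, and the order of quantifiers matters because $N(K,c)\to\infty$ as $c\to 1$: one way is to iterate (\ref{Msmalleq}) only down to a fixed level $k_0$ with $M^{k_0}(R_A(z_0))>R_0^{1/c}$, giving $M^{-n}(|f^n(z_0)|^c)\geq M^{-k_0}\bigl(M^{k_0}(R_A(z_0))^c\bigr)$ for all $n\geq N(K,c)$, a bound independent of $n$ which tends to $R_A(z_0)$ as $c\to 1$. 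That gap is fillable.

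The genuine flaw is in the strict monotonicity step. Inequality (\ref{mu1}) is stated for exponents $c>1$, and for $c<1$ the concavity of $\log M^{-1}(e^s)$ gives the \emph{opposite} inequality, $M^{-n}(t^c)\geq M^{-n}(t)^c$ (modulo thresholds); so from $|f^n(z_1)|\leq|f^n(z_2)|^c$ you cannot deduce $M^{-n}(|f^n(z_1)|)\leq M^{-n}(|f^n(z_2)|)^c$. Worse, no limiting sandwich of this kind can yield strictness: since $\log M^n(\rho)/\log M^n(\sigma)$ is non-decreasing in $n$ for $\rho>\sigma$ large (by (\ref{Mbigeq})) and in general becomes larger than $1/c$, one has $M^{-n}(|f^n(z_2)|^c)\geq\sigma$ eventually for each $\sigma<R_A(z_2)$, so the middle term of your sandwich tends to $R_A(z_2)$ itself and the limit only gives $R_A(z_1)\leq R_A(z_2)$. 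The repair is the paper's argument: apply Lemma~\ref{mconnlemma} at a single time $N$ with $h(z_1)/h(z_2)<c<1$ and $|f^N(z_2)|^c>|f^N(z_1)|$, obtaining $R_A(z_2)\geq M^{-N}(|f^N(z_2)|^c)>M^{-N}(|f^N(z_1)|)\geq R_A(z_1)$ — i.e.\ the same lower bound you already used for continuity, exploited at a finite time rather than in the limit. Two further points: the claim that $R_A(z_2)>1$ can be ``arranged by taking $R'$ large enough'' is unsubstantiated ($\text{dist}(0,U)>R'$ does not obviously bound $\inf_U R_A$ from below), and iterating (\ref{mu1}) the full $n$ times in your level-set step requires the intermediate values to exceed $\max\{M(R_0),|f(0)|\}$, i.e.\ roughly $R_A(z_2)>R_0$, which is not guaranteed; the paper's device for the analogous difficulty (see the proof of Lemma~\ref{LRAniceinmconn}(a)) is to pass to a forward image $U_N$ and use $R_A(f^N(z))=M^N(R_A(z))$.
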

\begin{proof}
We first prove that $R_A$ is {\usc} in $\overline{U}$. Suppose that $z\in\overline{U}$ and that $\epsilon > 0$. By the definition of $R_A$, we have that $z \notin A_{R_A(z)+\epsilon}(f)$. Hence there is an $N\isnatural$ such that $|f^N(z)| < M^N(R_A(z)+\epsilon)$. By continuity, there exists a $\delta>0$ such that $$|f^N(z')| < M^N(R_A(z)+\epsilon), \qfor z' \in B(z,\ \delta).$$ Hence $R_A(z') < R_A(z)+\epsilon$, for all $z'\in\overline{U}\cap B(z,\ \delta)$. This completes the proof that $R_A$ is {\usc} in $\overline{U}$. \\

To prove that $R_A$ is continuous in $U$ we need to prove that $R_A$ is {\lsc} at $z\in U$. Suppose, to the contrary, that $R_A$ is not {\lsc} at $z$. Then there exists $\epsilon>0$ such that the following holds. If $\Delta \subset U$ is a {\nhd} of $z$, then there is a $z' \in \Delta$ such that $R_A(z) - \epsilon > R_A(z')$, in which case $z' \notin A_{R_A(z)-\epsilon}(f)$. There exists, therefore, a sequence $(z_k)_{k\isnatural}$ of points of $U$, distinct from but tending to $z$, and a sequence $(n_k)_{k\isnatural}$ of integers such that $$|f^{n_k}(z_k)| < M^{n_k}(R_A(z)-\epsilon), \qfor k\isnatural.$$
Hence, for each $k\isnatural$,
\begin{equation*}
|f^{n_k}(z_k)| < M^{n_k}(R_A(z)-\epsilon) < M^{n_k}(R_A(z)) \leq |f^{n_k}(z)|, 
\end{equation*}
which implies that
\begin{equation}
\label{EQ1}
 \frac{\log M^{n_k}(R_A(z))}{\log M^{n_k}(R_A(z)-\epsilon)} < \frac{\log|f^{n_k}(z)|} {\log|f^{n_k}(z_k)|}, \qfor k\isnatural.
\end{equation}
We now establish a contradiction by showing that the right-hand side of (\ref{EQ1}) has an upper bound which tends to $1$ as $k\rightarrow \infty$, but the left-hand side is greater than some $c > 1$, for sufficiently large values of $k$. Note that we can assume that $n_k\rightarrow\infty$ as $k\rightarrow\infty$.

We may assume that $\epsilon$ is sufficiently small that $M^n(R_A(z)-\epsilon) \rightarrow\infty$ as $n\rightarrow\infty$. Hence we can choose $N$ large enough that $M^{N}(R_A(z)-\epsilon) > R_0$, where $R_0$ is the constant from Lemma~\ref{LMtheo}. Set $$r = M^{N}(R_A(z)-\epsilon) \ \text{ and } \ c = \frac{\log M^{N}(R_A(z))}{\log r} > 1.$$ It follows by repeated application of (\ref{Mbigeq}) that we have $$\log M^m(r^c) \geq c \log M^m(r), \qfor m\isnatural.$$ Hence
\begin{equation*}
\frac{\log M^{m+N}(R_A(z))}{\log M^{m+N}(R_A(z)-\epsilon)} = \frac{\log M^{m}(r^c)}{\log M^{m}(r)} \geq c > 1, \qfor m\isnatural.
\end{equation*}
This establishes our claim regarding the left-hand side of (\ref{EQ1}). \\

To establish our claim regarding the right-hand side of (\ref{EQ1}) we use some techniques from \cite{MR1767705}, though we give the full details for completeness. For a hyperbolic domain $V$, we let $\rho_{V}$ denote the density of the hyperbolic metric in $V$.

Choose any $w_1,w_2 \in J(f)$ with $w_1 \ne w_2$, and put $G = \mathbb{C} \backslash \{w_1,w_2\}$. Note that $$[z,z_k]_{U} \geq [f^{n_k}(z),f^{n_k}(z_k)]_{f^{n_k}(U)} \geq [f^{n_k}(z),f^{n_k}(z_k)]_{G} = \int_{\Gamma_k} \rho_G(z) \ |dz|, \qfor k\isnatural,$$ where $\Gamma_k$ is a hyperbolic geodesic in $G$ joining $f^{n_k}(z)$ to $f^{n_k}(z_k)$. By, for example, \cite[Theorem 9.14]{MR1049148}, there exist $R > 2$ and $C > 0$ such that $$\rho_G(z) \geq \frac{C}{|z|\log|z|}, \qfor |z| \geq R.$$

Choose $K$ sufficiently large such that $$|f^{n_k}(z)| > |f^{n_k}(z_k)| > 2R, \qfor k \geq K.$$ We then have
\begin{equation}
[z,z_k]_{U} \geq \int_{\Gamma_k} \rho_G(z) \ |dz| \geq C \int_{|f^{n_k}(z_k)|}^{|f^{n_k}(z)|} \frac{dr}{r \log r} = C\log\left(\frac{\log|f^{n_k}(z)|}{\log|f^{n_k}(z_k)|}\right).
\end{equation}
Hence 
\begin{equation}
\label{logeq}
\frac{\log|f^{n_k}(z)|}{\log|f^{n_k}(z_k)|} \leq \exp([z,z_k]_{U}/C), \qfor k\isnatural.
\end{equation}
As $k\rightarrow\infty$, $z_k\rightarrow z$ and so $[z,z_k]_{U}\rightarrow 0$. Hence the right-hand side of (\ref{logeq}) is indeed bounded above by a term tending to $1$ as $k\rightarrow\infty$. This completes the proof that $R_A$ is continuous in $U$. \\

Finally we need to prove that there exists a real function $\phi$ which satisfies (\ref{phidef}). Our method of proof is as follows. Suppose that $w, z \in U$. We claim that $h(w)~<~h(z)$ if and only if $R_A(w)~<~R_A(z)$. This, combined with the fact that both $h$ and $R_A$ are continuous in $U$, proves that $R_A(z) = \phi(h(z))$, for $z \in U$, where $\phi$ is continuous and strictly increasing.

Let $w, z \in U$. Suppose first that $R_A(w) < R_A(z) = r$, say. Then, there is an $N\isnatural$ such that $$|f^n(z)| \geq M^n(r) > |f^n(w)|, \qfor n\geq N.$$ Assume also that $N$ is sufficiently large that $|f^n(w)| > R_0$, for $n\geq N$, where $R_0$ is the constant in Lemma~\ref{LMtheo}. Set $$c = \frac{\log M^N(r)}{\log |f^N(w)|} >1.$$ Then, by (\ref{Mbigeq}),
\begin{align*}
M^{N+m}(r) = M^m(|f^N(w)|^c) \geq M^m(|f^N(w)|)^c \geq |f^{N+m}(w)|^{c}, \qfor m\isnatural.
\end{align*}
Hence
\begin{align*}
\frac{h(w)}{h(z)} &= \lim_{m\rightarrow\infty} \frac{\log |f^{N+m}(w)|}{\log |f^{N+m}(z)|} \leq \lim_{m\rightarrow\infty} \frac{\log |f^{N+m}(w)|}{\log |M^{N+m}(r)|} \leq \frac{1}{c} < 1,
\end{align*}
and so $h(w) < h(z)$. This completes the first part of the proof. \\

Suppose next that $h(w) < h(z)$. The proof is complete if we can show that $R_A(w)~<~R_A(z)$. Choose $c$ such that $h(w)/h(z) < c < 1$. Choose $N'$ sufficiently large such that $$|f^n(z)|^c~>~|f^n(w)|, \qfor n\geq N'.$$ By Lemma~\ref{mconnlemma}, there exists $N>N'$ such that
\begin{equation*}
|f^{N+m}(z)| \geq M^m(|f^{N}(z)|^{c}), \qfor m\isnatural.
\end{equation*}             
Hence
\begin{equation}
\label{e1}
R_A(f^N(z)) \geq |f^N(z)|^{c} > |f^N(w)| \geq M^N(R_A(w)).
\end{equation}
Set $R = M^{-N}(|f^N(z)|^{c})$, and note that $R>R_A(w)$ by (\ref{e1}). Then $$|f^{N+m}(z)| \geq M^m(|f^N(z)|^{c}) = M^{N+m}(R), \qfor m\isnatural.$$ Hence $R_A(z) \geq R > R_A(w)$ as required. This completes the proof of the lemma.
\end{proof}
%
%
\begin{remark}
\normalfont In fact, with the conditions of Lemma~\ref{Llimitedcontandlevel}, the stronger result holds that $R_A$ is continuous in $\overline{U}\backslash\outb U$. This follows from Lemma~\ref{LRAniceinmconn} below, but is not pertinent to the proofs of the results of this paper.
\end{remark}
We use Lemma~\ref{Llimitedcontandlevel} to prove the following result regarding the values of the function $R_A$ in $\overline{U}$.
\begin{lemma}
\label{LRAniceinmconn}
Suppose that $f$ and $U$ are as in Theorem~\ref{Tloops} and that $R_A$ is as in (\ref{RAdef}). Set 
\begin{equation}
\label{R12def}
R_1 = R_1(U) = \inf_{z \in U} R_A(z) \quad \text{ and } \quad R_2 = R_2(U) = \sup_{z\in U} R_A(z).
\end{equation}
Then
\begin{enumerate}[(a)]
\item $R_A(z) = R_1$, for $z \in\partial U\backslash\outb U$;
\item $R_A(z) \geq R_2$, for $z \in\outb U$;
\item $R_1 < R_A(z) < R_2$, for $z \in U$.
\end{enumerate}
\end{lemma}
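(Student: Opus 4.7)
The plan is to prove (c) via the minimum/maximum principle for harmonic functions, and to obtain (a) and (b) by combining the upper semicontinuity of $R_A$ on $\overline{U}$ (Lemma~\ref{Llimitedcontandlevel}) with a passage-to-the-limit argument exploiting the closedness of the condition defining $A_R(f)$. For (c), $h$ is a non-constant positive harmonic function in the open connected set $U$ by \cite[Theorem~1.2]{2011arXiv1109.1794B}, so by the minimum/maximum principle $h$ does not attain its infimum or supremum at an interior point. Since $R_A=\phi\circ h$ on $U$ with $\phi$ continuous and strictly increasing (Lemma~\ref{Llimitedcontandlevel}), the same holds for $R_A$, giving $R_1<R_A(z)<R_2$ for $z\in U$.

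For the lower bound $R_A(z)\geq R_1$ on all of $\overline{U}$ -- which handles half of both (a) and (b) -- I would fix any $R<R_1$. Every $z'\in U$ satisfies $R_A(z')>R$, hence $z'\in A_R(f)$, so $|f^n(z')|\geq M^n(R)$ for all $n\isnatural$. This is a closed condition in $z'$; so approximating $z\in\overline{U}$ by points of $U$ and using the continuity of each $f^n$ gives $|f^n(z)|\geq M^n(R)$ for all $n$, and hence $R_A(z)\geq R$. Letting $R\uparrow R_1$ gives the claim.

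To prove the reverse inequality $R_A(z)\leq R_1$ in (a), let $z\in\partial U\setminus\outb U$, so $z$ lies on the boundary of some bounded component of $\mathbb{C}\setminus U$. Because $f\colon U\to U_1$ is proper and $f(\partial G_n)=\partial G_{n+1}$ (Lemma~\ref{Lnhd}(b)), a topological argument shows that $f^n(z)$ remains on the boundary of a bounded component of $\mathbb{C}\setminus U_n$ for each $n$. Since the maximal annulus $B_n=A(r_n^{a_n},r_n^{b_n})\subset U_n$ is centred at $0$, every bounded component of $\mathbb{C}\setminus U_n$ must lie in $\overline{B(0,r_n^{a_n})}$, so $|f^n(z)|\leq r_n^{a_n}$. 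Combining this with $|f^n(z)|\geq M^n(R_A(z))$ from the definition of $R_A$ yields $\log M^n(R_A(z))/\log r_n\leq a_n\to a$; but if $R_A(z)>R_1=\phi(a)$, then writing $R_A(z)=R_1^c$ with $c>1$ and iterating Lemma~\ref{LMtheo} shows this ratio is eventually bounded below by a constant strictly greater than $a$, a contradiction. Hence $R_A(z)\leq R_1$ and (a) follows.

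For (b), I would upgrade the passage-to-the-limit argument from $R<R_1$ to $R<R_2$: given such an $R$, choose $z_k'\in U$ with $z_k'\to z$ and $R_A(z_k')>R$, and then pass to the limit as before to conclude $R_A(z)\geq R$. The existence of such $z_k'$ reduces to showing that $\sup_U h=b$ is approached along $\outb U$, which in turn follows by applying Lemmas~\ref{L1.3} and~\ref{L5.1} to points of $U$ close to $\outb U$, whose iterates lie close to $\outb U_n\subset\{w:|w|\geq r_n^{b_n}\}$. The main obstacle I anticipate is making this boundary approximation rigorous and uniform in $n$; once that is in hand, (b) follows by the same limit argument as in (a).
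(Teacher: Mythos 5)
Your part (c) and your lower bound $R_A\geq R_1$ on $\overline{U}$ are correct and essentially the paper's own route: the paper also deduces (c) from the harmonicity of the non-constant function $h$ via the strictly increasing $\phi$ of Lemma~\ref{Llimitedcontandlevel}, and your closedness-of-$A_R(f)$ argument is just the upper semicontinuity step in another guise. The problems are in the upper bound of (a) and, more seriously, in (b). In (a) your contradiction rests on two claims you do not prove: that $R_1=\phi(a)$ (equivalently $\inf_U h=a$, a boundary-behaviour statement about $h$ which does not follow from the lemmas imported in the paper; Lemma~\ref{L1.3} only yields $h\geq a$ on $U$), and that $\log M^n(R_A(z))/\log r_n$ is eventually bounded below by something exceeding $a$, which requires a lower bound of the form $\log M^n(R_A(w))\gtrsim\log|f^n(w)|$ for interior points $w$ -- exactly the content of Lemma~\ref{mconnlemma} (cf.\ Theorem~\ref{limittheo}) -- that your plan never invokes. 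Note also that $a=0$ is allowed by Lemma~\ref{LThe1.5}, and then ``write $R_A(z)=R_1^c$ with $c>1$ and iterate Lemma~\ref{LMtheo}'' gains nothing, so even granting $R_1=\phi(a)$ your contradiction fails as stated. A repair in your spirit: take $w\in U$ with $R_A(w)<R_A(z)$, use Lemma~\ref{mconnlemma} to get $\liminf_n\log M^n(R_A(w))/\log r_n\geq h(w)$, and use $h>a$ on $U$ (minimum principle); then $\liminf_n\log M^n(R_A(z))/\log r_n\geq h(w)>a\geq\limsup_n a_n$, the desired contradiction. The paper instead never evaluates $R_1$ in terms of $a$: it shows $R_A$ is constant on $\partial U\backslash\outb U$ by comparing two boundary points through both bounds $r_n^{\underline{a}_n}\leq|f^n(z)|\leq r_n^{a_n}$ of (\ref{rneq}) and $\underline{a}_n,a_n\to a$, handling the case of small $R_A$ by pushing forward to $U_n$, and gets the value $R_1$ on $\intb U$ by comparison with interior points plus upper semicontinuity.

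Part (b) is where your route genuinely breaks down. You need, at \emph{every} point $z\in\outb U$, points of $U$ arbitrarily close to $z$ with $R_A$ arbitrarily close to $R_2$, i.e.\ $h(w)\to\sup_U h$ as $w\to z$ within $U$ (and already the identification $\sup_U h=b$ is unproved). This pointwise boundary-behaviour statement is not supplied by Lemmas~\ref{L1.3} and~\ref{L5.1}: they control only compact subsets of $U$, with $N$ depending on the compact set, and the annulus in (\ref{L5.1eq}) stops at modulus $r_n^{b_n-2\pi\delta_n}$, short of $\outb U_n$, so ``iterates of points near $\outb U$ lie close to $\outb U_n$'' cannot be extracted from them -- you flag this obstacle yourself, and nothing in the paper fills it. The paper's proof of (b) avoids approximation from inside altogether: for $z\in\outb U$ and an arbitrary $w\in U$, Lemma~\ref{L1.3} places $f^n(w)$ in $C_n\subset B_n$ while $f^n(z)$ lies on the outer boundary of $U_n$, so $|f^n(z)|>|f^n(w)|$ for all large $n$, hence $R_A(z)\geq R_A(w)$, and taking the supremum over $w\in U$ gives $R_A(z)\geq R_2$ directly. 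You should adopt that direct comparison; as proposed, (b) is incomplete precisely at its key step.
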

\begin{proof}
First, suppose that $z \in\intb U$. Choose $w \in U$. By Lemma~\ref{L1.3} applied with $C = \{ w \}$, there exists $N\isnatural$ such that $|f^n(w)| > |f^n(z)|$, for $n\geq N$, in which case $R_A(z) \leq R_A(w)$. Hence $R_A(z) \leq R_1$. Equality follows by the upper semicontinuity of $R_A$ at $z\in\overline{U}$.

We now show, more generally, that $R_A(z) = R_1$ for $z \in \partial U\backslash\outb U$, by showing that $R_A$ is constant on this set. First, suppose that $$R_A(z) > R_0, \qfor z \in \partial U\backslash\outb U,$$ where $R_0$ is the constant from Lemma~\ref{LMtheo}. Suppose also that there exist points $z_1,z_2\in\partial U\backslash\outb U$ with $$R_A(z_1) = R > \rho > R_A(z_2), \qfor \text{some } \rho.$$ Set $c=\log R / \log \rho >1$. Then, for all sufficiently large $n\isnatural$, we have by (\ref{Mbigeq}),
\begin{equation}
\label{eqq}
|f^n(z_2)|^c < M^n(\rho)^c \leq M^n(\rho^c) = M^n(R) \leq |f^n(z_1)|.
\end{equation}
We now observe that, for sufficiently large values of $n$, 
\begin{equation}
\label{rneq}
r_n^{\underline{a}_n} \leq |f^n(z)| \leq r_n^{a_n}, \qfor z\in \partial U\backslash\outb U,
\end{equation}
where $\underline{a}_n$ is as in Lemma~\ref{L1.3}. This fact is in part of the proof of \cite[Theorem 1.6]{2011arXiv1109.1794B}, but we give a brief justification for completeness. Suppose that $K$ is a component of $\partial U\backslash\outb U$ and $\gamma$ is a Jordan curve in $U$ that contains $K$ in its interior \rom{int}$(\gamma)$. For large $n$ we have, by Lemma~\ref{L1.3}, that $f^n(\gamma) \subset C_n \subset B_n$. Hence $$f^n(\rom{\text{int}}(\gamma)) \subset \{z : |z| < r^{b_n} \},$$
and (\ref{rneq}) follows by the definitions of $\underline{a}_n$ and $a_n$, and the fact that $f^n(z) \notin B_n$, for $z\in \partial U\backslash\outb U$.

Now, by Lemma~\ref{LThe1.5}, both $\underline{a}_n$ and $a_n$ tend to $a$ as $n\rightarrow\infty$. Hence, for large values of $n\isnatural$, by (\ref{rneq}),
\begin{equation}
|f^n(z_2)|^c \geq r_n^{c\underline{a}_n} \geq r_n^{a_n} \geq |f^n(z_1)|,
\end{equation}
which is a contradiction to (\ref{eqq}).

Now suppose that $R_A(z) \leq R_0$, for some $z \in \partial U \backslash \outb U$. Let $U_n$, for some $n\isnatural$, be such that $R_A(z) > R_0$, for $z \in \partial U_n \backslash \outb U_n$. Now, $f^n$ is a proper map of $U$ to $U_n$ and $f^n(\partial U\backslash \outb U) = \partial U_n\backslash \outb U_n$. The result follows because $R_A(f^n(z)) = M^n(R_A(z))$ and since, by the above, $R_A$ is constant on $\partial U_n\backslash \outb U_n$. This completes the proof of part (a) of the lemma. \\

Next, suppose that $z \in\outb U$. Choose $w \in U$. By Lemma~\ref{L1.3} applied with $C = \{ w \}$, there exists $N\isnatural$ such that $|f^n(z)| > |f^n(w)|$, for $n\geq N$, in which case $R_A(z) \geq R_A(w)$. Thus $R_A(z) \geq R_2$ and this completes the proof of part (b) of the lemma. \\

Finally, suppose that there exists $z \in U$ such that $R_A(z) = R_2$, in which case $R_A$ achieves a maximum in $U$ at $z$. Then $h$ also achieves a maximum in $U$ at $z$, by Lemma~\ref{Llimitedcontandlevel}. This is a contradiction, because $h$ is harmonic in $U$. For a similar reason, $R_A$ cannot equal $R_1$ and so achieve a minimum in $U$. This completes the proof of the lemma.
\end{proof}
%
%
%
%
\section{Proofs of Theorem~\ref{Tloops}, Theorem~\ref{TloopsinJF} and Theorem~\ref{Tloopsandh}}
\label{Sloops}
In this section we prove Theorem~\ref{Tloops}, and then show that this can be used to prove Theorem~\ref{TloopsinJF} and Theorem~\ref{Tloopsandh}. We begin by proving the following result. Recall that $R_1 = R_1(U) = \inf_{z \in U} R_A(z)$.
\begin{lemma}
\label{LGnotin}
Suppose that $f$ and $U$ are as in Theorem~\ref{Tloops}, and let $G_0$ be the complementary component of $\overline{U}$ containing the origin. Then $G_0 \subset A_{R_1}(f)^c$.
\end{lemma}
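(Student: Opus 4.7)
The plan is to apply Theorem~\ref{TBlaschke} to the sequence $(G_n)_{n\geq 0}$, which satisfies its hypotheses by parts~(a) and~(b) of Lemma~\ref{Lnhd}. This yields a fixed point $\alpha \in G_0$ of $f$ such that, for every $z \in G_0$, $[\alpha, f^n(z)]_{G_n} \to 0$ as $n \to \infty$. The first step is a geometric observation: by Lemma~\ref{LRAniceinmconn}(a), $R_A \equiv R_1$ on $\intb U = \partial G_0$, so $|f^n(w)| \geq M^n(R_1)$ for every $w \in \partial G_0$ and every $n$. Iterating Lemma~\ref{Lnhd}(b) gives $f^n(\partial G_0) = \partial G_n$, so the Jordan curve $\partial G_n$ lies in $\{|z| \geq M^n(R_1)\}$ and surrounds the origin, forcing $B(0, M^n(R_1)) \subset G_n$. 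Moreover $M^n(R_1) \to \infty$, since $R_1 = R_A(w)$ for any $w \in \intb U$ is a valid escape level by the very definition~(\ref{RAdef}); in particular $|\alpha| < M^n(R_1)$ for all large $n$.

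Fix $z \in G_0$ and suppose, for contradiction, that $z \in A_{R_1}(f)$, so $|f^n(z)| \geq M^n(R_1)$ for all $n$. Let $\phi_n : \mathbb{D} \to G_n$ be a Riemann map with $\phi_n(0) = \alpha$, and set $w_n = \phi_n^{-1}(f^n(z))$; Theorem~\ref{TBlaschke}, together with the geometric decay coming from Lemma~\ref{LBla}, gives $|w_n| \to 0$ with $|w_n| \leq C\lambda^n$ for some $\lambda \in (0,1)$. Since $B(\alpha, M^n(R_1) - |\alpha|) \subset B(0, M^n(R_1)) \subset G_n$ for large $n$, Koebe's $1/4$-theorem applied to the conformal map $\phi_n^{-1}$ restricted to this Euclidean disc shows that
\[
\phi_n^{-1}\bigl(B(\alpha, M^n(R_1) - |\alpha|)\bigr) \supset B(0, \tau_n), \qquad \tau_n = \frac{M^n(R_1) - |\alpha|}{4|\phi_n'(0)|}.
\]
If $|w_n| < \tau_n$ for some $n$, then $f^n(z) = \phi_n(w_n) \in B(0, M^n(R_1))$, contradicting $|f^n(z)| \geq M^n(R_1)$.

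To push $|w_n|$ below $\tau_n$, I would combine Koebe's upper bound $|\phi_n'(0)| \leq 4\,\mathrm{dist}(\alpha, \partial G_n)$ with the annular containment $\partial G_n \subset A(r_n^{\underline{a}_n}, r_n^{a_n})$ from Lemma~\ref{LThe1.5}, obtaining $\tau_n \gtrsim M^n(R_1)/r_n^{\underline{a}_n}$. The main obstacle is to show that this ratio is not too small: observe that $r_n^{\underline{a}_n} = \min_{w \in \intb U}|f^n(w)|$, and since $R_A(w) = R_1$ \emph{exactly} for $w \in \intb U$, the definition of $R_A$ in~(\ref{RAdef}) forces $r_n^{\underline{a}_n} < M^n(R_1 + \epsilon)$ to hold for every $\epsilon > 0$ along a subsequence of $n$. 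Extracting from this that the ratio $r_n^{\underline{a}_n}/M^n(R_1)$ stays controlled along a suitable subsequence is the delicate point; combining it with the geometric decay $|w_n| \leq C\lambda^n$ from Lemma~\ref{LBla} should then yield an index $n$ with $|w_n| < \tau_n$, closing the contradiction and proving $G_0 \subset A_{R_1}(f)^c$.
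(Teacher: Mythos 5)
Your opening moves match the paper's: Lemma~\ref{Lnhd}(a),(b) lets you invoke Theorem~\ref{TBlaschke}, and Lemma~\ref{LRAniceinmconn}(a) gives $|f^n(w)|\geq M^n(R_1)$ on $\partial G_0$, hence $B(0,M^n(R_1))\subset G_n$. But the core of your argument --- forcing $|w_n|<\tau_n$ so that $f^n(z)$ lands in $B(0,M^n(R_1))$ --- has a genuine gap, and it is exactly the point you flag as ``delicate''. Your lower bound is $\tau_n\gtrsim M^n(R_1)/r_n^{\underline{a}_n}$, and for the comparison with $|w_n|\leq C\lambda^n$ you would need this ratio to decay no faster than geometrically. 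Nothing in the hypotheses gives that. Knowing $R_A\equiv R_1$ on $\intb U$ only tells you that $M^{-n}(r_n^{\underline{a}_n})\to R_1$; the iterated maximum modulus amplifies any excess over $R_1$ superexponentially (by (\ref{Mbigeq}), $M^n(R_1+\epsilon)\geq M^n(R_1)^c$ with $c>1$), so $r_n^{\underline{a}_n}/M^n(R_1)$ can blow up far faster than $\lambda^{-n}$ even though $r_n^{\underline{a}_n}<M^n(R_1+\epsilon)$ eventually holds for every $\epsilon>0$. So the inequality $|w_n|<\tau_n$ cannot be extracted from what you have, and no choice of subsequence rescues it; the direct ``trap $f^n(z)$ in the Euclidean disc of radius $M^n(R_1)$'' strategy is comparing two quantities on incompatible scales.

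The paper avoids this quantitative trap by never comparing the orbit with $M^n(R_1)$ via the hyperbolic metric. It uses the hyperbolic estimate only crudely: with $r_n=\mathrm{dist}(0,\intb U_n)$, if $|f^n(z_0)|>r_n/2$ infinitely often then the density bound $\rho_{G_n}(w)\geq 1/(2\,\mathrm{dist}(w,\partial G_n))$ keeps $[\alpha,f^n(z_0)]_{G_n}$ bounded away from $0$, contradicting Theorem~\ref{TBlaschke}; hence $|f^N(z_0)|<r_N/2$ for large $N$. The assumption $z_0\in A_{R_1}(f)$ then yields $M^N(R_1)<r_N/2$, and the contradiction is delivered by a second mechanism absent from your proposal: Lemma~\ref{Lepoints} produces a point $z'\in A(r_N/2,3r_N/4)\cap A_{r_N/2}(f)$ lying inside $G_N$, and since $A_{r_N/2}(f)$ has no bounded components, its component through $z'$ must meet $\intb U_N$; pulling back by $f^N$ gives a point of $\intb U$ with $R_A>R_1$, contradicting Lemma~\ref{LRAniceinmconn}(a). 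To complete your proof you would need to replace the $|w_n|<\tau_n$ step with an argument of this kind (or some other input beyond the geometric decay of Lemma~\ref{LBla}); as written, the key step does not go through.
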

\begin{proof}
Suppose, to the contrary, that there exists $z_0 \in G_0$ such that $z_0 \in A_{R_1}(f)$. Recall that $U_n=f^n(U)$, $G_n$ is the component of $\mathbb{C}\backslash\overline{U_n}$ containing the origin, and $\intb U_n = \partial G_n$. Let $r_n~=~\text{\rmfamily dist\normalfont}(0, \intb U_n)$ and let $z_n = f^n(z_0) \in G_n$, for $n\isnatural$.

In view of Lemma~\ref{Lnhd}(a) and (b), we can apply Theorem~\ref{TBlaschke}, with $G_n$ as above and with $K = \{z_0\}$. We obtain that $f$ has a fixed point $\alpha\in G_0$ such that $$[\alpha, z_n]_{{G_n}} \rightarrow 0 \text{ as } n\rightarrow\infty.$$

We claim that there exists $N\isnatural$ such that $|z_n| < r_n/2$ for $n\geq N$. Suppose, to the contrary, that $|z_n| > r_n/2$ infinitely often. For these values of $n$, let $\gamma_n$ be a curve in $G_n$ joining $\alpha$ and $z_n$ such that $$2[\alpha, z_n]_{{G_n}} \geq \int_{\gamma_n} \rho_{G_n}(w) |dw|.$$ Recall (for example, \cite[Theorem 4.3]{MR1230383}) that $$\rho_{G_n}(w) \geq \frac{1}{2 \ \text{\rmfamily dist\normalfont}(w, \partial G_n)},\qfor w \in G_n.$$

We can assume that $n$ is sufficiently large that $|\alpha| < r_n/4$. Let $$\gamma_n' = \gamma_n \cap B(0, r_n/2).$$ Note that \rmfamily dist\normalfont$(w, \partial G_n) \leq 2r_n$, for $w \in \gamma_n'$. Moreover the length of $\gamma_n'$ is certainly at least equal to $r_n/4$. Hence $$2[\alpha, z_n]_{{G_n}} \geq \int_{\gamma_n'} \rho_{G_n}(w) |dw| \geq  \frac{1}{4r_n} \int_{\gamma_n'} |dw| \geq \frac{1}{4r_n} \frac{r_n}{4} = \frac{1}{16},$$ which is a contradiction. Thus our claim is established. \\

We now set $\eta = 3/2$. By Lemma~\ref{Lepoints} and the above, there exists $N\isnatural$ such that the following conditions both hold. Firstly, there exists $z' \in A({r_N}/2, 3{r_N}/4)$ such that $z' \in A_{r_N/2}(f)$. Secondly, $|z_N| < r_N/2$. 

We have supposed that $z_0 \in A_{R_1}(f)$, and so this second condition implies that $M^N(R_1)~<~r_N/2$. Suppose that there exists $w \in \intb U_N \cap A_{r_N/2}(f)$. Since $w = f^N(w')$, for some $w' \in \intb U$, then $$w' \in A_{M^{-N}(r_N/2)}(f).$$
This is impossible since $M^{-N}(r_N/2) > R_1$, but $R_A(w') = R_1$, by Lemma~\ref{LRAniceinmconn}(a). Hence we have that $\intb U_N \cap A_{r_N/2}(f) = \emptyset$. This is a contradiction because $\intb U_N$ surrounds $z'$, but $A_{r_N/2}(f)$ has no bounded components \cite[Theorem 1.1]{Rippon01102012}. 
\end{proof}
%
%
We now prove Theorem~\ref{Tloops}.
\begin{proof}[Proof of Theorem~\ref{Tloops}]
First we let $R'$ be the constant from Lemma \ref{Lnhd}. Suppose that $U$ is a multiply connected Fatou component of $f$, such that $U$ surrounds the origin and \rmfamily dist\normalfont$(0, U) \geq R'$. Let $R_1=R_1(U)$ and $R_2=R_2(U)$ be the constants from (\ref{R12def}). Part (a) of the theorem, that $\intb U$ is the fundamental loop $L_{R_1}$, follows because $\intb U \subset A_{R_1}(f)$, by Lemma~\ref{LRAniceinmconn}(a), but the bounded component of $\mathbb{C}\backslash\intb U$ is in $A_{R_1}(f)^c$, by Lemma~\ref{LGnotin}. \\

Part (b) of the theorem, that $\outb U$ is the fundamental loop $L_{R_2}$, follows immediately from Lemma~\ref{LRAniceinmconn}(b) and (c). \\

Finally we prove part (c) of the theorem. Suppose that $L_R$ is a fundamental loop, and that $z \in L_R \cap U$. Now $z \in A_R(f)$, and so $R_A(z) \geq R$. Moreover, $R_A(w) < R$, for $w \in H_R \cap U$. Hence, by the continuity of $R_A$ in $U$, $R_A(z) = R$. Thus, by Lemma~\ref{LRAniceinmconn}(c), $R_1 < R_A(z) = R < R_2$. 

Recall that $R_A(w) = R_1$, for $w \in \partial U\backslash\outb U$. It follows, by the upper semicontinuity of $R_A$ in $\overline{U}$, that $L_R\cap\partial U\backslash\outb U = \emptyset$. 

It remains to show that $L_R \cap \outb U = \emptyset$. Suppose, to the contrary, that $L_R$ intersects $\outb U$. We note \cite[Lemma 7.2 (c)]{Rippon01102012} that, in general, if $L_\rho$ is a fundamental loop then $f(L_\rho) = L_{M(\rho)}$. By Lemma~\ref{L1.3}, applied to any closed subset of $L_R \cap U$, there exists $N\isnatural$ such that $L_{M^n(R)} \cap C_n \ne \emptyset$, where $C_n$ is the annulus defined in (\ref{Cndef}), for $n\geq N$. 

Next choose $\eta > 1$. We can assume that $N$ is sufficiently large that, for $n \geq N$ and $z \in \overline{U_n}$, we have that $|z| > \max\{R_0, R_0'\}$, where $R_0$ is the constant from Lemma~\ref{LMtheo} and $R_0'$ is the constant from Lemma~\ref{Lepoints}. We can also assume that $N$ is sufficiently large that the conclusions of Lemma~\ref{L5.1} can be applied.

Define $c_n = b_n - 2\pi\delta_n - \delta_n^2,$ for $n\isnatural.$ We can further assume that $N$ is sufficiently large that we have both $$b_N(1-3\pi\delta_N) < c_N < b_N - 2\pi\delta_N$$ and $$\eta r_{N}^{ b_{N}(1-3\pi\delta_{N})} < r_{N}^{c_N(1-\delta_N)}.$$

The first inequality is easy to satisfy since, by Lemma~\ref{LThe1.5}, $b_n\rightarrow b > 1$, as $n\rightarrow\infty$. The second can be satisfied since $$\eta r_{n}^{ b_{n}(1-3\pi\delta_{n})} = r_{n}^{b_{n}(1-3\pi\delta_{n}) + \log \eta \ \delta_n^2},$$ and $$c_n(1-\delta_n) = b_n(1 - (2\pi/b_n+1)\delta_n) + \delta_n^2(2\pi-1+\delta_n),$$ and since $\delta_n\rightarrow 0$ and $b_n\rightarrow b > 1$ as $n\rightarrow\infty$. \\

Consider the fundamental loop $L_{{M^{N}}(R)}$. Since $L_{M^{N}(R)} \cap C_{N} \ne \emptyset$, there is a point on $L_{M^{N}(R)}$ of modulus less than $r_{N}^{b_{N}(1-3\pi\delta_{N})}$. Hence 
\begin{equation}
\label{aneq}
M^{N}(R) < r_{N}^{b_n(1-3\pi\delta_{N})}.
\end{equation}

Moreover, by assumption we have that $L_{M^{N}}(R) \cap \outb U_N  \ne \emptyset$. Hence $L_{M^{N}(R)}$ surrounds points in $U_{N}$ which lie at all radii in $(r_{N}^{b_{N}(1-3\pi\delta_{N})}, r_{N}^{b_{N}})$. In particular, there exists a point 
\begin{equation}
\label{zisinHn}
z \in H_{M^{N}(R)} \cap U_N, \quad \text{ such that } |z| = r_{N}^{c_N}.
\end{equation}
Then, by Lemma~\ref{L5.1}, Lemma~\ref{LMtheo} and Lemma~\ref{Lepoints}, we have that, for $m\isnatural$,
\begin{align*}
|f^m(z)| &\geq M(r_N^{c_N}, f^m)^{1-\delta_N} \\ 
         &\geq M(r_N^{c_N(1-\delta_N)},f^m) \geq  M(\eta r_{N}^{ b_{N}(1-3\pi\delta_{N})} ,f^m) \\
         &\geq M^m(r_{N}^{b_{N}(1-3\pi\delta_{N})},f).
\end{align*}         
Hence $z \in A_\rho(f)$, where $\rho = r_{N}^{b_{N}(1-3\pi\delta_{N})}$. This is in contradiction to (\ref{aneq}), since $z\notin A_{M^N(R)}(f)$ by (\ref{zisinHn}). This completes the first half of the proof of part (c). \\

Finally, suppose that $R_1 < R < R_2$. Then, by the continuity of $R_A$ and the definitions of $R_1$ and $R_2$, there exists $z \in U$ such that $R_A(z) = R$. Hence the fundamental loop $L_R$ must intersect $U$, and so $L_R \subset U$. This completes the proof.
\end{proof}
%
%
Next we prove Theorem~\ref{TloopsinJF}, which states that if $f$ is a {\tef} and that $L_R$ is a fundamental loop of $f$, then either $L_R \subset F(f)$ or $L_R~\subset~J(f)$.
\begin{proof}[Proof of Theorem~\ref{TloopsinJF}]
Suppose first that $z\in L_R \ \cap \ U$, where $U$ is a simply connected Fatou component of $f$. Since $L_R~\subset A_R(f)$, it follows from \cite[Theorem~1.2(b)]{Rippon01102012} that $\overline{U} \subset A_R(f)$. This is a contradiction since $L_R = \partial H_R$ and $H_R \subset A_R(f)^c$. Hence $L_R$ cannot intersect any simply connected Fatou component of $f$.

Next suppose that $z \in L_R \ \cap \ U$, where $U$ is a multiply connected Fatou component of $f$. Then there exists $N\isnatural$ such that \rmfamily dist\normalfont$(0, U_N) > R'$, where $R'$ is the constant from Theorem~\ref{Tloops} and $U_N=f^N(U)$. Then $f^N(L_R)=L_{M^N(R)}$ is a fundamental loop which intersects $U_N$ and so, by Theorem~\ref{Tloops}, is contained in $U_N$. The result follows.
\end{proof}
%
%
Finally we prove Theorem~\ref{Tloopsandh}, which relates fundamental loops lying in $U$ to level sets of $h$.
\begin{proof}[Proof of Theorem~\ref{Tloopsandh}]
First suppose that $L_R\subset U$ is a fundamental loop. Then, because of the continuity of $R_A$ in $U$, we have $R_A(z) = R$, for $z\in L_R$. Hence, by Lemma~\ref{Llimitedcontandlevel}, $h$ is also constant on $L_R$. This completes the first part of the proof.

Suppose next that $\Gamma$ is a level set of $h$. By Lemma~\ref{Llimitedcontandlevel}, $\Gamma$ is also a level set of $R_A$, and so $R_A(z) =R$, say, for $z \in \Gamma$. Now $R_1 < R < R_2$, where $R_1$ and $R_2$ are as in (\ref{R12def}), and so, by Theorem~\ref{Tloops}, there is a fundamental loop $L_R \subset U$. The result follows, since $L_R \subset \Gamma$, again by Lemma~\ref{Llimitedcontandlevel}.
\end{proof}
%
%
%
%
\section{The function $R_A$ defined in $\mathbb{C}$}
\label{SRA}
The function $R_A$ played a key role in proving Theorem~\ref{Tloops}. In general, however, $R_A(z)$ cannot be defined for many values of $z\in A(f)$; consider, for example, $f(z) = e^z$ and $z = \log 2\pi + i\pi/2$. In this section we show that, with a certain normalisation of $f$, the definition of $R_A(z)$ can be extended in a natural way to all $z\iscomplex$. The function $R_A$ then has several interesting properties.

First we adopt the normalisation $f(0) = 0$. We observe that, by Lemma~\ref{Lfixed}, all {\tef}s for which $A_R(f)$ is a {\spw} have a fixed point and so, in this case, this normalisation is merely a change of coordinates. This suggests that this normalisation is not entirely unnatural when $A_R(f)$ is a {\spw}. Even when $f$ does not have a fixed point the normalisation $f(0) = 0$ is not as limiting as it might seem. If $f(0) \ne 0$, then we choose $\alpha$, a fixed point of $f^2$, and replace $f$ by $g$ where $g(z) = f^2(z+\alpha)-\alpha$. Then $g(0) = 0$, and the sets $A(f)$ and $A(g)$ differ only by a translation, since \cite[Theorem 2.6]{Rippon01102012} we have $A(f^2) = A(f)$.

With the normalisation $f(0) = 0$, we can define 
\begin{equation}
\label{Rfdef}
R_f = \max \{ r\geq 0 : M^n(r) \nrightarrow \infty \text{ as } n \rightarrow \infty\}.
\end{equation}
The following gives an alternative characterisation of $A(f)$ as a continuous limit of the closed sets $A_R(f)$.
\begin{theo}
\label{Tnewdef}
Suppose that $f$ is a {\tef}, that $f(0) = 0$, and that $R_f$ is as defined in (\ref{Rfdef}). Then
\begin{equation}
\label{AwithAR}
A(f) = \bigcup_{R>R_f} A_R(f).
\end{equation}
\end{theo}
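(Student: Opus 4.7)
The plan is to prove the two inclusions separately. The inclusion $\bigcup_{R > R_f} A_R(f) \subset A(f)$ is immediate: for any $R > R_f$, the definition of $R_f$ gives that $M^n(R) \to \infty$, so every $z \in A_R(f)$ satisfies $|f^n(z)| \geq M^n(R)$ for $n\isnatural$, which is exactly the definition (\ref{Adef}) of membership in $A(f)$ taken with $\ell = 0$.

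For the reverse inclusion, I would take $z \in A(f)$, so by (\ref{Adef}) there exist $\ell \isnatural$ and $R > 0$ with $M^n(R) \to \infty$ and $|f^{n+\ell}(z)| \geq M^n(R)$ for all $n\isnatural$. The normalisation $f(0) = 0$ enters crucially here: since $f$ is non-constant and vanishes at the origin, the maximum modulus principle implies that $M$ is a strictly increasing continuous bijection of $[0,\infty)$ onto itself, so $M^{-\ell}$ is well-defined on all of $[0,\infty)$. I would then set $R' = M^{-\ell}(R)$ and aim to show that $R' > R_f$ and that $z \in A_{R'}(f)$.

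The condition $R' > R_f$ follows immediately from $M^n(R') = M^{n-\ell}(R) \to \infty$ as $n \to \infty$, since (using monotonicity of $M$) $\{r : M^n(r) \nrightarrow \infty\}$ is an interval $[0, R_f]$. To check that $z \in A_{R'}(f)$, I would verify the inequality $|f^n(z)| \geq M^n(R')$ in two ranges. For $n \geq \ell$ it is immediate, since $M^n(R') = M^{n-\ell}(R) \leq |f^{n}(z)|$ by hypothesis. For $0 \leq n < \ell$, I would invoke the elementary bound $|f^k(w)| \leq M^k(|w|)$ applied to $w = f^n(z)$ and $k = \ell - n$, to deduce that $M^{\ell - n}(|f^n(z)|) \geq |f^\ell(z)| \geq R$; applying the well-defined $M^{-(\ell-n)}$ to both sides gives $|f^n(z)| \geq M^{n-\ell}(R) = M^n(R')$, as required.

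The main subtlety, rather than a genuine obstacle, is ensuring that each backward iterate $M^{-k}(R)$ makes sense along the way; this is exactly what the normalisation $f(0) = 0$ buys. Without it, $M^{-1}$ would only be defined on $[|f(0)|, \infty)$ and a direct backward iteration could leave its domain, which is why the preamble to the theorem is careful to argue that assuming $f(0)=0$ is not genuinely restrictive.
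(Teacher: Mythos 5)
Your proof is correct and follows essentially the same route as the paper: the forward inclusion is immediate from the definition of $R_f$, and for the reverse inclusion both arguments use the normalisation $f(0)=0$ to make $M^{-\ell}$ globally defined and set $R'=M^{-\ell}(R)$, concluding $R'>R_f$ and $z\in A_{R'}(f)$. The only difference is that you spell out the verification of $|f^n(z)|\geq M^n(R')$ for $0\leq n<\ell$, which the paper leaves implicit; this is a welcome detail, not a departure.
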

\begin{proof}
If $z \in \bigcup_{R>R_f} A_R(f)$, then $z \in A_R(f)$ for some $R$ such that $M^n(R)\rightarrow\infty$ as $n\rightarrow\infty$, and so $z \in A(f)$ by definition.

Now, suppose that $z \in A(f)$. Then, by (\ref{Adef}), $f^\ell(z) \in A_R(f)$, for some $R>R_f$ and some $\ell\isnatural$. Note next that, since $f(0) = 0$, we have that $M^{-n}(r)$ is defined for all $r\geq 0$ and $n\isnatural$. Hence we can set $R' = M^{-\ell}(R)$, and we note that $R' > R_f$. Then $z \in A_{R'}(f)$ and so $z \in \bigcup_{R>R_f} A_R(f)$, as required. 
\end{proof}
For a {\tef} $f$ with $f(0) = 0$, we extend the definition of $R_A$ to the whole complex plane by setting
\begin{equation}
\label{fullRA}
R_A(z) =
  \begin{cases}
   \max \{R : z \in A_R(f)\}, &\qfor z \in A(f), \\
   R_f, &\qfor z \notin A(f).
  \end{cases}
\end{equation}
The existence of the maximum, for $z\in A(f)$, follows from (\ref{AwithAR}) and the continuity of $M$. Note that it follows from (\ref{fullRA}) that
\begin{equation}
\label{MinvertsRA}
M(R_A(z)) = R_A(f(z)), \qfor z\in A(f). 
\end{equation}
If $f$ satisfies the normalization $f(0) = 0$, then a stronger version of Lemma~\ref{Llimitedcontandlevel} holds.
\begin{theo}
\label{TRAcont}
Suppose that $f$ is a transcendental entire function and that $f(0)~=~0$. Then
\begin{enumerate}[(a)]
\item $R_A$ is {\usc} in $\mathbb{C}$;
\item $R_A$ is nowhere continuous in $A(f) \cap J(f)$;
\item $R_A$ is constant in a simply connected Fatou component of $f$;
\item $R_A$ is continuous in $A(f)^c \cup F(f)$.
\end{enumerate}
\end{theo}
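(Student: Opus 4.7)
The plan is to treat the four parts using three ingredients: the characterisation $A(f)=\bigcup_{R>R_f}A_R(f)$ from Theorem~\ref{Tnewdef}, the fact that each $A_R(f)$ is closed, and the blow-up property from Lemma~\ref{Lblow}. I use throughout that $A(f)$ is completely invariant under $f$, which is immediate from (\ref{Adef}).

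Part (a) follows the upper semicontinuity argument in the proof of Lemma~\ref{Llimitedcontandlevel}: for any $z\iscomplex$ and $\epsilon>0$, the super-level set $\{w:R_A(w)\ge R_A(z)+\epsilon\}$ is either empty or coincides with the closed set $A_{R_A(z)+\epsilon}(f)$, so $z$ belongs to its open complement.

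Part (c) is immediate from \cite[Theorem~1.2(b)]{Rippon01102012}, already used in Theorem~\ref{TloopsinJF}: if a simply connected Fatou component $U$ meets some $A_R(f)$, then $\overline{U}\subset A_R(f)$. Applied with $R=R_A(z)$ for an arbitrary $z\in U\cap A(f)$, this forces $R_A$ to be constant on $U$; while if $U\cap A(f)=\emptyset$, then $R_A\equiv R_f$ on $U$ by (\ref{fullRA}). Part (d) combines three local arguments. At $z\in A(f)^c$, the uniform lower bound $R_A\ge R_f$ together with $R_A(z)=R_f$ and part (a) give $R_A(w)\to R_A(z)$ as $w\to z$. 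At $z\in F(f)$, openness of the Fatou set localises matters to the component $U\ni z$: if $U\cap A(f)=\emptyset$ then $R_A\equiv R_f$ on $U$; if $U$ is simply connected and meets $A(f)$ we invoke (c); and if $U$ is multiply connected then Lemma~\ref{Llimitedcontandlevel} provides continuity of $R_A$ on some iterate $U_n$ to which the hypothesis applies, and the identity (\ref{MinvertsRA}) together with continuity of $f^n$ transfers continuity back to $U$.

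Part (b) is the main point. Let $z\in A(f)\cap J(f)$. By Theorem~\ref{Tnewdef}, $R_A(z)>R_f$. Pick a repelling periodic point $w_0\in J(f)\setminus E(f)$, which exists since $f$ admits infinitely many repelling periodic points and $|E(f)|\le 1$; since $(f^n(w_0))$ is bounded, $w_0\notin A(f)$. Given any neighbourhood $\Delta$ of $z$, Lemma~\ref{Lblow} with $K=\{w_0\}$ yields $N\isnatural$ and $w\in\Delta$ with $f^N(w)=w_0$; complete invariance gives $w\notin A(f)$, so $R_A(w)=R_f<R_A(z)$. Hence $R_A$ fails to be lower semicontinuous at $z$. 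The main technical obstacle, modest as it is, is securing a suitable target point in $A(f)^c\setminus E(f)$ for the blow-up lemma; the existence of non-exceptional repelling periodic points settles this.
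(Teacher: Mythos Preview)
Your proof is correct, and parts (a), (c), and (d) match the paper's argument closely. The one genuine difference is in part (b).

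For part (b), the paper argues as follows. Given $z\in A(f)\cap J(f)$ with $z\notin E(f)$, it applies Lemma~\ref{Lblow} with $K=\overline{\Delta}$ (for a small enough neighbourhood $\Delta$ of $z$) to obtain $n>1$ with $f^n(\Delta)\supset\overline{\Delta}$, hence a point $z'\in\Delta$ with $f^n(z')=z$ and $z'\ne z$ (since $z$, being in $A(f)$, cannot be periodic). Then $R_A(z')=M^{-n}(R_A(z))<R_A(z)$. The case $z\in E(f)$ is handled separately by passing to $f(z)$. Thus the paper exhibits, near $z$, points which are still in $A(f)$ but have a strictly smaller $R_A$-value.

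Your route instead targets a fixed non-exceptional repelling periodic point $w_0\notin A(f)$ and uses Lemma~\ref{Lblow} with $K=\{w_0\}$ to pull back a preimage $w\in\Delta$ with $R_A(w)=R_f$. This is a legitimate alternative and in one respect cleaner: since the hypothesis $K\cap E(f)=\emptyset$ concerns only the target set, you need no case split on whether $z$ itself is exceptional. On the other hand, the paper's argument yields slightly more information, namely that every neighbourhood of $z$ contains points of $A(f)$ with $R_A$-value at most $M^{-1}(R_A(z))$, a quantitative drop within $A(f)$ rather than merely a point outside $A(f)$.
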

\begin{proof}
Part (a) follows in exactly the same way as the first part of the proof of Lemma~\ref{Llimitedcontandlevel}, and so we omit the details.

Now we prove part (b). Observe that, in general, if $w\in A(f)$, $n>1$ and $f^n(w') = w$, then 
\begin{equation}
\label{RAineq}
R_A(w') = M^{-n}(R_A(w)) < M^{-1}(R_A(w)) < R_A(w).
\end{equation}

Suppose that $z\in A(f)\cap J(f)$ and assume first that $z\notin E(f)$. Let $\Delta$ be a {\nhd} of $z$, sufficiently small that $\overline{\Delta}\cap E(f) = \emptyset$. Then, by Lemma~\ref{Lblow}, there is an $n>1$ such that $f^n(\Delta) \supset \overline{\Delta}$. Hence, since $z$ cannot be periodic, there is a $z' \in \Delta$ with $z' \ne z$ and such that $f^n(z') = z$. Hence, by (\ref{RAineq}), 
\begin{equation}
\label{ineqforz}
R_A(z') < M^{-1}(R_A(z)) < R_A(z).
\end{equation}
This shows that $R_A$ is not continuous at $z$ in the case that $z \notin E(f)$, since $\Delta$ was arbitrary.

In the case that $z\in E(f)$, we first observe that $f(z)\notin E(f)$. Let $\Delta$ be a {\nhd} of $z$, sufficiently small that $\overline{f(\Delta)}\cap E(f) = \emptyset$. By the same argument as above, there is a $z' \in \Delta$ such that 
\begin{equation}
\label{ineqforz2}
R_A(f(z')) < M^{-1}(R_A(f(z))) < R_A(f(z)).
\end{equation}
Equation (\ref{ineqforz}) now follows from (\ref{ineqforz2}) and (\ref{MinvertsRA}). This completes the proof of part~(b).

Next we prove part (c). Suppose that $U$ is a simply connected Fatou component and that $U \cap A(f) = \emptyset$. Then $R_A(z) = R_f$, for $z \in U$. On the other hand, suppose that $z~\in~U~\cap~A_R(f)$, for some $R>R_f$. Then $\overline{U} \subset A_R(f)$ \cite[Theorem 1.2(b)]{Rippon01102012}. This completes the proof of part (c).

Finally we prove part (d). The result when $z \in A(f)^c$ is immediate from part~(a), and the fact that $R_A$ achieves its global minimum of $R_f$ everywhere in $A(f)^c$. If $z~\in~A(f)~\cap~F(f)$, then we can assume that $z$ is in a multiply connected Fatou component of $f$, and the proof follows in exactly the same way as the second part of the proof of Lemma~\ref{Llimitedcontandlevel}.
\end{proof}
In a multiply connected Fatou component, we can say more about the properties of the function $R_A$.
\begin{theo}
\label{TRAsubh}
Suppose that $f$ is a transcendental entire function and that $f(0)~=~0$. Then the function $v = -\log R_A$ is subharmonic in $F(f)$.
\end{theo}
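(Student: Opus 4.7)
The approach is to verify subharmonicity locally, one Fatou component at a time. Since Theorem~\ref{TRAcont}(d) makes $v=-\log R_A$ continuous throughout $F(f)$, the upper semicontinuity requirement is automatic, and only the sub-mean inequality requires attention. On a simply connected Fatou component, or on any Fatou component disjoint from $A(f)$, $R_A$ is constant by Theorem~\ref{TRAcont}(c), so $v$ is constant and trivially subharmonic. The substantive case is a multiply connected Fatou component $U$. Here $U\subset A(f)$, a well-known consequence of Lemma~\ref{Lbaker} noted in the proof of Lemma~\ref{mconnlemma}. Moreover, since $0$ is a fixed point of $f$ while $U_n\to\infty$, no $U_n$ can contain $0$, hence $f^n$ never vanishes on $U$ and $\log|f^n|$ is harmonic in $U$ for each $n\isnatural$.

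The first main step is a representation of $R_A$ on $A(f)$ as a decreasing infimum. Set $T_n(z):=M^{-n}(|f^n(z)|)$, which is well defined on $\mathbb{C}$ because $f(0)=0$ puts $M^{-1}$ onto $[0,\infty)$. The equivalence $|f^n(z)|\geq M^n(R)\iff T_n(z)\geq R$, together with the definition of $A_R(f)$, gives
\[
R_A(z)=\inf_{n\geq 1} T_n(z),\qfor z\in A(f).
\]
The inequality $|f^{n+1}(z)|\leq M(|f^n(z)|)$ and the monotonicity of $M^{-(n+1)}$ show $T_{n+1}\leq T_n$ pointwise, so this infimum is in fact a decreasing pointwise limit.

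The second main step is to prove that each $\log T_n$ is superharmonic in $U$ and then pass to the limit. Introduce $\Phi_n(s):=\log M^{-n}(e^s)$; by the statement preceding (\ref{mu1}), $\log M^{-1}(e^s)$ is concave and increasing, and composition preserves these properties, so $\Phi_n$ is concave and increasing. Since $\log T_n=\Phi_n\circ \log|f^n|$ and $\log|f^n|$ is harmonic in $U$, for any closed disc $\overline{B(z_0,r)}\subset U$ Jensen's inequality applied to the mean value of $\log|f^n|$ yields
\[
\log T_n(z_0)=\Phi_n\!\left(\tfrac{1}{2\pi}\!\int_0^{2\pi}\!\log|f^n(z_0+re^{i\theta})|\,d\theta\right)\geq \tfrac{1}{2\pi}\!\int_0^{2\pi}\!\log T_n(z_0+re^{i\theta})\,d\theta,
\]
which is the sub-mean inequality for $-\log T_n$. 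Since $(-\log T_n)$ is an increasing sequence with pointwise supremum $v=-\log R_A$, monotone convergence transfers the sub-mean inequality to $v$; together with the continuity of $v$ from Theorem~\ref{TRAcont}(d), this gives subharmonicity of $v$ on $U$ and completes the argument.

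I expect the principal subtlety to be precisely this composition step: the maximum modulus $M$ need not be smooth, so one cannot compute a Laplacian of $\log T_n$ directly. The argument circumvents this by working entirely at the level of Jensen's inequality and circle averages, using only concavity of $\Phi_n$ together with the mean value property of the harmonic function $\log|f^n|$.
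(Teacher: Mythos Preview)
Your proof is correct and follows essentially the same route as the paper: reduce to a multiply connected component $U$, show each $v_n=-\log M^{-n}(|f^n|)$ is subharmonic there (as a convex increasing function composed with the harmonic $\log|f^n|$), note $v_n\uparrow v$ by the decreasing-limit representation of $R_A$, and pass to the limit using continuity of $v$ on $F(f)$. The only cosmetic differences are that the paper packages the subharmonicity of $v_n$ via the standard ``convex of harmonic'' lemma and then invokes the Brelot--Cartan theorem together with $v^*=v$, whereas you argue the sub-mean inequality directly with Jensen and monotone convergence; these are equivalent implementations of the same idea.
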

\begin{remarkk}
\normalfont 
It follows from Theorem~\ref{TRAcont}(d) and Theorem~\ref{TRAsubh} that $1/R_A$ is in the class PL in each component of $F(f)$. Here (see \cite{MR1501707}), a function $u$ in a domain $D$ is said to be in the class PL if $u$ is continuous and non-negative in $D$, and $\log u$ is subharmonic in the part of $D$ where $u > 0$. This class is a generalisation of functions of the form $|\phi|$, where $\phi$ is analytic in $D$. The weaker result that $1/R_A$ is subharmonic in $F(f)$ also follows from Theorem~\ref{TRAsubh}, since $1/R_A(z) = \exp(v(z))$ and by \cite[Corollary 2.6.4]{MR1334766}.
\end{remarkk}
\begin{remarkk}
\normalfont It seems natural to ask if $v$ is harmonic in $F(f)$. This cannot be the case in general. For, by the last statement of Lemma~\ref{Llimitedcontandlevel}, if $v$ is harmonic in a multiply connected Fatou component $U$ which satisfies the conditions of Theorem~\ref{Tloops}, then there is a continuous function $\psi: \mathbb{R}\to\mathbb{R}$ such that
\begin{equation}
\label{aveq}
v(z) = \psi(h(z)), \qfor z \in U.
\end{equation}
If $v$ is harmonic, then we can differentiate (\ref{aveq}) to obtain that $\psi''(h(z)) = 0,$ for $z \in U$. Hence $v$ is a linear function of $h$ in $U$. Now, $v$ is finite in $\overline{U}$. In \cite[Example 2 and Theorem 1.6]{2011arXiv1109.1794B} it is shown that there exist {\tef}s such that $h$ is unbounded in $U$. For these functions the relationship between $h$ and $v$ cannot, therefore, be linear, and so $v$ is not harmonic in $U$.
\end{remarkk}
In order to prove Theorem~\ref{TRAsubh} we need three further lemmas. The first concerns repeated iteration of the function $M^{-1}$.
\begin{lemma}
\label{lemwascor1}
Suppose that $f$ is a {\tef} and that $f(0) = 0$. For each $n\isnatural$, define the function $v_n$ by
\begin{equation}
v_n(z) = -\log M^{-n}(|f^n(z)|), \qfor z \in D_n = \{z : f^n(z) \ne 0 \}.
\end{equation}
Then $v_n$ is subharmonic in $D_n$.
\end{lemma}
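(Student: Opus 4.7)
The plan is to express $v_n$ as a convex function of a harmonic function and apply the classical fact that a convex function of a harmonic function is subharmonic. Specifically, define
\[
\phi_n(s) = -\log M^{-n}(e^s), \qquad s \in \mathbb{R},
\]
which is well-defined on all of $\mathbb{R}$ since $f(0)=0$ ensures $M^{-1}$ (and hence $M^{-n}$) is defined on $[0,\infty)$. Then, setting $u(z) = \log|f^n(z)|$, we have
\[
v_n(z) = \phi_n(u(z)), \qquad z \in D_n.
\]

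The first step is to establish that $\phi_n$ is convex on $\mathbb{R}$. It is noted in Section~\ref{Sbackground} that $\psi(s) := \log M^{-1}(e^s)$ is concave and increasing in $s$. A standard verification shows that the composition of two concave increasing real functions is itself concave and increasing: if $f,g$ are concave and increasing and $\lambda \in [0,1]$, then
\[
f(g(\lambda x + (1-\lambda)y)) \geq f(\lambda g(x) + (1-\lambda) g(y)) \geq \lambda f(g(x)) + (1-\lambda) f(g(y)),
\]
using concavity of $g$ together with monotonicity of $f$ for the first inequality, and concavity of $f$ for the second. By induction this gives that $\log M^{-n}(e^s) = \psi^{\circ n}(s)$ is concave (and increasing) in $s$, whence $\phi_n$ is convex (and decreasing) on $\mathbb{R}$.

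The second step is to note that $u(z) = \log|f^n(z)|$ is harmonic on $D_n$, since $f^n$ is holomorphic and nonvanishing there. The third step invokes the standard result that if $u$ is harmonic on a domain $\Omega$ and $\phi : \mathbb{R} \to \mathbb{R}$ is convex, then $\phi \circ u$ is subharmonic on $\Omega$; this follows immediately from Jensen's inequality applied to the submean property of $u$. Taking $\Omega = D_n$, $\phi = \phi_n$, and $u$ as above yields that $v_n = \phi_n \circ u$ is subharmonic on $D_n$, as required.

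The only non-routine step is confirming convexity of $\phi_n$, which reduces to the elementary fact about composing concave increasing functions; since $M$ is merely continuous and increasing (not necessarily $C^2$), one should phrase convexity in the defining secant-inequality form rather than by differentiation, but this causes no real difficulty.
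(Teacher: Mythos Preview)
Your proof is correct and follows essentially the same route as the paper: write $v_n$ as $-\psi^{\circ n}(\log|f^n(z)|)$ with $\psi(s)=\log M^{-1}(e^s)$ concave and increasing, deduce that $\psi^{\circ n}$ is concave (the paper cites a reference, you supply the one-line inductive argument), and then apply the standard fact that a convex function of a harmonic function is subharmonic. The only difference is that you spell out the composition-of-concave-increasing-functions step and the Jensen-inequality justification rather than citing them.
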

\begin{proof}
Since $\psi(s) = \log M^{-1}(e^s)$ is a concave and increasing function of $s$, we have (see, for example, \cite[Theorem 7.2.1]{MR2467621}) that $$\psi^n(s) = \log M^{-n}(e^s)$$ is also a concave function of $s$, for $n\isnatural$. Now, for each $n\isnatural$, $\log |f^n(z)|$ is a harmonic function of $z$ in $D_n$, since $f^n(z) \ne 0$ in $D_n$. The result follows since $$v_n(z) = -\log M^{-n}(\exp(\log |f^n(z)|)) = -\psi^n(\log |f^n(z)|),$$ is a convex function of a harmonic function; see e.g. \cite[p.47]{MR1334766}. 
\end{proof}
Note that, if $f(0) = 0$, then $0 \notin A(f)$ and so $v_n(z)$ is defined for all $z \in A(f)$ and $n\isnatural$.

The second lemma gives an alternative characterisation of the function $R_A$ in $A(f)$.
\begin{lemma}
\label{lem1}
Suppose that $f$ is a {\tef} and that $f(0) = 0$. Then, for each $z \in A(f)$, $(M^{-n}(|f^n(z)|))_{n\isnatural}$ is a non-increasing sequence, with limit $R_A(z)$.
\end{lemma}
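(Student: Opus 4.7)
The plan is to prove monotonicity and identify the limit separately, both by short direct arguments using the definition of $M$ and of $R_A$.

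\textbf{Monotonicity.} Set $a_n = M^{-n}(|f^n(z)|)$. Since $f(0) = 0$, the inverse $M^{-1}$ is defined on $[0,\infty)$ and is strictly increasing, so $M^{-(n+1)}$ is well-defined and strictly increasing. From the definition of the maximum modulus, I would use the pointwise bound
\[
|f^{n+1}(z)| = |f(f^n(z))| \leq M(|f^n(z)|), \qfor n\isnatural.
\]
Applying $M^{-(n+1)}$ to both sides and simplifying the right-hand side using $M^{-(n+1)}\circ M = M^{-n}$, this gives $a_{n+1} \leq a_n$, so $(a_n)$ is non-increasing.

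\textbf{Lower bound by $R_A(z)$.} Since $z \in A(f)$, Theorem~\ref{Tnewdef} shows that $z \in A_R(f)$ for some $R > R_f$, and the existence of the maximum in the definition (\ref{fullRA}) gives $z \in A_{R_A(z)}(f)$. Hence $|f^n(z)| \geq M^n(R_A(z))$ for each $n\isnatural$, and applying $M^{-n}$ yields $a_n \geq R_A(z)$.

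\textbf{Identifying the limit.} The sequence $(a_n)$ is non-increasing and bounded below by $R_A(z)$, so it has a limit $\ell \geq R_A(z)$. Suppose for contradiction that $\ell > R_A(z)$. Then $a_n \geq \ell$ for every $n\isnatural$, i.e. $|f^n(z)| \geq M^n(\ell)$ for every $n$, so $z \in A_\ell(f)$. But this contradicts the maximality of $R_A(z)$ in the definition (\ref{fullRA}). Hence $\ell = R_A(z)$.

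The only subtle point is making sure the iterates $M^{-n}$ are well-defined and strictly increasing on the relevant range — this is exactly where the normalisation $f(0) = 0$ is used, since it forces $M(0) = 0$ and hence $M^{-1}$ to be defined on all of $[0,\infty)$. After that the argument is essentially a one-line inequality iterated, and the lower-bound-plus-maximality argument; no further obstacle is anticipated.
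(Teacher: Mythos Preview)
Your proof is correct and follows essentially the same route as the paper: monotonicity from $|f^{n+1}(z)|\le M(|f^n(z)|)$, the lower bound from $z\in A_{R_A(z)}(f)$, and the limit identification via maximality of $R_A(z)$. The paper merely says the last step is ``straightforward'' where you spell it out, and your remark on why $f(0)=0$ is needed is a helpful addition; otherwise the arguments are identical.
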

\begin{proof}
Suppose that $z \in A(f)$. Since $M(|f^n(z)|) \geq |f^{n+1}(z)|$ we have that 
\begin{equation}
\label{mudecr}
M^{-n}(|f^n(z)|) \geq M^{-(n+1)}(|f^{n+1}(z)|), \qfor n\isnatural.
\end{equation}
Hence the sequence $(M^{-n}(|f^n(z)|))_{n\isnatural}$ is non-increasing. In addition, since \\
$|f^n(z)|~\geq~M^n(R_A(z))$, for $n\isnatural$, we have that $$M^{-n}(|f^n(z)|) \geq R_A(z), \qfor n\isnatural.$$ So $\lim_{n\rightarrow\infty} M^{-n}(|f^n(z)|)$ exists and is at least $R_A(z)$. It is straightforward to show from (\ref{mudecr}) that if this limit is $R$, then $z \in A_R(f)$. This completes the proof.
\end{proof}
We also need a result on subharmonic functions. Suppose that $D$ is a domain, and $u: D \to [-\infty, \infty)$ is a function which is locally bounded above in $D$. The \itshape {\usc} regularization \normalfont of $u$, $u^*: D \to [-\infty, \infty)$ is defined by $$u^*(z) = \limsup_{w\rightarrow z} u(w).$$ It can be shown that $u^*$ is the least {\usc}  function on $D$ such that $u^* \geq u$. The result we require is the following \cite[Theorem 3.4.2(a)]{MR1334766}.
\begin{lemma}[Brelot-Cartan Theorem]
\label{Lbc}
Suppose that $D$ is a domain, that $\mathcal{V}$ is a family of subharmonic functions on $D$ and that $u = \sup_{v\in\mathcal{V}} v$ is locally bounded above on $D$. Then $u^*$ is subharmonic on $D$.
\end{lemma}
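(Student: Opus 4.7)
The plan is to verify the three defining conditions for $u^*$ to be subharmonic on $D$: upper semicontinuity, not being identically $-\infty$ on any component of $D$, and the local submean inequality over small disks centred at each point. Upper semicontinuity is built into the definition $u^*(z) = \limsup_{w\to z} u(w)$ and the fact that $u^*$ is the least {\usc} majorant of $u$. Local boundedness above of $u^*$ is inherited directly from $u$, which in particular prevents $u^*$ from taking the value $+\infty$ and guarantees it is Borel measurable. Since each $v\in\mathcal{V}$ satisfies $v\leq u\leq u^*$ and $u^*$ is locally bounded, $u^*\not\equiv -\infty$ on any component of $D$.

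The main content is the submean inequality. I would fix $z_0\in D$ and $r>0$ with $\overline{B(z_0,r)}\subset D$. For every $v\in\mathcal{V}$, subharmonicity and the bound $v\leq u^*$ give
\[
v(z_0) \leq \frac{1}{2\pi}\int_0^{2\pi} v(z_0+re^{i\theta})\,d\theta \leq \frac{1}{2\pi}\int_0^{2\pi} u^*(z_0+re^{i\theta})\,d\theta.
\]
Taking the supremum over $v\in\mathcal{V}$ yields the corresponding inequality with $u(z_0)$ in place of $v(z_0)$. This is already the submean inequality for $u$ itself, integrated against $u^*$ on the circle.

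To promote this to the submean inequality for $u^*$ at $z_0$, I would choose a sequence $w_k\to z_0$ with $u(w_k)\to u^*(z_0)$. For $k$ large, $\overline{B(w_k,r)}\subset D$ and the previous step applies, giving
\[
u(w_k) \leq \frac{1}{2\pi}\int_0^{2\pi} u^*(w_k+re^{i\theta})\,d\theta.
\]
The left-hand side tends to $u^*(z_0)$, so it suffices to establish
\[
\limsup_{k\to\infty} \frac{1}{2\pi}\int_0^{2\pi} u^*(w_k+re^{i\theta})\,d\theta \leq \frac{1}{2\pi}\int_0^{2\pi} u^*(z_0+re^{i\theta})\,d\theta.
\]
This is the key technical step. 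I would combine the reverse form of Fatou's lemma with the pointwise bound $\limsup_k u^*(w_k+re^{i\theta})\leq u^*(z_0+re^{i\theta})$, which is exactly the upper semicontinuity of $u^*$ along the circle.

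The main obstacle is justifying this interchange of $\limsup$ and integral. Two points need care: measurability of $u^*$ on the circle, which is automatic because {\usc} functions are Borel measurable; and a uniform upper bound for the integrands $u^*(w_k+re^{i\theta})$, needed to invoke reverse Fatou. The second is handled by shrinking $r$ if necessary so that a slightly larger closed disk is contained in $D$, on which $u^*$ is bounded above by hypothesis, with the uniform bound then applying to all $w_k$ sufficiently close to $z_0$. Once the submean inequality is established for $u^*$ at every $z_0\in D$ and all sufficiently small $r$, the standard characterisation of subharmonic functions yields that $u^*$ is subharmonic on $D$.
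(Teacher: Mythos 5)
Your proof is correct, but note that the paper does not prove this lemma at all: it is quoted verbatim as the Brelot--Cartan theorem from Ransford's book \cite[Theorem 3.4.2(a)]{MR1334766} and used as a black box. What you have written is a self-contained proof of that cited result, and it is the standard one: establish the circle submean inequality $u(w)\leq\frac{1}{2\pi}\int_0^{2\pi}u^*(w+re^{i\theta})\,d\theta$ by taking suprema over $\mathcal{V}$, then pass from $u$ to $u^*$ at $z_0$ along a sequence $w_k\to z_0$ realising the $\limsup$, using upper semicontinuity of $u^*$ on the circle together with reverse Fatou, the latter being legitimate because the circles eventually lie in a fixed compact subset of $D$ on which $u^*$ is bounded above (this bound does pass from $u$ to $u^*$, as you say). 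Combined with the fact that the regularisation of a locally bounded above function is upper semicontinuous and finite-valued, this gives subharmonicity of $u^*$ in the sense used in \cite{MR1334766}.

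One small point to tidy: your justification that $u^*\not\equiv-\infty$ on a component, ``since $v\leq u\leq u^*$ and $u^*$ is locally bounded,'' does not work as stated -- local boundedness \emph{above} says nothing about the value $-\infty$. Either adopt Ransford's convention, under which the function identically $-\infty$ is admitted as subharmonic and no such condition is needed (this is the convention behind the cited statement, and also covers the degenerate case $\mathcal{V}=\emptyset$), or argue instead that any single $v\in\mathcal{V}$, being subharmonic and not identically $-\infty$ on a component, is finite on a dense subset of that component and minorises $u^*$ there. With that repaired, the argument is complete.
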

We now give the proof of Theorem~\ref{TRAsubh}, that the function $v(z) = -\log R_A(z)$ is subharmonic, for $z \in F(f)$. 
\begin{proof}[Proof of Theorem~\ref{TRAsubh}]
Suppose that $z \in A(f)^c \cap F(f)$. The result follows because $R_A$ is constant in a {\nhd} of $z$. On the other hand, suppose that we have $z \in A(f) \cap F(f)$, and let $U$ be the Fatou component containing $z$. Since $R_A$ is constant in any simply connected Fatou component, we can assume that $U$ is multiply connected. Observe that, by Lemma~\ref{LRAniceinmconn}, applied, if necessary, to $U_N$ for some large $N$, there exists $R_1 > 0$ such that $R_A(z) \geq R_1$, for $z\in\overline{U}$. Hence $v$ is bounded above in $\overline{U}$. 

Let $v_n$ be as defined in Lemma~\ref{lemwascor1}. Then, by Lemma~\ref{lem1} and Lemma~\ref{lemwascor1}, $v_n$ is a non-decreasing sequence of subharmonic functions, converging pointwise in $\overline{U}$ to $v$. Hence, $\sup_{n\isnatural} v_n = v$. By Lemma~\ref{Lbc}, applied with $\mathcal{V} = \{v_n\ : n\isnatural\}$, $v^*$ is subharmonic in $U$. By Theorem~\ref{TRAcont} part (d), $v$ is continuous in $U$, and so $v^* = v$ there. This completes the proof.
\end{proof}
Another advantage of the normalisation $f(0) = 0$ is that, if this condition is satisfied, then the conclusions of Theorems~\ref{Tloops} and~\ref{Tloopsandh} hold for any multiply connected Fatou component which surrounds the origin, without the additional restriction of being a sufficient distance from the origin. This fact follows from the proof of Theorem~\ref{Tloops} and from the following version of Lemma~\ref{Lnhd}. \begin{lemma}
\label{Lnotnhd}
Suppose that $f$ is {\tef} and that $f(0) = 0$. Suppose that $U$ is a multiply connected Fatou component of $f$ which surrounds the origin, and define $G_n$ as the complementary component of $\overline{U_n}$ which contains the origin, for $n=0,1,2,\cdots$. Then
\begin{enumerate}[(a)]
\item $\overline{G_n} \subset G_{n+1}$,  for $n=0,1,2,\cdots$;
\item $f(\partial G_n) = \partial G_{n+1}$, for $n=0,1,2,\cdots$;
\item for all $z \in \overline{U}$ there exists $R = R(z)$ such that $z \in A_R(f)$.
\end{enumerate}
\end{lemma}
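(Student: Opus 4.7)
The plan is to adapt the proof of Lemma~\ref{Lnhd} by exploiting the normalization $f(0) = 0$ to make the origin itself the fixed point that $U$ surrounds, thereby dispensing with the auxiliary multiply connected Fatou component $V$ and the distance restriction $\mathrm{dist}(0,U) > R'$. Since $U$ surrounds $0$ and $f(0) = 0$, the argument principle gives inductively that $U_n$ surrounds $0$ for every $n \geq 0$, so $0 \in G_n$ throughout. Part (c) is then immediate: by Lemma~\ref{Lbaker} (and the standard consequence $\overline{U} \subset A(f)$ used elsewhere in the paper), each $z \in \overline{U}$ lies in $A_R(f)$ with $R = R_A(z)$. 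Part (b) follows from the same argument used in Lemma~\ref{Lnhd}: $f(G_n)$ is open and connected with boundary in $\partial U_{n+1} \subset J(f)$, it contains $f(0) = 0 \in G_{n+1}$, and properness of $f : U_n \to U_{n+1}$ gives $f(\partial G_n) \subset \partial U_{n+1}$, which together yield $f(G_n) = G_{n+1}$ and hence $f(\partial G_n) = \partial G_{n+1}$.

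For part (a), I plan to argue by a topological dichotomy combined with Lemma~\ref{Lbaker}. Since $U_{n+1}$ is connected and disjoint from $\overline{U_n}$ (because $\partial U_n \subset J(f)$ while $U_{n+1} \subset F(f)$), it lies in a single component of $\mathbb{C} \setminus \overline{U_n}$. The bounded components of $\mathbb{C} \setminus \overline{U_n}$ are simply connected (as $\overline{U_n}$ is compact and connected), and only $G_n$ contains $0$; a Jordan curve lying in any other bounded component cannot surround $0$. Because $U_{n+1}$ surrounds $0$, it must therefore lie either in the unbounded component of $\mathbb{C} \setminus \overline{U_n}$ (the ``good'' case, which gives $\overline{G_n} \subset G_{n+1}$) or in $G_n$ itself (the ``bad'' case, $U_{n+1} \subset G_n$). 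If the bad case held at some $n \geq 0$, then by part (b), $U_{n+2} = f(U_{n+1}) \subset f(G_n) = G_{n+1}$, so the bad case also holds at step $n+1$; inductively, $U_{n+k} \subset G_{n+k-1} \subset \cdots \subset G_n$ for every $k \geq 1$. Since $G_n$ is bounded, this contradicts $U_m \to \infty$ from Lemma~\ref{Lbaker}, so the good case holds at every $n$, giving (a).

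The main obstacle I anticipate is making the topological dichotomy fully rigorous, in particular verifying in the bad case that $\overline{U_{n+1}} \subset G_n$ (not merely $U_{n+1} \subset G_n$), so that $G_{n+1} \subsetneq G_n$ and the nested containment $G_{n+k-1} \subset G_n$ truly propagates; this requires handling the possibility that $\partial U_n$ and $\partial U_{n+1}$ share points of $J(f)$. The key facts I expect to use are the simple connectivity of the bounded complementary components of the connected set $\overline{U_n}$, together with the properness of $f:U_n \to U_{n+1}$, which should be enough to close the argument.
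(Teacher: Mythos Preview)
Your treatment of parts (b) and (c) matches the paper: for (c) the paper invokes Theorem~\ref{Tnewdef} exactly as you do (via $\overline{U}\subset A(f)$), and for (b) the paper simply refers back to the argument in Lemma~\ref{Lnhd} with $\alpha=0$, which is what you sketch.

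For part (a) your route diverges from the paper's. The paper again just says ``as in Lemma~\ref{Lnhd}, since the origin is a fixed point,'' i.e.\ it re-runs the argument-principle/maximum-principle step of that proof with $\alpha=0$. Your dichotomy-plus-propagation idea is reasonable in spirit, but the propagation step has a gap that is \emph{not} the one you anticipate. You write $U_{n+2}=f(U_{n+1})\subset f(G_n)=G_{n+1}$, relying on the equality $f(G_n)=G_{n+1}$ from your part (b) sketch. But from $\partial f(G_n)\subset f(\partial G_n)\subset\partial U_{n+1}$ together with $0\in f(G_n)\cap G_{n+1}$ you only get $G_{n+1}\subset f(G_n)$; equality need not hold. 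Indeed, if $G_n$ contains another Fatou preimage component of $U_{n+1}$ (nothing rules this out), then $U_{n+1}\subset f(G_n)$ as well, so $f(G_n)\supsetneq G_{n+1}$. Thus ``bad at $n\Rightarrow$ bad at $n+1$'' is not established. (The equality $f(G_n)=G_{n+1}$ \emph{does} hold once (a) is known, because then $(f,G_n,G_{n+1})$ is polynomial-like and $f:G_n\to G_{n+1}$ is proper --- but that is circular here.)

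The clean way to close the bad case, and essentially what the paper's reference to the ``maximum principle'' in Lemma~\ref{Lnhd} is doing, is this: if $U_{n+1}\subset G_n$ then $f(\partial G_n)\subset\partial U_{n+1}\subset\overline{G_n}$, so the topological hull of $f(\partial G_n)$ lies in the (simply connected) hull of $\overline{G_n}$; since $\partial f(G_n)\subset f(\partial G_n)$ and $f(G_n)$ is bounded, it follows that $f(\overline{G_n})$ is contained in that hull. Iterating gives $\overline{U_{n+k}}=f^{k-1}(\overline{U_{n+1}})$ trapped in a fixed bounded set for all $k\ge 1$, contradicting $U_m\to\infty$ from Lemma~\ref{Lbaker}. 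This replaces your $f(G_n)=G_{n+1}$ step and avoids the boundary-touching issue you flagged.
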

\begin{proof}
Parts (a) and (b) follow as in the proof of Lemma~\ref{Lnhd}, since the origin is a fixed point of $f$. Part (c) follows from Theorem~\ref{Tnewdef}.
\end{proof}
%
%
Given a {\tef} $f$ and $N\isnatural$, it is not hard to show that there is a point $z\in A(f)$ such that $|f^{n+1}(z)|$ is small compared to $|f^n(z)|$, for $n\leq N$. Hence $R_A(z)$ can be much smaller than $|z|$. It does seem reasonable, however, to expect that $M^n(R_A(z))$ should be comparable to $|f^n(z)|$, for large values of $n\isnatural$. We use results from \cite{2011arXiv1109.1794B} to prove the following.
\begin{theo}
\label{limittheo}
Suppose that $f$ is a {\tef}, that $f(0) = 0$, and that $z$ is in a multiply connected Fatou component of $f$. Then
\begin{equation}
\lim_{n\to\infty} \frac{\log |f^n(z)|}{\log M^n(R_A(z))} = 1.
\end{equation}
\end{theo}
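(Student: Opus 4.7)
The plan is to bound $\log|f^n(z)|/\log M^n(R)$ above and below by quantities tending to $1$, where $R = R_A(z)$. Since $z$ lies in a multiply connected Fatou component we have $z \in A(f)$ by Lemma~\ref{Lbaker}, so $R > R_f$ and in particular $M^n(R) \to \infty$, making the denominator positive for large $n$. The lower bound comes from the defining inequality $|f^n(z)| \geq M^n(R)$ of $A_R(f)$, which immediately gives
$$\frac{\log|f^n(z)|}{\log M^n(R)} \geq 1, \qfor n \isnatural.$$

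For the matching upper bound I would fix an arbitrary $c \in (0,1)$ and invoke Lemma~\ref{mconnlemma} to produce an $N\isnatural$ with
$$|f^{n+m}(z)| \geq M^m(|f^n(z)|^c), \qfor n \geq N, \ m \isnatural.$$
Because $f(0) = 0$, the iterated inverse $M^{-k}$ is defined on all of $[0,\infty)$ and is strictly increasing. Applying $M^{-(n+m)}$ to both sides yields
$$M^{-(n+m)}(|f^{n+m}(z)|) \geq M^{-n}(|f^n(z)|^c), \qfor n \geq N, \ m \isnatural.$$
The right-hand side is independent of $m$, while by Lemma~\ref{lem1} the left-hand side is a non-increasing sequence in $n+m$ converging to $R_A(z) = R$. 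Letting $m \to \infty$ with $n$ fixed therefore gives $R \geq M^{-n}(|f^n(z)|^c)$, equivalently $|f^n(z)|^c \leq M^n(R)$ for every $n \geq N$. Taking logarithms,
$$\frac{\log|f^n(z)|}{\log M^n(R)} \leq \frac{1}{c}, \qfor n \geq N.$$

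Combining the two bounds, for every $c \in (0,1)$,
$$1 \leq \liminf_{n\to\infty} \frac{\log|f^n(z)|}{\log M^n(R)} \leq \limsup_{n\to\infty} \frac{\log|f^n(z)|}{\log M^n(R)} \leq \frac{1}{c},$$
and letting $c \to 1^-$ delivers the theorem. I do not anticipate a substantial obstacle: Lemma~\ref{mconnlemma} supplies exactly the lower bound on $|f^{n+m}(z)|$ needed to drive this style of argument, and Lemma~\ref{lem1} (which is precisely the place where the normalisation $f(0)=0$ is used) converts that lower bound into an upper bound on $|f^n(z)|$ via inversion of $M^n$. The only point requiring a little care is the passage to the limit in $m$, but since the right-hand side of the inequality is constant in $m$ this is not really an interchange at all, and the monotonicity from Lemma~\ref{lem1} makes the limit of the left-hand side exactly $R$.
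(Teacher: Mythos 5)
Your proposal is correct and takes essentially the same approach as the paper: both get the lower bound from the defining inequality of $A_{R_A(z)}(f)$ and the upper bound from Lemma~\ref{mconnlemma}. The paper merely packages the upper bound as a contradiction argument, passing through the relation $M^n(R_A(z)) = R_A(f^n(z))$ of (\ref{MinvertsRA}), whereas you argue directly via Lemma~\ref{lem1} and the inverse iterates of $M$; the underlying estimate is identical.
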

\begin{proof}
Let $U$ be the Fatou component containing $z$. It follow from (\ref{MinvertsRA}) that we need to prove that 
\begin{equation}
\label{toprove}
\lim_{n\to\infty} \frac{\log |f^n(z)|}{\log R_A(f^n(z))} = 1.
\end{equation} 
By definition $|f^n(z)| \geq R_A(f^n(z))$, for $n\isnatural$. Suppose that, contrary to (\ref{toprove}), there exists $0 < c < 1$ and a sequence of natural numbers $(n_k)_{k\isnatural}$ such that $|f^{n_k}(z)|^c > R_A(f^{n_k}(z))$, for $k\isnatural$, and $n_k\rightarrow\infty$ as $k\rightarrow\infty$. Then, by the definition of $R_A$, for each $k\isnatural$ there exists $m_k\isnatural$ such that
\begin{equation}
\label{mkeq}
|f^{n_k+m}(z)| < M^m(|f^{n_k}(z)|^c), \qfor m\geq m_k.
\end{equation}
Since $n_k\rightarrow\infty$ as $k\rightarrow\infty$, we see that (\ref{mkeq}) is contrary to Lemma~\ref{mconnlemma}. This completes the proof of Theorem~\ref{limittheo}. 
\end{proof}
\setcounter{remarkk}{0}
\begin{remarkk}
\normalfont If $f$ is a {\tef} and $f(0) \ne 0$, then the conclusions of Theorem~\ref{TRAsubh} and Theorem~\ref{limittheo} still hold for a multiply connected Fatou component $U$ which satisfies the conditions of Theorem~\ref{Tloops}. This is readily seen from a review of the proofs of these results.
\end{remarkk}
\begin{remarkk}
\normalfont The only known examples of simply connected fast escaping Fatou components are given in \cite{MRunknown} and \cite{MR2959920}. It can be shown that, for the example in \cite{MR2959920}, if $z$ is in one of the simply connected fast escaping Fatou components then we have the stronger result that $$\lim_{n\to\infty} \frac{|f^n(z)|}{M^n(R_A(z))} = 1.$$ It would be interesting to know, in general, whether a result similar to Theorem~\ref{limittheo} holds for simply connected fast escaping Fatou components.
\end{remarkk}
%
%
\comments{
%
%
It is now quite straightforward to show the following. WHICH IS A COROLLARY OF WHAT?
\begin{corollary}
\label{Cor1}
Suppose that $f$ is a {\tef} with $f(0) = 0$, that $z \in A(f)$ and that $R_A$ is not constant in any {\nhd} of $z$. Then $R_A$ achieves neither a strict maximum nor a strict minimum at $z$.
\end{corollary}
\begin{proof}
The result for a strict maximum follows because $A_R(f)$ has no bounded components \cite[Theorem 1.1]{Rippon01102012} for any $R \geq R_A(z)$. For the minimum there are two cases to consider. If $z\in F(f)$, then the result follows because $-\log R_A$ is subharmonic and non-constant in a {\nhd} of $z$. If $z \in J(f)$, then the result follows from Theorem~\ref{TRAcont} parts $(a)$ and $(b)$.  
\end{proof}
%
\begin{remark}
\normalfont In view of Corollary~\ref{Cor1}, it seems natural to ask whether $v$ is harmonic in $F(f)$. This cannot be the case in general. For, by Lemma~\ref{Llimitedcontandlevel}, if $v$ is harmonic in a multiply connected Fatou component $U$ which satisfies the conditions of Theorem~\ref{Tloops}, then $v$ must be a linear function of $h$ in $U$. Now, $v$ is finite in $\overline{U}$. In \cite{2011arXiv1109.1794B} it is shown that there exist {\tef}s such that $h(\zeta)\rightarrow\infty$ as $\zeta$ tends to $\outb U$. In this case the relationship cannot be linear, and so $v$ is not harmonic in $U$.
\end{remark}
%
%
The relationship between $h$ and $R_A$ may be closely related to the growth of $M(r)$ for values of $r$ close to $R_f$, and so it seems unlikely that, in general, there is a simple form for the function $\phi$ defined in (\ref{phidef}). It is fairly straightforward, however, to obtain bounds on the relationship between $R_A$ and $h$. In the following result we consider only the case when $h(z) > 1$. The case when $h(z) < 1$ is similar, and it follows from Lemma~\ref{Llimitedcontandlevel} that $R_A(z_0) = R_A(z)$ when $h(z) = 1$.
\begin{lemma}
\label{Lcomph}
Suppose that $f$ is a {\tef} with $f(0) = 0$, $U$ is a multiply connected Fatou component of $f$, $z_0\in U$, and $h$ is defined as in (\ref{hdefinU}). Let $R_0$ be the constant from Lemma~\ref{LMtheo}. Suppose also that $h(z) > 1$ and that $R_A(z_0) > R_0$. Then
\begin{equation}
\label{RAeq}
\log R_A(z_0) < \log R_A(z) \leq h(z) \log R_A(z_0).
\end{equation}
\end{lemma}
\begin{proof}
Since $h(z) > 1$ we have that $|f^n(z_0)| < |f^n(z)|$, for large values of $n\isnatural$. Hence $\log R_A(z_0) \leq \log R_A(z)$, and equality is impossible by the last statement of Lemma~\ref{Llimitedcontandlevel}.

To prove the right-hand inequality, we proceed as follows. For each $n\isnatural$, set $$\alpha_n = \frac{\log |f^n(z)|}{\log |f^n(z_0)|},$$ and so $\alpha_n\rightarrow h(z)$ as $n\rightarrow\infty$. Since $h(z)>1$, there is an $N\isnatural$ such that $\alpha_n > 1$, for $n \geq N$. Hence, for $n \geq N$, by repeated application of (\ref{mu1}),
\begin{align}
\label{e2}
\log M^{-n}(|f^n(z)|) =    \log M^{-n}(|f^n(z_0)|^{\alpha_n}) 
                        \leq \log M^{-n}(|f^n(z_0)|)^{\alpha_n} 
                        =    {\alpha_n} \log M^{-n}(|f^n(z_0)|).
\end{align}
Observe that the smallest term to which we apply (\ref{mu1}) has $r$ replaced by $$M^{-(n-1)}(|f^n(z_0)|) =  M(M^{-n}(|f^n(z_0)|)) \geq M(R_A(z_0)).$$ This explains the condition $R_A(z_0) > R_0$ in the statement of the lemma. The result follows by letting $n\rightarrow\infty$ in (\ref{e2}), and by Lemma~\ref{lem1}.
\end{proof}
The following simple result shows that, in a sense, if we replace $z$ and $z_0$ in (\ref{RAeq}) with $f^n(z)$ and $f^n(z_0)$, and take the limit as $n\rightarrow\infty$, then the central term tends to its upper bound.
\begin{lemma}
\label{Llim}
Suppose that $f$ is a {\tef} with $f(0) = 0$, $U$ is a multiply connected Fatou component of $f$, $z_0\in U$, and $h$ is defined as in (\ref{hdefinU}). Then
\begin{equation}
\log R_A (f^n(z)) \sim h(z) \log R_A (f^n(z_0)), \ \text{as } n\rightarrow\infty.
\end{equation}
\end{lemma}
\begin{proof}
By Theorem~\ref{limittheo} we have
\begin{align*}
1 &= \lim_{n\to\infty} \frac{\log |f^n(z_0)|}{\log R_A(f^n(z_0))} \lim_{n\to\infty}  \frac{\log R_A(f^n(z))}{\log |f^n(z)|} \\
  &=  \lim_{n\to\infty} \frac{\log |f^n(z_0)|}{\log |f^n(z)|}  \lim_{n\to\infty} \frac{\log R_A(f^n(z))} {\log R_A(f^n(z_0))} \\
  &=  1/h(z) \lim_{n\to\infty} \frac{\log R_A(f^n(z))} {\log R_A(f^n(z_0))},
\end{align*}
and the result follows.
\end{proof}
}
%
%
\itshape Acknowledgment: \normalfont
The author is grateful to Gwyneth Stallard and Phil Rippon for all their help with this paper.
%
%
\bibliographystyle{acm}
\bibliography{Main}
\end{document}